\DeclareSymbolFont{symbols2}{LS1}{stixfrak} {m} {n}
\DeclareMathSymbol{\operp}{\mathbin}{symbols2}{"A8}
\setlist[description]{
	font={\rmfamily},
}
\setlist{itemsep=2pt}
\setlist{parsep=3pt}
\newcommand{\F}{\mathbb{F}}
\newcommand{\R}{\mathbb{R}}
\newcommand{\N}{\mathbb{N}}
\newcommand{\Z}{\mathbb{Z}}
\newcommand{\Q}{\mathbb{Q}}
\newcommand{\vek}[1]{\boldsymbol{#1}}
\newcommand{\qbinom}[3]{\genfrac{[}{]}{0pt}{}{#1}{#2}_{#3}}
\newcommand{\qbinomS}[2]{\genfrac{[}{]}{0pt}{}{#1}{#2}}
\newcommand{\qnumb}[2]{[#1]_{#2}}
\newcommand{\qnumbS}[1]{[#1]}
\newcommand{\powerset}{\mathfrak{P}}
\DeclareMathOperator{\PGammaL}{P\Gamma L}
\DeclareMathOperator{\supp}{supp}
\DeclareMathOperator{\Aut}{Aut}
\DeclareMathOperator{\im}{im}
\DeclareMathOperator{\PG}{PG}
\DeclareMathOperator{\rowsp}{rowsp}
\DeclareMathOperator{\rk}{rk}
\DeclareMathOperator{\cork}{cork}
\DeclareMathOperator{\wt}{wt}
\DeclareMathOperator{\Der}{Der}
\DeclareMathOperator{\Res}{Res}
\theoremstyle{plain}
\newtheorem{theorem}{Theorem}[section]
\newtheorem{lemma}[theorem]{Lemma}
\newtheorem{fact}[theorem]{Fact}
\newtheorem{corollary}[theorem]{Corollary}
\newtheorem{definition}[theorem]{Definition}
\newtheorem{remark}[theorem]{Remark}
\newtheorem{example}[theorem]{Example}
\theoremstyle{nonumberplain}
\newtheorem{proof}{Proof}
\begin{document}
\title{The degree of functions\\in the Johnson and $q$-Johnson schemes}
\author{
Michael Kiermaier%
\thanks{
Universität Bayreuth, Institute for Mathematics, 95440 Bayreuth, Germany
\newline
email:~\texttt{michael.kiermaier@uni-bayreuth.de}
\newline
homepage:~\url{https://mathe2.uni-bayreuth.de/michaelk/}
}
\and
Jonathan Mannaert%
\thanks{
Vrije Universiteit Brussel, Department of Mathematics and Data Science, Pleinlaan 2, B--1050 Brussels,
Belgium
\newline
email:~\texttt{Jonathan.Mannaert@vub.be}
}
\and
Alfred Wassermann%
\thanks{
Universität Bayreuth, Institute for Mathematics, 95440 Bayreuth, Germany
\newline
email:~\texttt{alfred.wassermann@uni-bayreuth.de}
}
}
\maketitle

\begin{abstract}
In 1982, Cameron and Liebler investigated certain \emph{special sets of lines} in $\PG(3,q)$, and gave several equivalent characterizations.
Due to their interesting geometric and algebraic properties, these \emph{Cameron-Liebler line classes} got much attention.
Several generalizations and variants have been considered in the literature, the main directions being a variation of the dimensions of the involved spaces, and studying the analogous situation in the subset lattice.
An important tool is the interpretation of the objects as Boolean functions in the \emph{Johnson} and \emph{$q$-Johnson schemes}.

In this article, we develop a unified theory covering all these variations.
Generalized versions of algebraic and geometric properties will be investigated, having a parallel in the notion of \emph{designs} and \emph{antidesigns} in association schemes, which is connected to Delsarte's concept of \emph{design-orthogonality}.
This leads to a natural definition of the \emph{degree} and the \emph{weights} of functions in the ambient scheme, refining the existing definitions.
We will study the effect of dualization and of elementary modifications of the ambient space on the degree and the weights.
Moreover, a divisibility property of the sizes of Boolean functions of degree $t$ will be proven.
\end{abstract}

\section{Introduction}
In~\cite{Cameron-Liebler-1982-LAA46:91-102}, Cameron and Liebler worked on the classification of irreducible collineation subgroups of $\PG(3,q)$, having equally many point and line orbits.
In order to do so, they showed that the line orbits admit certain equivalent properties, earning them the name \emph{special line class} $\mathcal{L}$.
An algebraic property says that the characteristic function of $\mathcal{L}$ lies in the row space of the point-line incidence matrix of $\PG(3,q)$.
Surprisingly, this is equivalent to the following property more of a geometric nature, stating that $\mathcal{L}$ has constant intersection size $x$ with any line spread, i.e. with any partition of the point set into a set of lines.
The value $x$ is known as the \emph{parameter} of the special line class and can also be obtained from $\
\#\mathcal{L} = x(q^2+q+1)$.

Cameron and Liebler conjectured that all irreducible collineation groups of $\PG(3,q)$ with more than one orbit have either a fixed point or just two point orbits, one of which is a plane.
This conjecture implies that every special line class of $\PG(3,q)$ is trivial.
More precisely, this means that up to taking complements, it is either empty, or a point-pencil, or the set of all lines in a plane, or the disjoint union of the latter two.

\subsection{Classical Cameron-Liebler line classes}
In the literature, these special line classes have soon been called \emph{Cameron-Liebler line classes} \cite{Penttila-1991-GeomDed37[3]:245-252}.
The conjecture of Cameron and Liebler was first disproven by Drudge in \cite{DRUDGE1999263}, where he constructed a non-trivial Cameron-Liebler line class of parameter $x=5$ in $\PG(3,3)$.
This eventually led to an infinite family of examples of parameter $x=\frac{q^2+1}{2}$ in $\PG(3,q)$ for $q$ odd, see \cite{BRUEN199935}.
These results immediately drew much interest to this question, with the main focus on finding new infinite families of non-trivial examples of Cameron-Liebler line classes, or giving non-existence conditions on the parameter $x$.

Some major non-existence results can be found in \cite{Filmus-Ihringer-2019-JCTSA162:241-270,GAVRILYUK2014224,MR3115334}.
While many other results exist, these three references give a good understanding on the situation today.
In $\PG(3,q)$ many other non-trivial infinite families of examples have been found.
Non-isomorphic examples with the same parameter $x = \frac{q^2+1}{2}$ as in the initial construction~\cite{BRUEN199935} have been obtained for $q \equiv 1 \bmod 4$ in \cite{COSSIDENTE2019104}, for $q\geq 7$ in \cite{MR3927192}, and for $q\geq 5$ in \cite{MR3742843}.
A family with a different parameter $x=\frac{q^2-1}{2}$ has been constructed in \cite{DBDMKR2016,FENG2015307} for $q \equiv 5,9 \bmod{12}$.
Finally, an example of parameter $x=\frac{(q+1)^2}{3}$ for $q \equiv 2\bmod 3$ was described in \cite{FENG2021107780}.
To the best of our knowledge, all currently known constructions are listed above.

\subsection{Generalizations}

Many generalizations of the notion of Cameron-Liebler line classes have been studied.

\paragraph{Generalization of the dimension $n$ of the ambient space.}
While the setting in the original paper by Cameron and Liebler \cite{Cameron-Liebler-1982-LAA46:91-102} was already a general ambient geometry $\PG(n-1,q)$, the notion of special classes of lines had been restricted to $\PG(3,q)$.
The study of Cameron-Liebler line classes in the general $\PG(n-1,q)$ was first conducted in Drudge's thesis \cite{ThesisDrudge}.

\paragraph{Generalization of the dimension $k$ of the subspaces.}
A second generalization was described in \cite{Rodgers-Storme-Vansweevelt-2018-Comb38[3]:739-757}, where instead of looking at sets of lines, the authors look at sets of $k$-spaces in $\PG(2k-1,q)$.%
\footnote{In this article, we will always use vector space dimension, except in established symbols like $\PG$.
Hence $k$-spaces are spaces of geometric dimension $k-1$.}
This approach was combined with generalized dimension $n$ in \cite{Blokhuis-DeBoeck-DHaeseleer-2019-DCC87[8]:1839-1856}, resulting in objects called \emph{Cameron-Liebler sets of $k$-spaces}.

All the generalizations so far are based on the natural adaption of the above stated algebraic characterization of Cameron-Liebler line classes, resulting in the investigation of characteristic vectors in the row space of the point-($k$-space) incidence matrix in $\PG(n-1,q)$.
In \cite{Filmus-Ihringer-2019-JCTSA162:241-270}, a connection with \emph{Boolean degree $1$ functions} in the Johnson or $q$-Johnson scheme was observed and described, leading to a full classification for $n-k\geq 2$, where $q \leq 5$ and $n\geq 5$, or, $q=2$ and $n=4$.

In this generalized situation, again a \emph{parameter} $x$ is defined, but unlike for the classical Cameron-Liebler line classes in $\PG(3,q)$, the number $x$ may be a proper fraction in the case that $k \nmid n$.
Strikingly, no non-trivial examples of Cameron-Liebler sets of $k$-spaces have been found so far, except the classical examples in $\PG(3,q)$.
Hence there has been much effort on non-existence results.
Most of these results rule out certain parameters.
The most recent development is found in \cite{DEBEULE2022102047, DeBeule-Mannaert-Storme-2024-DCC-prelim, Filmus-Ihringer-2019-JCTSA162:241-270, ihringer2024classification}, where the last reference shows that for fixed $k$ and $n$ sufficiently large, only trivial examples do exist.

\paragraph{The set case $q=1$.}
Via the interpretation as $q$-analogs, there is a natural counterpart of the definitions in the case $q=1$.
The counterpart of Cameron-Liebler sets of $k$-spaces are Boolean degree $1$ functions, which have been shown to be trivial in all cases, see~\cite{Filmus-Ihringer-2019-JCTSA162:241-270}.

\paragraph{Generalization of the degree $t$.}
The approach as Boolean functions opens the way for yet another direction of generalization, namely of the degree $t$.
In the algebraic characterization, this corresponds to the replacement of the points by general $t$-spaces.
While the degree $t=1$ case has been studied extensively, this is not the case for higher degree $t$.
For $t=2$, some examples are given in \cite{DeBeule-DHaeseleer-Ihringer-Mannaert-2023-ElecJComb30[1]:P1.31}.
Here, the authors also explain the implications of Delsarte's concept of design-orthogonality \cite{Delsarte-1977-PhilRR32[5-6]:373-411} for degree $2$, see also \cite{Roos-1982-DelftProgrRep7[2]:98-109}.
Moreover, in \cite{FILMUS,Filmus-Ihringer-2019-DM342[12]:111614}, it has been shown that in the set case $q=1$ there exists a constant $m(t)$ such that for all $2t \leq k \leq \frac{n}{2}$, any Boolean degree $t$ function is an $m(t)$-junta, which intuitively means that it depends only on at most $m(t)$ coordinates of the underlying base set.
Moreover, the bound $2t \leq k$ has been shown to be sharp~\cite{FILMUS}.

\paragraph{A unified generalization: Antidesigns.}
The goal of this paper is to develop a coherent theory covering all the above generalizations.
In our investigations, the connection to the theory of designs turned out to be fruitful at several places.
Actually, degree $t$ functions are the same as $t$-antidesigns in the Johnson (for $q = 1$) or $q$-Johnson (for $q \geq 2$) scheme, which have been investigated already in \cite{Roos-1982-DelftProgrRep7[2]:98-109} in 1982, the same year as the original paper of Cameron and Liebler \cite{Cameron-Liebler-1982-LAA46:91-102}.

We remark that our definition of the degree in Section~\ref{sec:weights_degree} is based on the chain of row spaces of the incidence matrices of $t$-subspaces vs. $k$-subspaces.
This chain is equivalent to the decomposition of the ($q$-)Johnson scheme into orthogonal eigenspaces, see Remark~\ref{rem:vi_vs_johnson} for more details.
While the theory of association schemes provides powerful algebraic methods, it turned out that these are not required for our results and therefore we decided to use this pure combinatorial approach.
The choice of the title of this article connects to the existing literature on (generalized) Cameron-Liebler sets, which are usually based on the $q$-Johnson scheme.

\subsection{Structure of the paper}
In Section~\ref{sec:prelim} the required preliminaries are given.
Lemma~\ref{lem:lambda_min} contains a formula on the parameter $\lambda_{\min}$ from design theory which, to the best of our knowledge, is new.
Section~\ref{sec:alg_geom} investigates algebraic and geometric properties, which -- as we will see -- can also be understood as antidesigns and designs.
The main result of that section is the characterization in Theorem~\ref{thm:alg_geom}.
In Section~\ref{sec:weights_degree}, \emph{degree} and \emph{weights} are defined, and fundamental properties are derived.
Theorem~\ref{thm:size_divisibility} gives a divisibility property of the size of integer valued (and in particular, Boolean) functions of degree $t$.
In Section~\ref{sec:dual}, the effect of dualization on the degree (Theorem~\ref{thm:dual_v_space}) and on the weights (Theorem~\ref{thm:dual_wt_dist}) is studied.
Section~\ref{sec:weights_degree:ambient} investigates the implications of the two elementary ways to change the ambient space in Theorem~\ref{thm:change_ambient_P} and Theorem~\ref{thm:change_ambient_H}.
As an application, we get the degree of the \emph{basic sets} in Theorem~\ref{thm:deg_f}.
Finally, the case $q=1$, $n=6$ and $k=3$ will be discussed in more detail in Section~\ref{sec:example}.

\section{Preliminaries}\label{sec:prelim}
\subsection{$q$-numbers and $q$-binomial coefficients}
For an integer $n$ and a prime power $q \geq 2$, we define $\qnumb{n}{q} = \qnumbS{n} = \frac{q^n-1}{q-1}$.
For $n \geq 0$, this expression cancels to $\qnumb{n}{q} = 1 + q + \ldots + q^{n-1}$, and for all $n\in\Z$, one checks the negation formula $\qnumb{-n}{q} = -\frac{1}{q^n}\qnumb{n}{q}$.
Therefore, we may also allow $q = 1$, which yields $\qnumb{n}{1} = n$ and which is also the reason for the name \emph{$q$-analog of the number $n$} for $\qnumb{n}{q}$.
An important property (and further analogy) is $\gcd(\qnumb{a}{q}, \qnumb{b}{q}) = \qnumb{\gcd(a,b)}{q}$ for all $a,b\in\N$.%
\footnote{In this article, $\N = \{0,1,2,\ldots\}$.}

For a non-negative integer $k$, we use the notation of the \emph{falling $q$-factorial} $(\qnumb{n}{q})^{\underline{k}} = \prod_{i=0}^{k-1}\qnumb{n-i}{q}$ and the \emph{$q$-factorial} $\qnumb{k}{q}! = \qnumbS{k}! = (\qnumb{k}{q})^{\underline{k}}$.
Note that $\qnumb{k}{1}! = k!$.
Now for $n,k\in\Z$, the \emph{Gaussian binomial coefficient} or \emph{$q$-binomial coefficient} is defined as
\[
	\qbinom{n}{k}{q}
	= \qbinomS{n}{k}
	= \begin{cases}
		\frac{(\qnumb{n}{q})^{\underline{k}}}{\qnumb{k}{q}!} & \text{if } k \geq 0\text{,}\\
		0 & \text{if }k < 0\text{,}
	\end{cases}
\]
resembling the common binomial coefficient for $q = 1$.
For $n \geq 0$, the statement $\qbinom{n}{k}{q} \neq 0$ is equivalent to $k\in\{0,\ldots,n\}$, and in this case we may use the familiar formula
\[
	\qbinom{n}{k}{q} = \frac{\qnumb{n}{q}!}{\qnumb{k}{q}!\qnumb{n-k}{q}!}\text{.}
\]
For fixed non-negative integers $0 \leq k \leq n$, the Gaussian binomial coefficient $\qbinom{n}{k}{q}$ is a polynomial in $\Z[q]$ of degree $k(n-k)$.
By the $q$-analog of Kummer's theorem, its factorization into irreducibles given by
\begin{equation}\label{eq:q_kummer}
	\qbinom{n}{k}{q} = \prod_{\substack{i\in\Z_{>0} \\ (n \bmod i) < (k\bmod i)}} \Phi_i\text{,}
\end{equation}
where $\Phi_i$ denotes the $i$-th cyclotomic polynomial \cite[Eq.~(10)]{Knuth-Wilf-1989-JRAM396:212-219} and $(n \bmod k)$ denotes the remainder of the integer division of $n$ by $k$.

Based on $(\qnumb{-n}{q})^{\underline{k}} = (-1)^k \frac{1}{q^{kn + \binom{k}{2}}} (\qnumb{n+k-1}{q})^{\underline{k}}$ (where $k\geq 0$), we get the negation formula
\begin{equation}\label{eq:qbinom_negation}
	\qbinom{-n}{k}{q} = (-1)^k \frac{1}{q^{kn + \binom{k}{2}}} \qbinom{n+k-1}{k}{q}
\end{equation}
for all $n,k\in\Z$.

As basic properties of these numbers, we mention the reflection formula
\[
	\qbinom{n}{k}{q} = \qbinom{n}{n - k}{q} \qquad \text{for all }n,k\in\Z\text{ with }n \geq 0\text{,}
\]
and the $q$-Pascal triangle identities
\begin{equation*}
	\qbinom{n}{k}{q} = \qbinom{n-1}{k-1}{q} + q^k\qbinom{n-1}{k}{q} = q^{n-k}\qbinom{n-1}{k-1}{q} + \qbinom{n-1}{k}{q}
	\qquad\text{for all }n,k\in\Z\text{.}
\end{equation*}
An important formula is the $q$-Vandermonde identity
\begin{equation}\label{eq:q_vandermonde}
	\qbinom{n+m}{k}{q} = \sum_{i=0}^k \qbinom{m}{k-i}{q} \qbinom{n}{i}{q} q^{i(m-k+i)}
	\qquad\text{for all }n,m,k\in\Z\text{.}
\end{equation}

\subsection{Subset and subspace lattices}
Let $n$ be an integer and $q$ a prime power.
The value $q=1$ is allowed and called the \emph{set case}, where we fix a set $V$ of size $n$.
The case $q \geq 2$ is the \emph{$q$-analog case}, where we fix an $\F_q$ vector space of dimension $n$.
We denote the lattice of all subsets (for $q = 1$) or subspaces (for $q \geq 2$) by $\mathcal{L}(V)$.

We use the unified notation $\rk X = \#X$ for sets $X$ and $\rk X = \dim X$ for $\F_q$ vector spaces $X$.
The notation is based on the fact that it is the lattice-theoretic rank of $X$ in the graded lattice $\mathcal{L}(V)$ in both cases.
Moreover, we define the \emph{corank} $\cork X = n - \rk X$.
We will call an element of $U \in \mathcal{L}(V)$ a \emph{subspace} (or \emph{$(\rk U)$-subspace}), denoted by $U \leq V$ for a coherent treatment of both cases.
The set of all subspaces of $V$ of fixed rank $k\in\Z$ will be denoted by $\qbinom{V}{k}{q}$ or $\qbinomS{V}{k}$;
its cardinality is
\[
	\#\qbinom{V}{k}{q} = \qbinom{n}{k}{q}\text{.}
\]
The elements of $\qbinomS{V}{1}$, $\qbinomS{V}{2}$, $\qbinomS{V}{3}$ and $\qbinomS{V}{n-1}$ will be called \emph{points}, \emph{lines}, \emph{planes} and \emph{hyperplanes}, respectively.
There is a unique subspace of rank $0$, which is $\emptyset$ for $q=1$ and the zero space $\{\boldsymbol{0}\}$ for $q \geq 2$.
We will use the uniform notation $\boldsymbol{0}$.

In the set case, each $U \subseteq V$ has a unique complement $U^\complement \coloneqq V \setminus U$.
The automorphism group $\Aut(\mathcal{L}(V))$ equals the symmetric group $S_V$ on $V$.

For $q \geq 2$ and $n \geq 3$, the automorphism group $\Aut(\mathcal{L}(V))$ is given by $\PGammaL(V)$ by the fundamental theorem of projective geometry.%
\footnote{For $n = 2$, the group is larger: $\Aut(\mathcal{L}(V))$ is induced by the symmetric group on $\qbinomS{V}{1}$.}
The lattice $\mathcal{L}(V)$ is known to be self-dual, where an anti-automorphism given by $U \mapsto U^\complement = V\setminus U$ in the case $q=1$ and $U \mapsto U^\perp$, where $U^\perp$ is the orthogonal subspace with respect to some non-degenerate bilinear form on $V$.
In the following, $U \mapsto U^\perp$ will uniformly denote some fixed anti-automorphism of $\mathcal{L}(V)$.
For any $W\in\mathcal{L}(V)$, the map $\mathcal{L}(W) \to \mathcal{L}(V/W^\perp)$, $U \mapsto U^\perp / W^\perp$ is an anti-isomorphism of lattices.

For $\mathcal{F} \subseteq \mathcal{L}(V)$ and a chain $I \subseteq J \subseteq V$, we define
\[
	\mathcal{F}|_I^J = \{F\in \mathcal{F} \mid I \subseteq F \subseteq J\}.
\]
Moreover, we use the abbreviations
\[
	\mathcal{F}|_I = \mathcal{F}|_I^V
	\quad\text{and}\quad
	\mathcal{F}|^J = \mathcal{F}|_{\boldsymbol{0}}^J\text{.}
\]

We need the following well-known counting formula, see for example \cite[Lem.~1]{Braun-Kiermaier-Wassermann-2018-SignalsCommunTechnol:171-211} or \cite[Lem.~9.3.2]{Brouwer-Cohen-Neumaier-1989}.
\begin{lemma}\label{lem:subspace_counting}
	Let $a,b,u\in\{0,\ldots,n\}$ and let $B \in\qbinomS{V}{b}$.
	For $A\in\qbinomS{B}{a}$, we have
	\[
		\#\Big\{U \in\qbinomS{V}{u} \mid U \cap B = A\Big\} = q^{(b-a)(u-a)} \qbinom{n-b}{u-a}{q}
	\]
	and
	\[
		\#\Big\{U \in\qbinomS{V}{u} \mid \rk (U \cap B) = a\Big\} = q^{(b-a)(u-a)} \qbinom{b}{a}{q} \qbinom{n-b}{u-a}{q}\text{.}
	\]
\end{lemma}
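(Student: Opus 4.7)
The plan is to establish the first formula, from which the second follows by summing over $A\in\qbinomS{B}{a}$.

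For the first formula, the condition $U \cap B = A$ is equivalent to $A \subseteq U$ together with $U \cap B \subseteq A$. In the set case $q=1$ this forces $U\setminus A$ to be a $(u-a)$-subset of $V\setminus B$, so there are $\binom{n-b}{u-a}$ choices, matching the claim since $q^{(b-a)(u-a)}=1$. For $q\geq 2$ I would pass to the quotient $V/A$, which has rank $n-a$ and contains the image of $B$ of rank $b-a$. The $u$-subspaces $U$ with $A\subseteq U$ and $U\cap B=A$ correspond bijectively to the $(u-a)$-subspaces of $V/A$ that meet the image of $B$ only in $\boldsymbol{0}$.

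The problem thus reduces to the auxiliary fact that in a rank-$d$ space $W$ over $\F_q$ with a fixed rank-$m$ subspace $M$, the number of rank-$k$ subspaces of $W$ disjoint from $M$ equals $q^{mk}\qbinom{d-m}{k}{q}$. I would prove this by counting ordered bases $(v_1,\ldots,v_k)$ spanning such a subspace: inductively, the $i$-th vector must lie outside $M+\langle v_1,\ldots,v_{i-1}\rangle$, giving $q^d-q^{m+i-1}$ choices. Factoring out $q^m$ from each of the $k$ factors yields $q^{mk}\prod_{i=0}^{k-1}(q^{d-m}-q^i)$, and dividing by the number $\prod_{i=0}^{k-1}(q^k-q^i)$ of ordered bases of a rank-$k$ subspace gives exactly $q^{mk}\qbinom{d-m}{k}{q}$. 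Substituting $d=n-a$, $m=b-a$, $k=u-a$ proves the first claim.

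For the second formula, the sets $\{U\in\qbinomS{V}{u}\mid U\cap B=A\}$ are pairwise disjoint as $A$ ranges over $\qbinomS{B}{a}$, and their union is $\{U\in\qbinomS{V}{u}\mid \rk(U\cap B)=a\}$. Since $\#\qbinomS{B}{a}=\qbinom{b}{a}{q}$, summing the first formula over $A$ gives the result. The argument is largely bookkeeping; the only mild obstacle is handling the two cases $q=1$ and $q\geq 2$ in parallel, which is why I separate off the set case before invoking the quotient construction.
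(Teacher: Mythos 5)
Your proof is correct and complete. The paper itself does not prove this lemma -- it only cites it as a well-known counting formula (referring to Braun--Kiermaier--Wassermann and Brouwer--Cohen--Neumaier) -- so there is no in-paper argument to compare against; your route via the quotient $V/A$, reducing to the count $q^{mk}\qbinomS{d-m}{k}$ of rank-$k$ subspaces meeting a fixed rank-$m$ subspace trivially, and then summing over $A\in\qbinomS{B}{a}$ for the second formula, is the standard and sound way to establish it. The ordered-basis count $\prod_{i=0}^{k-1}(q^d-q^{m+i})$ divided by $\prod_{i=0}^{k-1}(q^k-q^i)$ checks out, the degenerate cases (e.g.\ $u<a$ or $u-a>n-b$) are handled automatically by the convention $\qbinomS{N}{k}=0$ for $k<0$ or $k>N$, and the separate treatment of $q=1$ is consistent with the paper's unified set/subspace framework.
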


The basic objects of our investigation will be the functions $f: \qbinomS{V}{k} \to \R$ for fixed dimension $k$.
We will silently identify the function $f$ with the vector $(f(K))_{K\in\qbinomS{V}{k}} \in \R^{\qbinomS{V}{k}}$.
In the case $\im f \subseteq \{0,1\}$, the function $f$ is called \emph{Boolean}.
A Boolean function $f$ can be identified with its support $\mathcal{F} = f^{-1}(\{1\}) \subseteq \qbinomS{V}{k}$.
In this way, we get a bijective correspondence between the set $\{0,1\}^{\qbinomS{V}{k}}$ of all Boolean functions on $\qbinomS{V}{k}$ and the power set $\powerset(\qbinomS{V}{k})$ of $\qbinomS{V}{k}$, where the reverse direction is given by the \emph{characteristic function}
\[
	\chi_\mathcal{F} : K \mapsto\begin{cases} 1 & \text{if }K\in \mathcal{F}\text{;}\\ 0 & \text{if }K\notin\mathcal{F}\text{.}\end{cases}
\]
The all-one function $\qbinomS{V}{k} \to \R$ will be denoted by $\boldsymbol{1}_{\qbinomS{V}{k}}$ or simply $\boldsymbol{1}$.
Clearly, $\boldsymbol{1}_{\qbinomS{V}{k}} = \chi_{\qbinomS{V}{k}}$.
For $U\in\mathcal{L}(V)$, we denote the characteristic function of the \emph{$(\rk U)$-pencil} $\qbinomS{V}{k}|_U$ by $\boldsymbol{x}_U^{(k)}$ and the characteristic function of the \emph{dual $(\cork U)$-pencil} $\qbinomS{V}{k}|^U$ by $\bar{\boldsymbol{x}}_U^{(k)}$.%
\footnote{Remember that $\cork U$ denotes the corank of $U$ in $V$, i.\,e. $\cork U = n - \rk U$.}
Note that $\boldsymbol{x}_U^{(k)} = \boldsymbol{0}$ for $\rk(U) > k$, and $\bar{\boldsymbol{x}}_U^{(k)} = \boldsymbol{0}$ for $\cork(U) > n-k$.
Moreover, $\boldsymbol{x}_{U_1}^{(k)} \cdot \boldsymbol{x}_{U_2}^{(k)} = \boldsymbol{x}_{U_1 \cap U_2}^{(k)}$ and $\bar{\boldsymbol{x}}^{(k)}_{U_1} \cdot \bar{\boldsymbol{x}}_{U_2}^{(k)} = \bar{\boldsymbol{x}}_{U_1 + U_2}^{(k)}$ for all $U_1, U_2\in\mathcal{L}(V)$.

Furthermore, we define the inner product $\langle f,g\rangle = \sum_{K\in \qbinomS{V}{k}} f(K) g(K)$ on the set of functions $\qbinomS{V}{k} \to \R$.
For subsets $\mathcal{F}$ and $\mathcal{G}$ of $\qbinomS{V}{k}$ we have $\langle \chi_{\mathcal{F}}, \chi_{\mathcal{G}} \rangle = \#(\mathcal{F} \cap \mathcal{G})$.
Moreover, we use the abbreviation
\[
	\#f = \langle f, \vek{1}\rangle = \sum_{K\in\qbinomS{V}{k}} f(K)\text{,}
\]
noting that
\[
	\#\chi_{\mathcal{F}} = \#\mathcal{F}\text{.}
\]

Let $x,y\in\{0,\ldots,n\}$ with $x\leq y$.
The matrix $W^{(n;x,y)} = W^{(xy)}\in \R^{\qbinomS{V}{x} \times \qbinomS{V}{y}}$ with the entries
\[
	W^{(xy)}_{XY}
	= \begin{cases}
		1 & \text{if }X \leq Y\text{;}\\
		0 & \text{otherwise.}
	\end{cases}
\]
is known as ($x$-vs.-$y$-)\emph{incidence} or \emph{inclusion} matrix.
The following fact has been shown in \cite{Gottlieb-1966-PAMS17[6]:1233-1237} for $q=1$ and \cite{Kantor-1972-MathZeit124[4]:315-318} for $q \geq 2$.
\begin{fact}\label{fact:full_rank}
	For $x \leq y \leq n-x$, the matrix $W^{(xy)}$ has full rank $\qbinomS{n}{x}$.
\end{fact}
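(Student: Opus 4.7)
The plan is to deduce full row rank of $W^{(xy)}$ from the invertibility of the Gram matrix $M = W^{(xy)} (W^{(xy)})^\top \in \R^{\qbinomS{V}{x} \times \qbinomS{V}{x}}$. For this I will expand $M$ as a non-negative combination of Gram matrices of the form $A^\top A$, among them a positive multiple of the identity, so that $M$ becomes manifestly positive definite.

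First, I would use Lemma~\ref{lem:subspace_counting} to identify $M_{X_1, X_2}$ as the number of $y$-subspaces through $X_1 + X_2$, which equals $\qbinom{n-2x+a}{n-y}{q}$ where $a = \rk(X_1 \cap X_2)$ (using $\rk(X_1 + X_2) = 2x - a$ together with the reflection formula). A suitable instance of the $q$-Vandermonde identity~\eqref{eq:q_vandermonde} (with parameters $n-2x$, $a$, $n-y$ in place of $m$, $n$, $k$) then yields
\[
    \qbinom{n-2x+a}{n-y}{q} \;=\; \sum_{i=0}^{x} \alpha_i \qbinom{a}{i}{q},
    \qquad \alpha_i \;=\; q^{i(y-2x+i)} \qbinom{n-2x}{n-y-i}{q}.
\]
Since $\qbinom{a}{i}{q}$ counts the $i$-subspaces contained in $X_1 \cap X_2$, it is precisely the $(X_1, X_2)$-entry of $(W^{(i,x)})^\top W^{(i,x)}$. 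Noting that $W^{(x,x)} = I$, this translates into the matrix identity
\[
    M \;=\; \alpha_x I + \sum_{i=0}^{x-1} \alpha_i (W^{(i,x)})^\top W^{(i,x)}.
\]

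Finally, the assumption $x \leq y \leq n - x$ gives $0 \leq n - y - x \leq n - 2x$, hence $\alpha_x = q^{x(y-x)} \qbinom{n-2x}{n-y-x}{q} > 0$, while each $\alpha_i$ with $i < x$ is non-negative. For any nonzero $v \in \R^{\qbinomS{V}{x}}$ I then get
\[
    v^\top M v \;=\; \alpha_x \|v\|^2 + \sum_{i=0}^{x-1} \alpha_i \|W^{(i,x)} v\|^2 \;\geq\; \alpha_x \|v\|^2 \;>\; 0,
\]
so $M$ is positive definite, in particular invertible, and $W^{(xy)}$ therefore has full row rank $\qbinom{n}{x}{q}$.

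The conceptual heart of the argument --- and the only step I expect to require thought --- is finding the right specialization of $q$-Vandermonde that decomposes $M$ in this way. The critical point is that the coefficient in front of $I$ is precisely nonzero in the range $x \leq y \leq n - x$ required by the hypothesis, which is what upgrades the obviously PSD sum to a positive definite matrix; outside this range $\alpha_x$ vanishes and the argument genuinely breaks down, as it must.
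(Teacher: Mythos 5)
Your argument is correct, but note that the paper does not prove this statement at all: it is quoted as a known fact with citations to Gottlieb ($q=1$) and Kantor ($q\geq 2$), whose original arguments are of a rather different, more combinatorial/inductive flavour. What you give instead is the classical self-contained \enquote{Gram matrix} proof: writing $M = W^{(xy)}(W^{(xy)})^\top$, observing that $M_{X_1X_2}$ depends only on $\rk(X_1\cap X_2)$, and using the $q$-Vandermonde identity to expand $M = \sum_{i=0}^{x}\alpha_i (W^{(ix)})^\top W^{(ix)}$ with $\alpha_i \geq 0$ and, crucially, $\alpha_x = q^{x(y-x)}\qbinomS{n-2x}{n-y-x} > 0$ exactly when $x \leq y \leq n-x$. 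All the details check out: the identification $M_{X_1X_2} = \qbinomS{n-2x+a}{n-y}$, the truncation of the Vandermonde sum at $i=x$ (the discarded terms carry the factor $\qbinomS{a}{i}=0$ for $i>a$), the non-negativity of the remaining $\alpha_i$ (the possibly negative exponent of $q$ is harmless), and the identification $(W^{(xx)})^\top W^{(xx)} = I$. This route buys a short, uniform treatment of the cases $q=1$ and $q\geq 2$ within the combinatorial toolkit the paper already sets up ($q$-Vandermonde and Lemma~\ref{lem:subspace_counting}), and it makes transparent why the hypothesis $x\leq y\leq n-x$ is exactly the right one; it is essentially the Bose--Mesner-algebra viewpoint (positivity of the eigenvalues of $M$) in disguise, consistent with the paper's stated preference for avoiding explicit association-scheme machinery.
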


The matrix $W^{(xy)}$ and its transpose $(W^{(xy)})^\top$ describe linear maps
\[
	\phi_\downarrow^{(xy)} : \R^{\qbinomS{V}{y}} \to \R^{\qbinomS{V}{x}}
	\quad\text{and}\quad
	\phi_\uparrow^{(yx)} : \R^{\qbinomS{V}{x}} \to \R^{\qbinomS{V}{y}}\text{.}
\]
By Fact~\ref{fact:full_rank}, the maps $\phi_\downarrow^{(xy)}$ are surjective, and the maps $\phi_\uparrow^{(yx)}$ are injective.

A straightforward double counting argument (see \cite[Lem.~3.3]{Kiermaier-Wassermann-2023-GlasMatSerIII58[2]:181-200} in a more general context) shows
\begin{lemma}\label{lem:phi_compose}
Let $0 \leq x \leq y \leq z \leq n$ be integers.
Then
\[
	\phi_\downarrow^{(xy)}\circ\phi_\downarrow^{(yz)} = \qbinomS{z-x}{y-x}\, \phi_\downarrow^{(xz)}
	\quad\text{and}\quad
	\phi_\uparrow^{(zy)}\circ\phi_\uparrow^{(yx)} = \qbinomS{z-x}{y-x}\, \phi_\uparrow^{(zx)}\text{.}
\]
\end{lemma}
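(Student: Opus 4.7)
The plan is to work directly from the matrix definitions. Unfolding the map, for $f \in \R^{\qbinomS{V}{y}}$ and $X \in \qbinomS{V}{x}$,
\[
	(\phi_\downarrow^{(xy)} f)(X) = \sum_{Y \in \qbinomS{V}{y},\, Y \geq X} f(Y),
\]
and analogously for $\phi_\downarrow^{(yz)}$. Substituting one into the other and swapping the order of summation yields, for any $g \in \R^{\qbinomS{V}{z}}$ and any $X \in \qbinomS{V}{x}$,
\[
	\bigl((\phi_\downarrow^{(xy)} \circ \phi_\downarrow^{(yz)}) g\bigr)(X)
	= \sum_{Z \in \qbinomS{V}{z},\, Z \geq X} \#\bigl\{ Y \in \qbinomS{V}{y} \mid X \leq Y \leq Z \bigr\}\, g(Z).
\]

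The key combinatorial step is to observe that the inner count depends only on $z-x$ and $y-x$, not on the specific $X$ or $Z$: the $y$-subspaces $Y$ with $X \leq Y \leq Z$ are in bijection with the $(y-x)$-subspaces of the rank-$(z-x)$ interval $[X,Z]$ of $\mathcal{L}(V)$ (which is isomorphic to $\mathcal{L}(Z/X)$ in the $q$-analog case, or $\mathcal{L}(Z\setminus X)$ in the set case). Hence this number is exactly $\qbinomS{z-x}{y-x}$. Pulling this constant out of the sum gives
\[
	\bigl((\phi_\downarrow^{(xy)} \circ \phi_\downarrow^{(yz)}) g\bigr)(X)
	= \qbinomS{z-x}{y-x} \sum_{Z \geq X,\, \rk Z = z} g(Z)
	= \qbinomS{z-x}{y-x}\, (\phi_\downarrow^{(xz)} g)(X),
\]
which is the first identity.

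For the second identity, I would simply transpose: since $\phi_\uparrow^{(yx)} = (\phi_\downarrow^{(xy)})^\top$ at the level of matrices, taking the transpose of the first identity (and using $(AB)^\top = B^\top A^\top$) converts $\phi_\downarrow^{(xy)} \circ \phi_\downarrow^{(yz)}$ into $\phi_\uparrow^{(zy)} \circ \phi_\uparrow^{(yx)}$ and leaves the scalar unchanged. There is essentially no obstacle here; the only point worth spelling out carefully is the uniform treatment of the interval count $\qbinomS{z-x}{y-x}$ for both $q=1$ and $q \geq 2$, which is immediate once one notes that intervals in $\mathcal{L}(V)$ of rank $r$ are themselves isomorphic to the full subset/subspace lattice of rank $r$.
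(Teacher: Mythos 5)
Your proof is correct and is precisely the ``straightforward double counting argument'' that the paper invokes for this lemma (the paper itself does not spell it out, but only cites an external reference): unfold the composed sum, swap the order of summation, and identify the inner count of $y$-subspaces in the interval $[X,Z]$ as $\qbinomS{z-x}{y-x}$, with the second identity following by transposition. No gaps.
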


\subsection{Designs}
A set $\mathcal{D} \subseteq \qbinomS{V}{k}$ is called a $t$-$(n,k,\lambda)_q$ design if each $T\in\qbinomS{V}{t}$ is contained in exactly $\lambda$ elements of $\mathcal{D}$.
The number $t$ is called the \emph{strength}, and the elements of $\mathcal{D}$ are called \emph{blocks} of $\mathcal{D}$.
In the case $q=1$, $\mathcal{D}$ is known as a \emph{simple combinatorial design}, and in the $q$-analog situation $q \geq 2$ it is known as a \emph{simple subspace design}.
For the theory of \emph{combinatorial designs} ($q = 1$) the reader is referred to \cite{Beth-Jungnickel-Lenz-1999-DesignTheoryI,Beth-Jungnickel-Lenz-1999-DesignTheoryII} and for the $q$-analog situation of the \emph{subspace designs} ($q \geq 2$) to \cite{Braun-Kiermaier-Wassermann-2018-SignalsCommunTechnol:171-211}.

Spelling out the design property for the characteristic function $\chi_{\mathcal{D}}$ of $\mathcal{D}$, we get the following generalization.
For $\lambda\in\R$, a function $f : \qbinomS{V}{k} \to \R$ is called a \emph{real $t$-$(n,k,\lambda)_q$ (subspace) design} if $\langle \boldsymbol{x}^{(k)}_T, f\rangle = \lambda$ for each $T\in\qbinomS{V}{t}$.
Via characteristic functions, the simple designs defined above correspond to the real designs $f$ which are Boolean functions.
For $\im f \subseteq \Z$, the function $f$ is called a \emph{signed (subspace) design}, and for $\im f \subseteq \N$, it is called an (unqualified) \emph{(subspace) design}.
To avoid ambiguities, we may call the latter also \emph{possibly non-simple (subspace) design}.
A design with $\lambda = 0$ is called \emph{null design} or \emph{trade}.

Clearly, the set of real $t$-$(v,k,0)_q$ null designs equals $\ker\phi_{\downarrow}^{(tk)}$, and in general the real $t$-$(v,k,\lambda)_q$ designs are the solutions $f$ of the linear equation system $\phi_{\downarrow}^{(tk)}(f) = \lambda \vek{1}$.

\begin{fact}\label{fact:reduced_design}
	Let $f : \qbinomS{V}{k} \to \R$ be a real $t$-$(n,k,\lambda)_q$ design.
	Then for each $s\in\{0,\ldots,t\}$, $f$ is a real $s$-$(n,k,\lambda_s)_q$ design with
	\[
		\lambda_s = \frac{\qbinomS{n-s}{t-s}}{\qbinomS{k-s}{t-s}} \cdot \lambda = \frac{\qbinomS{n-s}{k-s}}{\qbinomS{n-t}{k-t}}\cdot \lambda\text{.}
	\]
	In particular, $\#f = \lambda_0$.
\end{fact}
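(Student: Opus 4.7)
The plan is to derive the conclusion directly from Lemma~\ref{lem:phi_compose}, which already encodes the relevant double counting at the level of the down-maps. Spelling out the design hypothesis means $\phi_\downarrow^{(tk)}(f) = \lambda\,\vek{1}_{\qbinomS{V}{t}}$, and the goal is to show that $\phi_\downarrow^{(sk)}(f)$ is a constant multiple of $\vek{1}_{\qbinomS{V}{s}}$ with the asserted constant.

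First I would apply the operator $\phi_\downarrow^{(st)}$ to both sides of the defining equation. On the left-hand side, Lemma~\ref{lem:phi_compose} with $(x,y,z) = (s,t,k)$ gives
\[
    \phi_\downarrow^{(st)}\bigl(\phi_\downarrow^{(tk)}(f)\bigr) = \qbinomS{k-s}{t-s}\, \phi_\downarrow^{(sk)}(f)\text{.}
\]
On the right-hand side, I would compute $\phi_\downarrow^{(st)}(\vek{1}_{\qbinomS{V}{t}})$ pointwise: its value at $S \in \qbinomS{V}{s}$ counts the $t$-subspaces containing $S$, which by Lemma~\ref{lem:subspace_counting} (applied to $V/S$, or simply by definition of the Gaussian binomial) equals $\qbinomS{n-s}{t-s}$. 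Hence
\[
    \qbinomS{k-s}{t-s}\, \phi_\downarrow^{(sk)}(f) = \lambda \, \qbinomS{n-s}{t-s}\, \vek{1}_{\qbinomS{V}{s}}\text{,}
\]
and dividing by $\qbinomS{k-s}{t-s}$ (which is nonzero since $s \leq t \leq k$) shows that $f$ is a real $s$-$(n,k,\lambda_s)_q$ design with $\lambda_s = \frac{\qbinomS{n-s}{t-s}}{\qbinomS{k-s}{t-s}}\lambda$, matching the first displayed form.

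The equality with the second form $\frac{\qbinomS{n-s}{k-s}}{\qbinomS{n-t}{k-t}}\lambda$ is then a routine rewriting in terms of $q$-factorials: both fractions simplify to $\frac{\qnumbS{n-s}!\,\qnumbS{k-t}!}{\qnumbS{n-t}!\,\qnumbS{k-s}!}$, so no further combinatorial input is required. Finally, the statement $\#f = \lambda_0$ follows by specializing $s = 0$: the unique rank-$0$ subspace is contained in every $K \in \qbinomS{V}{k}$, so $\langle \boldsymbol{x}^{(k)}_{\vek{0}}, f\rangle = \sum_{K \in \qbinomS{V}{k}} f(K) = \#f$.

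I do not foresee any real obstacle here; the only point to be careful about is the direction of Lemma~\ref{lem:phi_compose} (it produces the factor $\qbinomS{k-s}{t-s}$ on the composition side, which must end up in the denominator of $\lambda_s$) and the verification that the two displayed expressions for $\lambda_s$ coincide, which is purely mechanical.
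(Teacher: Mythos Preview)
Your argument is correct. The paper actually states this result as a \emph{Fact} without proof (it is a standard result in design theory), so there is no paper proof to compare against; your use of Lemma~\ref{lem:phi_compose} is precisely the natural route in the paper's framework and the verification of the two equivalent expressions for $\lambda_s$ is handled cleanly.
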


For each $t$, $n$ and $k$ there exist \emph{trivial simple designs}, which are the \emph{empty design} $\mathcal{D} = \emptyset$ with parameters $t$-$(n,k,0)$ and the \emph{complete design} $\mathcal{D} = \qbinom{V}{k}{q}$ with parameters $t$-$(n,k,\lambda_{\max})$ where $\lambda_{\max} = \qbinom{n-t}{k-t}{q}$.

An important question in design theory is to decide if a given parameter set $t$-$(n,k,\lambda)_q$ is \emph{realizable} as a simple/unqualified/signed design, meaning that a design with these parameters does exist.
By the above fact, a necessary condition for realizability as a signed design is the \emph{integrality} (or \emph{divisibility}) \emph{condition}, which states that all the associated numbers $\lambda_0,\ldots,\lambda_t$ are integers.
In that case the parameters $t$-$(n,k,\lambda)_q$ are called \emph{admissible}.
It is worth noting that not all admissible parameters are realizable as a simple design. For $q=1$, the smallest (in terms of $n$) admissible, non-realizable parameter set is $3$-$(11,5,2)_1$, see \cite{Dehon-1976-DM15[1]:23-25} and for a simplified argument \cite[Thm.~1]{Kiermaier-Pavcevic-2015-JCD23[11]:463-480}.
In contrast to that, for signed designs, the question of realizability has been settled completely.

\begin{fact}\label{fact:signed_designs_realizable}
	There exists a signed $t$-$(v,k,\lambda)_q$ design if and only if the parameters are admissible.
\end{fact}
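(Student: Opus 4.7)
The forward direction is immediate from Fact~\ref{fact:reduced_design}: if $f$ is a signed $t$-$(n,k,\lambda)_q$ design, then $f$ is also a signed $s$-design with parameter $\lambda_s$ for every $s\in\{0,\ldots,t\}$, and since $\lambda_s = \langle \boldsymbol{x}^{(k)}_S, f\rangle$ for any $S\in\qbinomS{V}{s}$ and $f$ takes integer values, each $\lambda_s$ is an integer; this is precisely admissibility.

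For the converse my plan is induction on $t$. The base case $t=0$ is immediate: given $\lambda_0\in\Z$, the function $\lambda_0 \cdot \chi_{\{K_0\}}$ for any fixed $K_0\in\qbinomS{V}{k}$ is a signed $0$-$(n,k,\lambda_0)_q$ design. For the inductive step, assume admissibility of $t$-$(n,k,\lambda)_q$. Then the parameters $(t-1)$-$(n,k,\lambda_{t-1})_q$ share the reduced values $\lambda_0,\ldots,\lambda_{t-1}$ and are admissible as well, so by the inductive hypothesis there is an integer-valued signed $(t-1)$-design $g : \qbinomS{V}{k}\to\Z$ of parameter $\lambda_{t-1}$. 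Its \emph{defect} $h := \phi_\downarrow^{(tk)}(g) - \lambda\boldsymbol{1} \in \Z^{\qbinomS{V}{t}}$ satisfies $\phi_\downarrow^{((t-1)t)}(h) = \boldsymbol{0}$: by Lemma~\ref{lem:phi_compose} the composition $\phi_\downarrow^{((t-1)t)}\circ\phi_\downarrow^{(tk)}$ applied to $g$ evaluates to $\qbinomS{k-t+1}{1}\lambda_{t-1}\boldsymbol{1}$, while $\lambda\cdot\phi_\downarrow^{((t-1)t)}(\boldsymbol{1}) = \lambda\qbinomS{n-t+1}{1}\boldsymbol{1}$, and these agree via the reduction formula of Fact~\ref{fact:reduced_design}. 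Thus $h$ is an integer null $(t-1)$-design on $\qbinomS{V}{t}$.

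The main obstacle is now to realize $h$ as $\phi_\downarrow^{(tk)}(g')$ for some integer null $(t-1)$-design $g'$ on $\qbinomS{V}{k}$; once achieved, $f := g - g' \in \Z^{\qbinomS{V}{k}}$ is the desired signed $t$-design of parameter $\lambda$. In other words, one must show that $\phi_\downarrow^{(tk)}$ maps the integer lattice of null $(t-1)$-designs on $\qbinomS{V}{k}$ \emph{onto} the integer lattice of null $(t-1)$-designs on $\qbinomS{V}{t}$. Over $\R$ this surjectivity follows from Fact~\ref{fact:full_rank}, but lifting it to $\Z$ is the crux. Two standard routes present themselves: one can exhibit, in Graver--Jurkat/Wilson style, an explicit generating family of integer null designs on $\qbinomS{V}{k}$ (for instance, built from elementary \emph{trades} supported on small flags) whose image under $\phi_\downarrow^{(tk)}$ already spans the integer null designs on $\qbinomS{V}{t}$; alternatively, one can invoke the known Smith normal form of $W^{(tk)}$, whose elementary divisors in both the set case and the $q$-analog case are products of $q$-binomial coefficients of the form $\qbinomS{k-s}{t-s}$, and match the admissibility conditions coming from Fact~\ref{fact:reduced_design} term by term.
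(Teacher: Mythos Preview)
The paper does not give a self-contained proof of this fact; its entire argument is a citation to Graver--Jurkat and Wilson for $q=1$ and to Ray-Chaudhuri--Singhi for $q\geq 2$. Your forward direction is correct, and your inductive reduction of the converse to the statement that $\phi_\downarrow^{(tk)}$ maps the integer lattice of null $(t{-}1)$-designs on $\qbinomS{V}{k}$ \emph{onto} that on $\qbinomS{V}{t}$ is also correct and is indeed the shape of the classical arguments.

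However, your proposal stops exactly where the real work begins. You write that ``two standard routes present themselves'' and then carry out neither. The first route---exhibiting an explicit integral generating family of trades on $\qbinomS{V}{k}$ whose images under $\phi_\downarrow^{(tk)}$ already generate all integer null $(t{-}1)$-designs on $\qbinomS{V}{t}$---is precisely the substance of the Graver--Jurkat and Wilson papers for $q=1$ and of Ray-Chaudhuri--Singhi for $q\geq 2$; it is not a one-line observation, and you have not supplied it. The second route is shakier than you suggest: Wilson's diagonal form for $W^{(tk)}$ is indeed available for $q=1$, but in the $q$-analog case the Smith normal form of $W^{(tk)}$ is considerably more delicate, and your description of the elementary divisors as ``products of $q$-binomial coefficients of the form $\qbinomS{k-s}{t-s}$'' is not an established result you can simply invoke. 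As written, then, your argument correctly isolates the crux but does not resolve it; it is a proof sketch rather than a proof, and the missing step is exactly what the references cited in the paper supply.
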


\begin{proof}
	See \cite{Graver-Jurkat-1973-JCTSA15[1]:75-90, Wilson-1973-UtilitasM4:207-215} for $q=1$ and \cite{RayChaudhuri-Singhi-1989-LAA114_115:57-68} for $q \geq 2$.
\end{proof}

For fixed $t$, $n$ and $k$, denote the smallest positive integer $\lambda$ such that $t$-$(n,k,\lambda)_q$ is admissible by $\lambda_{\min}$.
The admissible parameter sets $t$-$(n,k,\lambda)_q$ are precisely those with $\lambda_{\min} \mid \lambda$.

To our surprise, we are not aware about any further investigation of the number $\lambda_{\min}$ in the literature.
So to the best of our knowledge, the following result is new.

\begin{lemma}\label{lem:lambda_min}
We have
\begin{equation}
	\lambda_{\min}
	= \frac{\qbinomS{n-t}{k-t}}{\gcd\left(\qbinomS{n-0}{k-0}, \qbinomS{n-1}{k-1}, \ldots, \qbinomS{n-t}{k-t}\right)}
	= \frac{\lambda_{\max}}{\gcd\left(\qbinomS{n-0}{k-0}, \qbinomS{n-1}{k-1}, \ldots, \qbinomS{n-t}{k-t}\right)}\text{.}
	\label{eq:lambda_min_max}
\end{equation}
As polynomials in $\Z[q]$, the factorization of the gcd-part into irreducibles is
\begin{equation}\label{eq:lambda_min}
	{\textstyle
	\gcd\left(\qbinomS{n-0}{k-0}, \qbinomS{n-1}{k-1}, \ldots, \qbinomS{n-t}{k-t}\right)}
	= \prod_{\substack{i\in\Z_{>0} \\ t\leq (n \bmod i) < (k\bmod i)}} \Phi_i\text{,}
\end{equation}
where $\Phi_i\in\Z[q]$ denotes the $i$-th cyclotomic polynomial.
\end{lemma}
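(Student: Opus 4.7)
The plan is to split the statement into two independent pieces: the identification of $\lambda_{\min}$ with $\qbinomS{n-t}{k-t}/d$ for $d$ the gcd in question, and the cyclotomic factorization of $d$ in $\Z[q]$. The main obstacle is the residue analysis at the end; everything else merely repackages Fact~\ref{fact:reduced_design} and the $q$-analog of Kummer's theorem~\eqref{eq:q_kummer}.

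First, I would use Fact~\ref{fact:reduced_design} to express admissibility: a positive integer $\lambda$ makes $t$-$(n,k,\lambda)_q$ admissible iff $\lambda_s = \lambda\cdot\qbinomS{n-s}{k-s}/\qbinomS{n-t}{k-t}\in\Z$ for each $s\in\{0,\ldots,t\}$. Writing $d = \gcd(\qbinomS{n-0}{k-0},\ldots,\qbinomS{n-t}{k-t})$ and using the elementary identity $\gcd(\lambda b_0,\ldots,\lambda b_t) = \lambda\,\gcd(b_0,\ldots,b_t)$, the conjunction of the conditions $\qbinomS{n-t}{k-t}\mid \lambda\qbinomS{n-s}{k-s}$ for $s=0,\ldots,t$ is equivalent to the single divisibility $\qbinomS{n-t}{k-t}\mid \lambda d$. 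The term $s=t$ shows $d\mid\qbinomS{n-t}{k-t}$, so the smallest positive $\lambda$ satisfying the divisibility is $\qbinomS{n-t}{k-t}/d$. Together with $\lambda_{\max}=\qbinomS{n-t}{k-t}$, this gives \eqref{eq:lambda_min_max}.

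Second, I would apply \eqref{eq:q_kummer} to each term to express $\qbinomS{n-s}{k-s}$ as the squarefree product of those cyclotomic polynomials $\Phi_i$ with $((n-s)\bmod i) < ((k-s)\bmod i)$. Since distinct cyclotomics are pairwise coprime in $\Z[q]$, the polynomial gcd is $\prod_{i\in I}\Phi_i$, where $I = \bigcap_{s=0}^t \{i\in\Z_{>0} : ((n-s)\bmod i)<((k-s)\bmod i)\}$, and it remains to prove $I = \{i\in\Z_{>0} : t\leq (n\bmod i)<(k\bmod i)\}$.

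For this last step, set $n' = n\bmod i$ and $k' = k\bmod i$. If $t\leq n'<k'$, then every $s\in\{0,\ldots,t\}$ satisfies $s\leq t\leq n'<k'<i$, so $(n-s)\bmod i = n'-s$ and $(k-s)\bmod i = k'-s$ with no wraparound, and the required inequality reduces to $n'<k'$. Conversely, $s=0$ forces $n'<k'$; and if $n'<t$ then $s := n'+1\in\{1,\ldots,t\}$ gives $n - s\equiv -1\pmod i$, so $(n-s)\bmod i = i-1$ is the maximum residue and cannot be strictly less than $(k-s)\bmod i$. Thus $i\in I$ precisely when $t\leq n'<k'$, as claimed.
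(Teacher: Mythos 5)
Your proposal is correct and follows essentially the same route as the paper: the first identity is extracted from Fact~\ref{fact:reduced_design} (your gcd manipulation just makes explicit what the paper compresses into ``a closer look''), and the cyclotomic factorization is obtained from \eqref{eq:q_kummer} together with exactly the same residue analysis, including the same witness $s = (n\bmod i)+1$ in the converse direction. No gaps.
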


\begin{proof}
	A closer look at Fact~\ref{fact:reduced_design} gives the first expression.
	The second equality is a consequence of the $q$-analog of Kummer's theorem (Equation~\eqref{eq:q_kummer}), which for each $s\in\{0,\ldots, t\}$ yields the $\Z[q]$-factorization
	\[
		\qbinomS{n-s}{k-s}=  \prod_{\substack{i\in\Z_{>0} \\ ((n-s) \bmod i) < ((k-s)\bmod i)}} \Phi_i\text{.}
	\]
	Now fix a value $i\in\Z_{>0}$.
	Integral division yields quotients $a_n, a_k\in\Z_{>0}$ and remainders $r_n, r_k\in\{0,\ldots,i-1\}$ such that $n = a_n i + r_n$ and $k = a_k i + r_k$.
	Note that $r_n = (n \bmod i)$ and $r_k = (k \bmod i)$.
	We claim that the following two statements are equivalent.
	\begin{enumerate}[(i)]
		\item\label{AAA} $((n-s) \bmod i) < ((k-s)\bmod i)$ for all $s\in\{0,\ldots,t\}$;
		\item\label{BBB} $t\leq (n \bmod i) < (k\bmod i)$.
	\end{enumerate}

	First assume that~\ref{BBB} is true and fix some $s\in\{0,\ldots,t\}$.
	Then $n-s = a_n i + (r_n - s)$ and $k-s = a_k i + (r_k - s)$ and we have $0 \leq r_n - t \leq r_n - s < r_k - s \leq i-1$.
	Hence $((n-s) \bmod i) = r_n - s < r_k - s = ((k-s) \bmod i)$.

	Now assume that~\ref{BBB} is false.
	If $(n \bmod i) \geq (k\bmod i)$, then $\ref{AAA}$ fails for $s = 0$.
	It remains to consider the case $(n \bmod i) < (k\bmod i)$ and $t > r_n$.
	We show that \ref{AAA} fails for $s = r_n+1 \leq t$.
	Here we have $n-s = a_n i + r_n - s = a_n i - 1 = (a_n - 1)i + (i-1)$ and thus $((n - s) \bmod i) = i-1 \geq ((k - s)\bmod i)$.
\end{proof}

\section{Algebraic and geometric properties}\label{sec:alg_geom}
Already in the original article, Cameron-Liebler sets $\mathcal{L} \subseteq \qbinomS{\F_q^4}{2}$ of lines have been characterized by several equivalent properties \cite[Prop.~3]{Cameron-Liebler-1982-LAA46:91-102}.%
\footnote{Extended and generalized characterizations are found in several other articles, like~\cite[Th.~1]{Penttila-1991-GeomDed37[3]:245-252}, \cite[Th.~3.7]{Rodgers-Storme-Vansweevelt-2018-Comb38[3]:739-757}, \cite[Th.~2.9]{Blokhuis-DeBoeck-DHaeseleer-2019-DCC87[8]:1839-1856}, and \cite[Prop.~10]{DeBeule-DHaeseleer-Ihringer-Mannaert-2023-ElecJComb30[1]:P1.31}.}
Property~(i) says that $\chi_{\mathcal{L}}$ is contained in $\im\phi_{\uparrow}^{(21)}$, and properties~(iv) and~(v) state that $\mathcal{L}$ has constant intersection with any (regular) line spread of $\PG(\F_q^4)$.
Intuitively, the former property is of algebraic and the latter ones are of geometric nature.
In the following, we want to find suitable generalizations of these properties.

In this section, let $V$ be a set or a $\F_q$-vector space of finite rank $n = \rk V$.
Moreover, we fix integers $k\in\{0,\ldots,n\}$ and $t\in \{0,\ldots,\min(k,n-k)\}$.
We define the space
\[
	\bar{V}^{(k)}_t \coloneqq \bar{V}_t \coloneqq \rowsp(W^{(tk)}) = \im \phi_{\uparrow}^{(kt)} \subseteq \R^{\qbinomS{V}{k}}\text{.}
\]
By Fact~\ref{fact:full_rank}, we have
\[
	\dim\bar{V}_t = \qbinomS{n}{t}\text{.}
\]
The natural generalization of the algebraic property is $f\in \bar{V}_t$.

\begin{remark}
In the language of association schemes, for $k \leq \frac{n}{2}$, the above definition precisely means that $f$ is a $t$-\emph{antidesign} in the ($q$-)Johnson scheme $J_q(n,k)$~\cite{Roos-1982-DelftProgrRep7[2]:98-109}.
As we will see, in this sense the section title \emph{algebraic and geometric properties} could really also be regarded as \emph{antidesigns and designs}.
\end{remark}

Searching for suitable generalizations of the geometric properties, we note that a line spread in $\PG(\F_q^4)$ is the same as a simple $1$-$(4,2,1)_q$ design.
For $\lambda\in\R$, we will denote the set of all real $t$-$(n,k,\lambda)_q$ designs by $U_{t,\lambda}$.
The $\lambda$-value of the complete $t$-design $\qbinomS{V}{k}$ is $\lambda_{\max} = \qbinomS{n-t}{k-t}$.

For the set of all real $t$-$(n,k,0)_q$ null designs we have $U_{t,0} = \ker W^{(tk)}$, so $U_{t,0} = \bar{V}_t^\perp$ and $\dim U_{t,0} = \qbinomS{n}{k} - \qbinomS{n}{t}$.
For fixed $\lambda\in \R$, the scaled complete design $\frac{\lambda}{\lambda_{\max}}\cdot \boldsymbol{1}$ clearly is a real $t$-$(n,k,\lambda)_q$ design.
Hence, as the space of all solutions $f$ of the linear equation system $W^{(tk)} f = \lambda\vek{1}$, the set of all real $t$-$(n,k,\lambda)_q$ designs is the affine subspace
\[
	U_{t,\lambda} = \frac{\lambda}{\lambda_{\max}}\cdot \vek{1} + U_{t,0}\text{.}
\]
So, $\delta$ is a real $t$-$(n,k,\lambda)_q$ design if and only if $\delta - \frac{\lambda}{\lambda_{\max}}\cdot \vek{1} \in U_{t,0}$.
By $U_{t,0} = \bar{V}_t^\perp$, this implies
\begin{equation}
	U_{t,\lambda} = \big\{\delta : \qbinomS{V}{k} \to \R \mid \langle f, \delta\rangle = \frac{\lambda}{\lambda_{\max}}\cdot\#f \text{ for all }f\in\bar{V}_t\big\}\text{.} \label{eq:designs_via_vbar}
\end{equation}
The space $U_{t,{\ast}}$ of all $t$-$(n,k,\lambda)_q$ designs with any value of $\lambda\in\R$ is $U_{t,{\ast}} \coloneqq U_{t,0} + \langle \vek{1}\rangle_{\R}$ of dimension $\dim U_{t,{\ast}} = \qbinomS{n}{k} - \qbinomS{n}{t} + 1$.
Its orthogonal space is $U_{t,{\ast}}^\perp = U_{t,0}^\perp \cap \langle\vek{1}\rangle_{\R}^\perp = \bar{V}_t \cap \langle\vek{1}\rangle_{\R}^\perp$, which equals the set of all $f\in\bar{V}_t$ with $\#f = 0$.
We will denote the $\lambda$-value of $\delta\in U_{t,{\ast}}$ by $\lambda_{\delta}$.

\begin{lemma}\label{lem:alg_geom_core}
	Let $\Delta\subseteq U_{t,{\ast}}$ and let
	\[
		F
		= \Big\{f : \qbinomS{V}{k} \to \R \mid \langle f, \delta\rangle = \frac{\lambda_{\delta}}{\lambda_{\max}} \cdot \# f \text{ for all }\delta\in \Delta\Big\}\text{.}
	\]
	Then $\bar{V}_t \subseteq F$ and $\dim F + \dim \langle \Delta \cup \{\vek{1}\}\rangle_{\R} = \qbinomS{n}{k} + 1$.
\end{lemma}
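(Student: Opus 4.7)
The plan is to reduce the defining condition of $F$ to a set of orthogonality constraints on $f$, and then to analyze the span of those constraints through a simple linear map.

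First, the inclusion $\bar{V}_t \subseteq F$ is immediate from equation \eqref{eq:designs_via_vbar}: every $\delta\in U_{t,\ast}$ lies in $U_{t,\lambda_\delta}$, so for any $f\in\bar{V}_t$ we have $\langle f,\delta\rangle = \tfrac{\lambda_\delta}{\lambda_{\max}}\#f$.

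Next, using $\#f = \langle f,\vek{1}\rangle$, I would rewrite the condition $\langle f,\delta\rangle = \tfrac{\lambda_\delta}{\lambda_{\max}}\#f$ as $\langle f,\,\delta - \tfrac{\lambda_\delta}{\lambda_{\max}}\vek{1}\rangle = 0$. Setting $\Delta' = \{\delta - \tfrac{\lambda_\delta}{\lambda_{\max}}\vek{1} : \delta\in\Delta\}$, this identifies $F$ with the orthogonal complement of $\langle\Delta'\rangle_{\R}$ inside $\R^{\qbinomS{V}{k}}$, hence $\dim F = \qbinomS{n}{k} - \dim\langle\Delta'\rangle_{\R}$.

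The remaining task is to compute $\dim\langle\Delta'\rangle_{\R}$ in terms of $\dim\langle\Delta\cup\{\vek{1}\}\rangle_{\R}$. Since the map $\delta\mapsto\lambda_\delta$ is linear on $U_{t,\ast}$ (it is simply evaluation of $\phi_\downarrow^{(tk)}(\delta)$ on any fixed $t$-subspace), the map
\[
\psi: U_{t,\ast}\to U_{t,0},\qquad \delta \mapsto \delta - \tfrac{\lambda_\delta}{\lambda_{\max}}\vek{1}
\]
is $\R$-linear. Its kernel consists of those $\delta$ proportional to $\vek{1}$, i.e.\ $\ker\psi = \langle\vek{1}\rangle_{\R}$. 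Noting that $\vek{1}$ is itself in $U_{t,\lambda_{\max}}$ so that $\psi(\vek{1}) = \vek{0}$, we see that $\psi$ sends $\langle\Delta\cup\{\vek{1}\}\rangle_{\R}$ onto $\langle\Delta'\rangle_{\R}$. Applying rank-nullity to the restriction of $\psi$ to $\langle\Delta\cup\{\vek{1}\}\rangle_{\R}$, whose kernel is exactly $\langle\vek{1}\rangle_{\R}$ (of dimension $1$), yields
\[
\dim\langle\Delta\cup\{\vek{1}\}\rangle_{\R} = 1 + \dim\langle\Delta'\rangle_{\R}.
\]
Combining with $\dim F = \qbinomS{n}{k} - \dim\langle\Delta'\rangle_{\R}$ gives the desired identity.

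There is no real obstacle here; the only point that needs a moment of care is verifying that $\psi$ is well-defined and linear, which hinges on the linearity of $\delta\mapsto\lambda_\delta$ on $U_{t,\ast}$, and on the fact that $\vek{1}$ itself is a $t$-design with $\lambda=\lambda_{\max}$ so that $\psi(\vek{1})=\vek{0}$.
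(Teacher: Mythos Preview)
Your proof is correct and follows essentially the same approach as the paper: both rewrite the defining condition as orthogonality to $\Delta' = \{\delta - \tfrac{\lambda_\delta}{\lambda_{\max}}\vek{1}\}$, identify $F = \langle\Delta'\rangle_{\R}^\perp$, and then relate $\dim\langle\Delta'\rangle_{\R}$ to $\dim\langle\Delta\cup\{\vek{1}\}\rangle_{\R}$. The only cosmetic difference is that the paper argues this last step directly (noting $\vek{1}\notin\langle\Delta'\rangle_{\R}$ since $\Delta'\subseteq U_{t,0}$, and $\langle\Delta'\cup\{\vek{1}\}\rangle_{\R}=\langle\Delta\cup\{\vek{1}\}\rangle_{\R}$), whereas you package the same observation as rank--nullity for the linear map $\psi$.
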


\begin{proof}
	The inclusion $\bar{V}_t \subseteq F$ follows from Equation~\ref{eq:designs_via_vbar}.

	We have $\langle f,\delta\rangle = \frac{\lambda_{\delta}}{\lambda_{\max}} \cdot \# f$ if and only if $\langle f, \delta - \frac{\lambda_{\delta}}{\lambda_{\max}}\cdot \vek{1}\rangle = 0$.
	Therefore, $F = (\Delta')^\perp$ where $\Delta' \coloneqq \{\delta - \frac{\lambda_{\delta}}{\lambda_{\max}}\cdot \vek{1} \mid \delta\in \Delta\}$.
	Hence $\dim F + \dim \langle\Delta'\rangle_{\R} = \dim\R^{\qbinomS{V}{k}} = \qbinomS{n}{k}$.
	Since all elements of $\Delta'$ are null designs, $\vek{1} \notin\langle\Delta'\rangle_{\R}$ and therefore $\dim\langle\Delta'\rangle_{\R} = \dim \langle\Delta' \cup\{\vek{1}\}\rangle_{\R} - 1$.
	The observation $\langle\Delta'\cup\{\vek{1}\}\rangle_{\R} = \langle\Delta\cup\{\vek{1}\}\rangle_{\R}$ concludes the proof.
\end{proof}

The above considerations are loosely known by the term \emph{design orthogonality}, attributed to Delsarte \cite{Delsarte-1977-PhilRR32[5-6]:373-411}, see also~\cite{Roos-1982-DelftProgrRep7[2]:98-109} for an accessible approach.
The following statement describes the essential correspondence between algebraic and geometric properties.

\begin{corollary}\label{cor:alg_geom_essence}
	Let $\Delta$ be a set of real $t$-$(n,k,{\ast})_q$ designs.
	Then the following are equivalent:
	\begin{enumerate}[(i)]
		\item\label{cor:alg_geom_essence:alg} $\bar{V}_t = \big\{f : \qbinomS{V}{k} \to \R \mid \langle f, \delta \rangle = \frac{\lambda_{\delta}}{\lambda_{\max}}\cdot \#f\text{ for all }\delta\in\Delta\big\}$.
		\item\label{cor:alg_geom_essence:geom} $\langle \Delta \cup \{\vek{1}\} \rangle_{\R} = U_{t,{\ast}}$.
	\end{enumerate}
	In particular,
\[
	\bar{V}_t = \big\{f : \qbinomS{V}{k} \to \R \mid \langle f, \delta\rangle = \frac{\lambda_\delta}{\lambda_{\max}} \cdot \#f \text{ for all }\delta\in U_{t,{\ast}}\big\}\text{.}\label{eq:deg_intersection}
\]
\end{corollary}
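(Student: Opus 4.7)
The plan is to deduce the corollary almost immediately from Lemma~\ref{lem:alg_geom_core} by a dimension comparison, noting that (i) is equality in the inclusion $\bar{V}_t\subseteq F$, while (ii) is equality in a corresponding inclusion $\langle \Delta\cup\{\vek{1}\}\rangle_\R \subseteq U_{t,{\ast}}$.

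First I would fix notation. Let $F$ be the right-hand side of~\ref{cor:alg_geom_essence:alg}, so Lemma~\ref{lem:alg_geom_core} applies: it gives $\bar{V}_t\subseteq F$ and the identity $\dim F + \dim\langle\Delta\cup\{\vek{1}\}\rangle_\R = \qbinomS{n}{k} + 1$. Since $\dim\bar{V}_t = \qbinomS{n}{t}$ by Fact~\ref{fact:full_rank}, condition~\ref{cor:alg_geom_essence:alg} is equivalent to $\dim F = \qbinomS{n}{t}$, hence by the displayed identity to
\[
	\dim\langle\Delta\cup\{\vek{1}\}\rangle_\R = \qbinomS{n}{k} - \qbinomS{n}{t} + 1\text{.}
\]
The right-hand side is exactly $\dim U_{t,{\ast}}$, as computed in the paragraph preceding the lemma. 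So the task reduces to checking that $\langle\Delta\cup\{\vek{1}\}\rangle_\R \subseteq U_{t,{\ast}}$; but this is clear from $\Delta\subseteq U_{t,{\ast}}$ together with $\vek{1}\in U_{t,{\ast}}$ (the constant function is the scaled complete design). Thus the dimensions agree iff the subspaces do, giving \ref{cor:alg_geom_essence:alg}$\Leftrightarrow$\ref{cor:alg_geom_essence:geom}.

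For the \enquote{in particular} statement, I take $\Delta = U_{t,{\ast}}$. Then $\langle\Delta\cup\{\vek{1}\}\rangle_\R = U_{t,{\ast}}$ holds trivially, so condition~\ref{cor:alg_geom_essence:geom} is satisfied and we read off the claimed equality from~\ref{cor:alg_geom_essence:alg}.

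I do not expect any serious obstacle: all the real work is already encapsulated in Lemma~\ref{lem:alg_geom_core} and in the dimension count for $U_{t,{\ast}}$. The only things to be careful about are (a) that $\vek{1}\in U_{t,{\ast}}$ so that adjoining it to $\Delta$ does not leave the ambient space, and (b) that $\lambda_{\vek{1}} = \lambda_{\max}$, so the condition $\langle f,\vek{1}\rangle = \frac{\lambda_{\vek{1}}}{\lambda_{\max}}\#f$ appearing implicitly in~\ref{cor:alg_geom_essence:alg} when one enlarges $\Delta$ by $\vek{1}$ is just the tautology $\#f = \#f$ — so enlarging $\Delta$ by $\vek{1}$ does not change $F$, which is what makes the symmetric roles of $\Delta$ and $\Delta\cup\{\vek{1}\}$ work out cleanly.
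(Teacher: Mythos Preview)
Your proof is correct and follows exactly the paper's approach: the paper's own proof simply says the statement is a direct consequence of Lemma~\ref{lem:alg_geom_core} combined with $\dim\bar{V}_t = \qbinomS{n}{t}$ and $\dim U_{t,{\ast}} = \qbinomS{n}{k} - \qbinomS{n}{t} + 1$, which is precisely the dimension comparison you spell out. Your extra remarks about $\vek{1}\in U_{t,{\ast}}$ and the tautology for $\delta=\vek{1}$ are correct minor clarifications.
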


\begin{proof}
	This is a direct consequence of Lemma~\ref{lem:alg_geom_core} combined with $\dim\bar{V}_t = \qbinomS{n}{t}$ and $\dim U_{t,{\ast}} = \qbinomS{n}{k} - \qbinomS{n}{t} + 1$.
\end{proof}

To apply Corollary~\ref{cor:alg_geom_essence}, we need suitable sets $\Delta$.

\begin{lemma}\label{lem:delta}
	Let $\Delta$ be one of the following:
	\begin{enumerate}[(a)]
		\item\label{lem:delta:0} The set of all signed $t$-$(n,k,0)_q$ null designs or
		\item\label{lem:delta:nonsimple} the set of all possibly non-simple $t$-$(n,k,\lambda)_q$ designs with $\lambda\in\N$.
	\end{enumerate}
	Then $U_{t,{\ast}} = \langle\Delta \cup\{\vek{1}\}\rangle$.
\end{lemma}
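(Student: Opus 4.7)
The inclusion $\langle\Delta\cup\{\vek{1}\}\rangle_\R \subseteq U_{t,{\ast}}$ is immediate in both cases: $\vek{1}$ is the characteristic function of the complete $t$-$(n,k,\lambda_{\max})_q$ design, and every element of $\Delta$ is by definition a (real) $t$-$(n,k,{\ast})_q$ design. So the content of the lemma is the reverse inclusion, and my plan is to reduce both parts to the fact that the integer lattice of null designs spans $U_{t,0}$ over $\R$.

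For part~\ref{lem:delta:0}, I would argue as follows. The incidence matrix $W^{(tk)}$ has integer entries, so $U_{t,0} = \ker W^{(tk)}$ is the real solution space of a homogeneous linear system with integer coefficients. By elementary linear algebra (e.g., row reduction over $\Q$ followed by clearing denominators, or Smith normal form), this kernel admits a basis of integer vectors. Integer vectors in $\ker W^{(tk)}$ are precisely the signed $t$-$(n,k,0)_q$ null designs, so $\langle \Delta\rangle_\R = U_{t,0}$, and adjoining $\vek{1}$ yields $U_{t,{\ast}} = U_{t,0} + \R\vek{1}$ as desired.

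For part~\ref{lem:delta:nonsimple}, I would use part~\ref{lem:delta:0} as a stepping stone. Given any signed null design $\eta \in U_{t,0} \cap \Z^{\qbinomS{V}{k}}$, choose $c \in \N$ with $c \geq -\min_{K} \eta(K)$. Then the function $f \coloneqq \eta + c\vek{1} : \qbinomS{V}{k} \to \N$ is non-negative and integer-valued, and applying $W^{(tk)}$ gives $W^{(tk)}f = c\, W^{(tk)}\vek{1} = c\lambda_{\max}\vek{1}$, so $f$ is a possibly non-simple $t$-$(n,k,c\lambda_{\max})_q$ design, i.e., $f \in \Delta$. Writing $\eta = f - c\vek{1}$ exhibits $\eta$ as an $\R$-linear combination of elements of $\Delta \cup\{\vek{1}\}$. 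Combined with part~\ref{lem:delta:0}, this gives $U_{t,0} \subseteq \langle\Delta\cup\{\vek{1}\}\rangle_\R$, hence $U_{t,{\ast}} \subseteq \langle \Delta\cup\{\vek{1}\}\rangle_\R$.

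The only genuine obstacle is the integrality step in part~\ref{lem:delta:0}, namely knowing that the real kernel of an integer matrix is spanned by its integer vectors; everything else is a short bookkeeping argument. Note that part~\ref{lem:delta:nonsimple} is strictly more delicate than part~\ref{lem:delta:0} only in that the elements of $\Delta$ must be \emph{non-negative}, and the trick above (shifting by a large multiple of $\vek{1}$) is precisely what handles this without disturbing the design property.
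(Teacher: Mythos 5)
Your proposal is correct and follows essentially the same route as the paper: for part~(a), obtain an integral basis of $U_{t,0}=\ker W^{(tk)}$ by clearing denominators of a rational basis, and for part~(b), shift null designs by a suitable non-negative multiple of $\vek{1}$ to land in the set of possibly non-simple designs. The only cosmetic difference is that you shift an arbitrary integral null design while the paper shifts only the elements of a fixed integral basis; both yield the same spanning argument.
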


\begin{proof}
Part~\ref{lem:delta:0}:
Since the entries of $W^{(tk)}$ are rational, the space $U_{t,0} = \ker W^{(tk)}$ has a rational basis.
Multiplication with common denominators yields an integral basis.
Hence $\Delta$ contains a basis of $U_{t,0}$ and thus $\langle \Delta\cup\{\vek{1}\}\rangle = U_{t,{\ast}}$.

Part~\ref{lem:delta:nonsimple}:
By the above argument, there exists an integral basis $B$ of $U_{t,0}$.
Now replace every element $\vek{v}\in B$ by $\vek{v}' = \vek{v} + a\vek{1}$ with a suitable integer $a$ such that all the entries of $\vek{v}'$ are non-negative.
The resulting set $B'$ of vectors is contained in $\Delta$ and has the property $\langle B' \cup\{\vek{1}\} \rangle = U_{t,{\ast}}$.
\end{proof}

\pagebreak

\begin{theorem}\label{thm:alg_geom}
	Let $f : \qbinomS{V}{k} \to \R$.
	The following are equivalent.\\[0.5\baselineskip]
	\noindent\emph{Algebraic property:}
	\begin{enumerate}[(i)]
		\item\label{thm:alg_geom:alg} $f \in \bar{V}_t$.
	\end{enumerate}
	\emph{Geometric properties:}
	\begin{enumerate}[resume*]
		\item\label{thm:alg_geom:geom_full} There is a constant $c\in\R$ such that $\langle f, \delta\rangle = \lambda c$ for all real $t$-$(n,k,\lambda)_q$ designs with $\lambda\in\R$.
		\item\label{thm:alg_geom:geom_null} $\langle f, \delta\rangle = 0$ for all signed $t$-$(n,k,0)_q$ null designs $\delta : \qbinomS{V}{k} \to \Z$.
		\item\label{thm:alg_geom:geom_nonsimple} There is a constant $c\in\R$ such that $\langle f, \delta\rangle = \lambda c$ for all possibly non-simple $t$-$(n,k,\lambda)_q$ designs $\delta : \qbinomS{V}{k} \to \N$ with $\lambda\in\N$.
	\end{enumerate}
	The constant in properties~\ref{thm:alg_geom:geom_full} and~\ref{thm:alg_geom:geom_nonsimple} necessarily equals $c = \frac{1}{\lambda_{\max}}\cdot \#f$.
\end{theorem}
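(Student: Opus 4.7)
The plan is to prove the theorem by establishing the chain \ref{thm:alg_geom:alg} $\Rightarrow$ \ref{thm:alg_geom:geom_full} $\Rightarrow$ \ref{thm:alg_geom:geom_null}, \ref{thm:alg_geom:geom_nonsimple} together with the two backward implications \ref{thm:alg_geom:geom_null} $\Rightarrow$ \ref{thm:alg_geom:alg} and \ref{thm:alg_geom:geom_nonsimple} $\Rightarrow$ \ref{thm:alg_geom:alg}. The whole argument will rest on pairing Corollary~\ref{cor:alg_geom_essence} with the two concrete choices for $\Delta$ supplied by Lemma~\ref{lem:delta}, so no new algebra is really needed beyond organizing the implications and tracking the constant $c$.

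For the forward direction I would first invoke Equation~\eqref{eq:designs_via_vbar}: if $f\in\bar{V}_t$ and $\delta$ is any real $t$-$(n,k,\lambda)_q$ design, then $\langle f,\delta\rangle = \frac{\lambda}{\lambda_{\max}}\,\#f$, establishing \ref{thm:alg_geom:geom_full} with the explicit constant $c = \#f/\lambda_{\max}$. The implication \ref{thm:alg_geom:geom_full} $\Rightarrow$ \ref{thm:alg_geom:geom_null} is immediate since a signed null design is in particular a real $t$-$(n,k,0)_q$ design, and \ref{thm:alg_geom:geom_full} $\Rightarrow$ \ref{thm:alg_geom:geom_nonsimple} is immediate since possibly non-simple designs are a subclass of real designs.

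For the two backward implications I would apply Corollary~\ref{cor:alg_geom_essence}. To show \ref{thm:alg_geom:geom_null} $\Rightarrow$ \ref{thm:alg_geom:alg}, I take $\Delta$ to be the set of all signed $t$-$(n,k,0)_q$ null designs; here $\lambda_\delta=0$ for each $\delta\in\Delta$, so the condition $\langle f,\delta\rangle = \frac{\lambda_\delta}{\lambda_{\max}}\#f$ reduces exactly to \ref{thm:alg_geom:geom_null}, and Lemma~\ref{lem:delta}\ref{lem:delta:0} delivers $\langle\Delta\cup\{\vek{1}\}\rangle_\R = U_{t,\ast}$, so Corollary~\ref{cor:alg_geom_essence} yields $f\in\bar{V}_t$. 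For \ref{thm:alg_geom:geom_nonsimple} $\Rightarrow$ \ref{thm:alg_geom:alg}, I take $\Delta$ to be all possibly non-simple $t$-$(n,k,\lambda)_q$ designs with $\lambda\in\N$. The one subtle point, and also the moral main obstacle, is identifying the constant $c$ appearing in \ref{thm:alg_geom:geom_nonsimple} with $\#f/\lambda_{\max}$; this is forced by testing the hypothesis on the complete design $\delta=\vek{1}\in\Delta$, which has $\lambda=\lambda_{\max}$ and yields $\#f=\langle f,\vek{1}\rangle=\lambda_{\max}\,c$. With $c$ so identified, the hypothesis becomes $\langle f,\delta\rangle = \frac{\lambda_\delta}{\lambda_{\max}}\#f$ for all $\delta\in\Delta$, and Lemma~\ref{lem:delta}\ref{lem:delta:nonsimple} combined with Corollary~\ref{cor:alg_geom_essence} closes the loop. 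The same trick of substituting $\delta=\vek{1}$ also fixes $c=\#f/\lambda_{\max}$ in \ref{thm:alg_geom:geom_full}, yielding the final assertion of the theorem.
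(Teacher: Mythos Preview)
Your proof is correct and follows essentially the same approach as the paper: identify the constant $c$ by evaluating at the complete design $\vek{1}$, then derive the equivalences from Corollary~\ref{cor:alg_geom_essence} together with the two choices of $\Delta$ supplied by Lemma~\ref{lem:delta}. The paper's version is more terse (it simply cites these two results for all three equivalences at once, using the ``In particular'' clause of Corollary~\ref{cor:alg_geom_essence} for \ref{thm:alg_geom:geom_full}), whereas you spell out the chain of implications and invoke Equation~\eqref{eq:designs_via_vbar} for the forward direction, but the substance is identical.
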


\begin{proof}
Applying properties~\ref{thm:alg_geom:geom_full} and~\ref{thm:alg_geom:geom_nonsimple} to the complete $t$-$(n,k,\lambda_{\max})_q$ design (which is a simple design), we see that the constant equals $c = \frac{1}{\lambda_{\max}}\cdot \#f$.
The equivalence of property~\ref{thm:alg_geom:alg} and the three geometric properties follows from Corollary~\ref{cor:alg_geom_essence} and Lemma~\ref{lem:delta}.
\end{proof}

\begin{remark}
	It feels tempting to restrict the geometric property \ref{thm:alg_geom:geom_nonsimple} to the set $\Delta$ of all \emph{simple} designs $\delta : \qbinomS{V}{k} \to \{0,1\}$.
	By Corollary~\ref{cor:alg_geom_essence}, this is possible exactly for those parameter tuples $(q,n,k,t)$ such that the \emph{richness condition} $\langle\Delta\rangle_{\R} = U_{t,{\ast}}$ is met (note that $\vek{1}\in\Delta$).

	Unfortunately, this condition does not hold in all cases.
	For a counterexample, consider $q=1$, $n = 10$, $k = 5$ and $t=4$.
	Here $\lambda_{\min} = \lambda_{\max} = 6$, so the only simple $4$-$(10,5,\lambda)_1$ designs are the empty ($\lambda = 0$) and the complete ($\lambda = 6$) design.
	This implies $\dim \langle\Delta \cup \{\vek{1}\}\rangle_{\R} = 1$, which is too small.
\end{remark}

\section{Weights and degree}\label{sec:weights_degree}
In this section, let $V$ be either a set or a finite $\F_q$-vector space of finite rank.
The rank of $V$ will be denoted by $n = \rk V$.

Let $i\in\{0,\ldots,\min(k,n-k)\}$.
(Equivalently, $i$ is a non-negative integer with $i \leq k \leq n-i$.)
The set of $i$-pencils $\mathcal{X}_i^{(k)} = \{\boldsymbol{x}_I^{(k)} \mid I\in\qbinomS{V}{i}\}$ spans the row space
\[
	\bar{V}^{(k)}_i = \rowsp(W^{(ik)}) = \im \phi_{\uparrow}^{(ki)} \subseteq \R^{\qbinomS{V}{k}}\text{.}
\]
By Fact~\ref{fact:full_rank}, $\dim\bar{V}^{(k)}_i = \qbinomS{n}{i}$ and $\mathcal{X}_i^{(k)}$ is a basis of $\bar{V}^{(k)}_i$.
Therefore, each function $f\in \bar{V}^{(k)}_i$ is a unique linear combination $f = \sum_{I\in\qbinomS{V}{i}} a_I \boldsymbol{x}^{(k)}_I$ of $i$-pencils.
In this case, the coefficients $a_I\in\R$ will be called the \emph{weights} or the \emph{$i$-weights} of $f$ (or of $\mathcal{F}$ in the case $f = \chi_{\mathcal{F}}$), and $\wt^{(i)}_f : \qbinomS{V}{i} \to \R$, $I \mapsto a_I$ is the \emph{$i$-weight distribution} $\wt^{(i)}_f$ of $f$ (or of $\mathcal{F}$), which may also be denoted as the sequence $(a_I)_{I\in\qbinomS{V}{i}}$ .
It holds that $\phi_\uparrow^{(ki)}(\wt^{(i)}_f) = f$.
Note that for $\Q$-valued functions $f\in\bar{V}^{(k)}_i$, also the $i$-weights are in $\Q$.

\begin{lemma}\label{lem:vi_chain}
Let $k\in\{0,\ldots,n\}$.
Then
\[
	\langle\boldsymbol{1}\rangle = \bar{V}^{(k)}_0 \subseteq \bar{V}^{(k)}_1 \subseteq \ldots \subseteq \bar{V}^{(k)}_{\min(k,n-k)} = \R^{\qbinomS{V}{k}}\text{,}
\]
where $\boldsymbol{1}$ denotes the all-one function $K\mapsto 1$.

For integers $0\leq i\leq j \leq \min(k,n-k)$ and $f\in \bar{V}^{(k)}_i$, the $j$-weights of $f$ are given by
\[
	\wt^{(j)}_f = \qbinomS{k-i}{j-i}^{-1} \phi_\uparrow^{(ji)}(\wt_f^{(i)})\text{.}
\]
\end{lemma}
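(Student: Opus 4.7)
The plan is to deduce both statements from the composition formula of Lemma~\ref{lem:phi_compose} applied to the up-maps $\phi_\uparrow$, together with the injectivity of $\phi_\uparrow^{(kj)}$ for $j \leq \min(k,n-k)$ guaranteed by Fact~\ref{fact:full_rank}.

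For the chain, the leftmost equality is immediate: since $\qbinomS{V}{0} = \{\vek 0\}$ and the unique $0$-pencil is $\vek{x}_{\vek 0}^{(k)} = \vek 1$, we have $\bar V_0^{(k)} = \im \phi_\uparrow^{(k0)} = \langle \vek 1\rangle_\R$. For the successive inclusions, fix $0 \leq i \leq j \leq \min(k,n-k)$. Lemma~\ref{lem:phi_compose} (applied with $x=i$, $y=j$, $z=k$) gives
\[
    \phi_\uparrow^{(kj)} \circ \phi_\uparrow^{(ji)} = \qbinomS{k-i}{j-i}\, \phi_\uparrow^{(ki)}.
\]
Since $0 \leq j-i \leq k-i$ ensures $\qbinomS{k-i}{j-i} \neq 0$, this identity yields $\im \phi_\uparrow^{(ki)} \subseteq \im \phi_\uparrow^{(kj)}$, i.e.\ $\bar V_i^{(k)} \subseteq \bar V_j^{(k)}$. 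Finally, the rightmost equality follows from a dimension count: by Fact~\ref{fact:full_rank}, $\dim \bar V_{\min(k,n-k)}^{(k)} = \qbinomS{n}{\min(k,n-k)} = \qbinomS{n}{k} = \dim \R^{\qbinomS{V}{k}}$, and together with the already established inclusion this forces equality.

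For the weight formula, observe that by the definition of the $i$-weight distribution we have $f = \phi_\uparrow^{(ki)}(\wt_f^{(i)})$. Using the composition identity above,
\[
    \phi_\uparrow^{(kj)}\!\left( \qbinomS{k-i}{j-i}^{-1} \phi_\uparrow^{(ji)}(\wt_f^{(i)}) \right)
    = \qbinomS{k-i}{j-i}^{-1} \cdot \qbinomS{k-i}{j-i}\, \phi_\uparrow^{(ki)}(\wt_f^{(i)})
    = f.
\]
So $\qbinomS{k-i}{j-i}^{-1} \phi_\uparrow^{(ji)}(\wt_f^{(i)})$ is a $\phi_\uparrow^{(kj)}$-preimage of $f$. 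Since $j \leq \min(k,n-k)$ translates to $j \leq k \leq n-j$, Fact~\ref{fact:full_rank} says $\phi_\uparrow^{(kj)}$ is injective, so this preimage is unique, hence equal to $\wt_f^{(j)}$.

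I do not expect a real obstacle here; the argument is pure bookkeeping once Lemma~\ref{lem:phi_compose} and Fact~\ref{fact:full_rank} are in hand. The only minor points to verify are that the coefficient $\qbinomS{k-i}{j-i}$ is invertible (which reduces to $0 \leq j-i \leq k-i$) and the degenerate case $i = j$, in which $\qbinomS{k-i}{0} = 1$ and $\phi_\uparrow^{(ii)}$ is the identity, so the formula becomes $\wt_f^{(i)} = \wt_f^{(i)}$.
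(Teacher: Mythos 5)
Your proof is correct and follows essentially the same route as the paper: the composition identity of Lemma~\ref{lem:phi_compose} with $(x,y,z)=(i,j,k)$ yields both the inclusions $\bar V_i^{(k)}\subseteq\bar V_j^{(k)}$ and the weight formula, with the endpoints handled via $\boldsymbol{x}_{\boldsymbol{0}}^{(k)}=\boldsymbol{1}$ and the dimension count from Fact~\ref{fact:full_rank}. The paper's proof is just a more compressed version of the same bookkeeping; your added remarks on the nonvanishing of $\qbinomS{k-i}{j-i}$ and the injectivity of $\phi_\uparrow^{(kj)}$ make explicit what the paper leaves implicit.
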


\begin{proof}
The equality $\bar{V}^{(k)}_0 = \langle\boldsymbol{1}\rangle$ follows from $\boldsymbol{x}_{\boldsymbol{0}}^{(k)} = \boldsymbol{1}$, and the equality $\bar{V}^{(k)}_{\min(k,n-k)} = \R^{\qbinomS{V}{k}}$ from $\dim \bar{V}^{(k)}_{\min(k,n-k)} = \qbinomS{n}{k} = \dim \R^{\qbinomS{V}{k}}$.

For integers $0\leq i\leq j \leq \min(k,n-k)$ and $f\in \bar{V}^{(k)}_i$, we have
\[
	f
	= \phi_\uparrow^{(ki)}(\wt^{(i)}_f)
	= \qbinomS{k-i}{j-i}^{-1}\,\phi_\uparrow^{(kj)}(\phi_\uparrow^{(ji)}(\wt_f^{(i)}))\text{.}
\]
\end{proof}

\begin{remark}\label{rem:vi_vs_johnson}
	Equipping the ambient space with the standard inner product, each $\bar{V}_{t-1}$ has a unique orthogonal complement $V_{t}$ in $\bar{V}_t$ (where $t \in \{0,\ldots,\min(k,n-k)\}$, assuming that $\bar{V}_{-1} = \{\vek{0}\}$).
	In this way, the chain
	\[
		\langle\boldsymbol{1}\rangle = \bar{V}^{(k)}_0 \subseteq \bar{V}^{(k)}_1 \subseteq \ldots \subseteq \bar{V}^{(k)}_{\min(k,n-k)} = \R^{\qbinomS{V}{k}}
	\]
	of row spaces of Lemma~\ref{lem:vi_chain} yields the orthogonal decomposition
	\[
		\bar{V}_t = V_0 \perp \ldots \perp V_t\text{.}
	\]
	and in particular
	\[
		\R^{\qbinomS{V}{k}} = V_0 \perp V_1 \perp \ldots \perp V_{\min(k,n-k)}\text{.}
	\]
	In the case $k \leq \frac{n}{2}$, this is exactly the decomposition of the Johnson (for $q=1$) or $q$-Johnson (for $q \geq 2$) scheme $J_q(n,k)$ into (suitably ordered) orthogonal eigenspaces.
	The $q$-Johnson scheme is a $k$-class association scheme on the base set $\qbinomS{V}{k}$.
	For a deeper introduction to the theory of association schemes, the reader is referred to the textbooks \cite{Bannai-Bannai-Ito-Tanaka-2021,Brouwer-Cohen-Neumaier-1989}.
\end{remark}

Lemma~\ref{lem:vi_chain} motivates the definition of the \emph{degree} $\deg(f) = \deg_V(f)$ of a function $f : \qbinomS{V}{k} \to \R$, $f\neq\vek{0}$, as the smallest number $t\in\{0,\ldots,\min(k,n-k)\}$ such that $f\in \bar{V}^{(k)}_t$.%
\footnote{In existing definitions in the literature like in \cite{DeBeule-DHaeseleer-Ihringer-Mannaert-2023-ElecJComb30[1]:P1.31}, the property of $t$ being the \emph{smallest} possible integer may be missing.
We decided to use this refined definition since we feel that the degree should be a unique number.
In this way, we get an analogous setting to the degree of a polynomial.}
Like for polynomials, the degree of the zero function is defined as $\deg(\vek{0}) = -\infty$.
In other words, $\bar{V}^{(k)}_t$ is the set of all functions $\qbinomS{V}{k} \to \R$ of degree at most $t$.
By Lemma~\ref{lem:vi_chain}, a function $f$ is of degree $0$ if and only if $f$ is constant and non-zero.

\begin{lemma}\label{lem:deg_operations}
	Let $k\in\{0,\ldots,n\}$.
	Let $f,g : \qbinomS{V}{k} \to \R$ and $\lambda\in\R$.
	Then
	\begin{enumerate}[(a)]
		\item\label{lem:deg_operations:scalar_mul} $\deg(\lambda f) \leq \deg(f)$ with equality if and only if $\lambda \neq 0$ or $f = \vek{0}$.
		\item\label{lem:deg_operations:sum} $\deg(f \pm g) \leq \max(\deg(f), \deg(g))$. Whenever $\deg(f) \neq \deg(g)$, we have equality.
		\item\label{lem:deg_operations:prod} $\deg(fg) \leq \deg(f) + \deg(g)$.
	\end{enumerate}
\end{lemma}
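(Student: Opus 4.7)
The plan is to use the defining characterization: for a nonzero $h$, $\deg(h) \leq t$ is equivalent to $h \in \bar{V}^{(k)}_t$, where the latter form a nested chain of linear subspaces by Lemma~\ref{lem:vi_chain}. This immediately makes parts~\ref{lem:deg_operations:scalar_mul} and~\ref{lem:deg_operations:sum} elementary linear algebra: since $\bar{V}^{(k)}_{\deg f}$ is a subspace, $\lambda f \in \bar{V}^{(k)}_{\deg f}$ and $f \pm g \in \bar{V}^{(k)}_{\max(\deg f,\deg g)}$, yielding the two stated inequalities. The equality statements would be handled by playing the bound both ways: for~\ref{lem:deg_operations:scalar_mul}, whenever $\lambda \neq 0$ applying the inequality to $\lambda^{-1}$ and $\lambda f$ gives $\deg(f) \leq \deg(\lambda f)$, so the only genuine drop happens in the corner case $\lambda = 0$ with $f \neq \vek{0}$. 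For~\ref{lem:deg_operations:sum}, if say $\deg f < \deg g = t$ but $\deg(f \pm g) < t$, then $g = \pm\bigl((f \pm g) - f\bigr)$ would lie in $\bar{V}^{(k)}_{t-1}$, contradicting $\deg g = t$.

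The substantive part is~\ref{lem:deg_operations:prod}. Assuming $f, g \neq \vek{0}$ (otherwise both sides are $-\infty$) and setting $s = \deg f$, $t = \deg g$, I would expand $f$ and $g$ in the pencil bases $\mathcal{X}^{(k)}_s$ and $\mathcal{X}^{(k)}_t$ of $\bar{V}^{(k)}_s$ and $\bar{V}^{(k)}_t$. Writing $f = \sum_{I} a_I\, \boldsymbol{x}^{(k)}_I$ with $\rk I = s$ and $g = \sum_{J} b_J\, \boldsymbol{x}^{(k)}_J$ with $\rk J = t$, the multiplicativity of pencils recorded in the preliminaries gives
\[
	fg = \sum_{I, J} a_I b_J\, \boldsymbol{x}^{(k)}_{I \vee J},
\]
where $I \vee J$ denotes the lattice join ($I \cup J$ in the set case, $I + J$ in the subspace case), which is direct from the definition of $\boldsymbol{x}^{(k)}_U$ as the characteristic function of $\{K \in \qbinomS{V}{k} : U \leq K\}$. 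Each summand either vanishes (when $\rk(I \vee J) > k$) or is a pencil indexed by a subspace of rank $\rk(I \vee J) \leq s + t$, placing $fg$ in $\bar{V}^{(k)}_{s+t}$ as long as $s + t \leq \min(k, n-k)$. The remaining case $s + t > \min(k, n-k)$ is immediate, because every function on $\qbinomS{V}{k}$ has degree at most $\min(k, n-k)$ by Lemma~\ref{lem:vi_chain}.

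I do not foresee any real obstacle; the only conceptual input specific to~\ref{lem:deg_operations:prod} is the subadditivity of rank under the lattice join, which enters through the pencil product formula, and the rest is bookkeeping around the $\deg(\vek{0}) = -\infty$ convention and the truncation of the chain at $\min(k,n-k)$.
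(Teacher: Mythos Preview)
Your proposal is correct and follows essentially the same strategy as the paper for all three parts: parts~\ref{lem:deg_operations:scalar_mul} and~\ref{lem:deg_operations:sum} are handled via the nested chain of subspaces from Lemma~\ref{lem:vi_chain}, and part~\ref{lem:deg_operations:prod} by expanding $f$ and $g$ in pencil bases and using $\boldsymbol{x}^{(k)}_I \cdot \boldsymbol{x}^{(k)}_J = \boldsymbol{x}^{(k)}_{I+J}$.

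The one noteworthy difference concerns the justification of $\deg \boldsymbol{x}^{(k)}_{I+J} \leq \rk(I+J)$ in part~\ref{lem:deg_operations:prod}. The paper postpones this proof to the end of Section~\ref{sec:weights_degree:ambient} and then invokes Corollary~\ref{cor:deg_x_xbar} (whose proof passes through Theorems~\ref{thm:change_ambient_P}, \ref{thm:change_ambient_H} and~\ref{thm:deg_f}). You instead observe directly that if $r = \rk(I\vee J) \leq s+t \leq \min(k,n-k)$ then $\boldsymbol{x}^{(k)}_{I\vee J}$ is one of the defining generators of $\bar{V}^{(k)}_r \subseteq \bar{V}^{(k)}_{s+t}$, while the case $s+t > \min(k,n-k)$ is covered by the trivial upper bound on all degrees. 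Your argument is therefore self-contained and shows that the paper's forward reference is not actually needed for this lemma.
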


\begin{proof}
	The first two statements follow from the fact that $\bar{V}^{(k)}_i$ is an $\R$-vector space, and from Lemma~\ref{lem:vi_chain}.
	For the equality statement in~\ref{lem:deg_operations:sum}, without loss of generality $\deg(g) \leq \deg(f)$ and hence $\deg(f\pm g) \leq \max(\deg(f), \deg(g)) = \deg(f)$.
	From $\deg(f) = \deg((f\pm g) \mp g) \leq \max(\deg(f\pm g), \deg(g))$ we see that we can't have $\deg(f \pm g) < \deg(f)$ and $\deg(g) < \deg(f)$ at the same time.

	The proof of the third statement will be given later at the end of Section~\ref{sec:weights_degree:ambient}, when we have the required basics.
\end{proof}

We remark that in Lemma~\ref{lem:deg_operations}, the strict inequalities $\deg(f + g) < \min(\deg(f),\deg(g))$ and $\deg (fg) < \deg(f) + \deg(g)$ may occur.
For the first inequality, consider $\chi_{\mathcal{F}} + \chi_{\mathcal{F}^\complement} = \boldsymbol{1}$ and for the second inequality, consider the idempotent functions $\chi_{\mathcal{F}} \cdot \chi_{\mathcal{F}} = \chi_{\mathcal{F}}$.

Now let $\mathcal{F}\subseteq \qbinomS{V}{k}$ be a set of $k$-subspaces.
The elements of $\mathcal{F}$ will be called \emph{blocks}.
For simplicity, we may identify a set $\mathcal{F}$ with its characteristic Boolean function $\chi_{\mathcal{F}}$.
In this way, we get the \emph{degree} $\deg\mathcal{F} = \deg_{\chi_\mathcal{F}}\in\{0,\ldots,\min(k,n-k)\}$ and (for all $i\in\{0,\ldots,n\}$ it exists) the \emph{weight distribution} $\wt^{(i)}_\mathcal{F} = \wt^{(i)}_{\chi_\mathcal{F}}$ of the set $\mathcal{F}$.
We have $\deg(\emptyset) = -\infty$.
Clearly, $\qbinomS{V}{k}$ is the only set of degree $0$.
We define the \emph{supplementary} set of $\mathcal{F}$ as $\mathcal{F}^\complement = \qbinomS{V}{k} \setminus \mathcal{F}$.%
\footnote{We prefer \enquote{supplement} over \enquote{complement}, since this is the established terminology in design theory.}

\begin{lemma}\label{lem:set_op}
	Let $\mathcal{F},\mathcal{F}_1, \mathcal{F}_2\subseteq\qbinomS{V}{k}$.
	\begin{enumerate}[(a)]
		\item\label{lem:set_op:meet_join}
		$\deg(\mathcal{F}_1 \cap \mathcal{F}_2) \leq \deg\mathcal{F}_1 + \deg\mathcal{F}_2$ and $\deg(\mathcal{F}_1 \cup \mathcal{F}_2) \leq \deg\mathcal{F}_1 + \deg\mathcal{F}_2$.
		\item\label{lem:set_op:disjoint_join}
		If $\mathcal{F}_1 \cap \mathcal{F}_2 = \emptyset$, then $\deg(\mathcal{F}_1 \cup \mathcal{F}_2) \leq \max(\deg\mathcal{F}_1,\deg\mathcal{F}_2)$.\\
		Whenever $\deg\mathcal{F}_1 \neq \deg\mathcal{F}_2$, we have equality.
		\item\label{lem:set_op:diff}
		If $\mathcal{F}_1 \subseteq \mathcal{F}_2$, then $\deg(\mathcal{F}_2 \setminus \mathcal{F}_1) \leq \max(\deg\mathcal{F}_1,\deg\mathcal{F}_2)$.\\
		Whenever $\deg\mathcal{F}_1 \neq \deg\mathcal{F}_2$, we have equality.
		\item\label{lem:set_op:supplement}
		If $\mathcal{F}$ is not one of the border cases $\emptyset$ and $\qbinomS{V}{k}$, then $\deg(\mathcal{F}^\complement) = \deg(\mathcal{F})$.
	\end{enumerate}
\end{lemma}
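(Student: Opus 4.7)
My plan is to translate each set operation into an arithmetic identity for characteristic functions and then feed the result into Lemma~\ref{lem:deg_operations}. The three identities I will use are $\chi_{\mathcal{F}_1\cap\mathcal{F}_2} = \chi_{\mathcal{F}_1}\chi_{\mathcal{F}_2}$, $\chi_{\mathcal{F}_1\cup\mathcal{F}_2} = \chi_{\mathcal{F}_1}+\chi_{\mathcal{F}_2}-\chi_{\mathcal{F}_1}\chi_{\mathcal{F}_2}$, and $\chi_{\mathcal{F}^\complement} = \vek{1}-\chi_{\mathcal{F}}$. Since every claim is about degrees and Lemma~\ref{lem:deg_operations} already controls degrees under sums, products, and scalar multiples, the entire statement will reduce to a bookkeeping check on a small number of degree bounds.

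For part~\ref{lem:set_op:meet_join}, the intersection bound is Lemma~\ref{lem:deg_operations}\ref{lem:deg_operations:prod} applied verbatim to $\chi_{\mathcal{F}_1}\chi_{\mathcal{F}_2}$. The union bound then follows from inclusion–exclusion: combining parts~\ref{lem:deg_operations:sum} and~\ref{lem:deg_operations:prod} of Lemma~\ref{lem:deg_operations} on $\chi_{\mathcal{F}_1}+\chi_{\mathcal{F}_2}-\chi_{\mathcal{F}_1}\chi_{\mathcal{F}_2}$, and using $\deg\mathcal{F}_i \leq \deg\mathcal{F}_1+\deg\mathcal{F}_2$ for $i\in\{1,2\}$ to absorb the individual summands into the product bound (the degenerate case where some $\mathcal{F}_i$ is empty is trivial). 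Parts~\ref{lem:set_op:disjoint_join} and~\ref{lem:set_op:diff} rely only on Lemma~\ref{lem:deg_operations}\ref{lem:deg_operations:sum}: disjointness kills the inclusion–exclusion correction so that $\chi_{\mathcal{F}_1\cup\mathcal{F}_2} = \chi_{\mathcal{F}_1}+\chi_{\mathcal{F}_2}$, while $\mathcal{F}_1\subseteq\mathcal{F}_2$ yields $\chi_{\mathcal{F}_2\setminus\mathcal{F}_1} = \chi_{\mathcal{F}_2}-\chi_{\mathcal{F}_1}$; in both situations the equality clause of Lemma~\ref{lem:deg_operations}\ref{lem:deg_operations:sum} delivers the claimed sharpness when the two degrees differ.

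For part~\ref{lem:set_op:supplement}, I will argue that the border-case exclusion forces $\deg\mathcal{F} \geq 1$: $\mathcal{F}\neq\emptyset$ rules out $\deg\mathcal{F} = -\infty$, and by Lemma~\ref{lem:vi_chain} the only function of degree $0$ is a nonzero constant, so $\chi_\mathcal{F}$ of degree $0$ would force $\mathcal{F}=\qbinomS{V}{k}$, which is also excluded. Since $\deg\vek{1}=0<\deg\mathcal{F}$, applying the equality clause of Lemma~\ref{lem:deg_operations}\ref{lem:deg_operations:sum} to $\chi_{\mathcal{F}^\complement} = \vek{1}-\chi_\mathcal{F}$ immediately yields $\deg(\mathcal{F}^\complement) = \deg\mathcal{F}$.

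The only real obstacle is the forward dependency: the intersection bound in part~\ref{lem:set_op:meet_join} (and hence the union bound that builds on it) rests on Lemma~\ref{lem:deg_operations}\ref{lem:deg_operations:prod}, whose proof the authors have postponed to the end of Section~\ref{sec:weights_degree:ambient}. Apart from this single ingredient, the whole lemma is linear bookkeeping with characteristic-function identities and requires no further machinery.
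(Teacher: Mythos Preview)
Your proof is correct and matches the paper's approach almost exactly: parts~\ref{lem:set_op:meet_join}--\ref{lem:set_op:diff} are handled via the same characteristic-function identities fed into Lemma~\ref{lem:deg_operations}, and your observation about the forward dependency on Lemma~\ref{lem:deg_operations}\ref{lem:deg_operations:prod} is accurate. The only cosmetic difference is in part~\ref{lem:set_op:supplement}, where the paper applies part~\ref{lem:set_op:diff} twice (once to $\mathcal{F}$ and once to $\mathcal{F}^\complement$) to sandwich the two degrees, whereas you invoke the equality clause of Lemma~\ref{lem:deg_operations}\ref{lem:deg_operations:sum} directly after establishing $\deg\vek{1}=0<\deg\mathcal{F}$.
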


\begin{proof}
	Part~\ref{lem:set_op:meet_join} follows from Lemma~\ref{lem:deg_operations}\ref{lem:deg_operations:sum} and~\ref{lem:deg_operations:prod} by $\chi_{\mathcal{F}_1 \cap \mathcal{F}_2} = \chi_{\mathcal{F}_1} \cdot \chi_{\mathcal{F}_2}$ and $\chi_{\mathcal{F}_1 \cup \mathcal{F}_2} = \chi_{\mathcal{F}_1} + \chi_{\mathcal{F}_2} - \chi_{\mathcal{F}_1} \cdot \chi_{\mathcal{F}_2}$.

	Part~\ref{lem:set_op:disjoint_join} follows from Lemma~\ref{lem:deg_operations}\ref{lem:deg_operations:sum} by $\chi_{\mathcal{F}_1 \cup \mathcal{F}_2} = \chi_{\mathcal{F}_1} + \chi_{\mathcal{F}_2}$ for disjoint sets $\mathcal{F}_1$ and $\mathcal{F}_2$.

	Part~\ref{lem:set_op:diff} follows from Lemma~\ref{lem:deg_operations}\ref{lem:deg_operations:sum} by $\chi_{\mathcal{F}_2 \setminus \mathcal{F}_1} = \chi_{\mathcal{F}_2} - \chi_{\mathcal{F}_1}$.

	For Part~\ref{lem:set_op:supplement}, $\deg(\mathcal{F}^\complement) = \deg(\qbinomS{V}{k} \setminus \mathcal{F}) \leq \max(\deg(\qbinomS{V}{k}), \deg(\mathcal{F})) = \max(0,\deg(\mathcal{F})) = \deg(\mathcal{F})$ and in the same way $\deg(\mathcal{F}) = \deg((\mathcal{F}^\complement)^\complement) \leq \deg(\mathcal{F}^\complement)$.
\end{proof}

An important question is the characterization of the possible sizes $\#\mathcal{F}$ of sets $\mathcal{F}$ of degree $t$, and in particular, the smallest possible size $m_q(n,k,t)$ of a nontrivial set $\mathcal{F}$ of degree $\leq t$.

\begin{remark}\label{rem:m_q}
By dualization (see Theorem~\ref{thm:dual_v_space}), $m_q(n,k,t) = m_q(n,n-k,t)$, therefore we may restrict the investigation to $k \leq \frac{n}{2}$.
By $\deg \vek{x}^{(k)}_t \leq t$ we have $m_q(n,k,t) \leq \#\vek{x}^{(k)}_t = \qbinomS{n-t}{k-t}$.

For degree $1$ and $k \leq \frac{n}{2}$, the bound is always sharp:
For $q = 1$, the Boolean functions of degree~$1$ have been classified in~\cite[Th.~1.2]{Filmus-Ihringer-2019-JCTSA162:241-270}, essentially showing that only point pencils and their duals do exist.
For $q \geq 2$, the situation is much more complicated, as it covers the classical Cameron-Liebler line classes.
The equality $m_q(n,k,1) = \qbinomS{n-1}{k-1}$ follows from~\cite[Lemma 4.1]{Blokhuis-DeBoeck-DHaeseleer-2019-DCC87[8]:1839-1856}.

For further results on the sizes of small Boolean degree $1$ functions for $q \geq 2$ see \cite{DeBeule-Mannaert-Storme-2024-DCC-prelim, ihringer2024classification}.
\end{remark}

In the following, we give results on the size of a (Boolean) degree $t$ function.

\begin{lemma}\label{lem:size_by_deg}
	Let $f\in\R^{\qbinomS{V}{k}}$ and $e\in\{\max(\deg(f),0),\ldots,\min(k,n-k)\}$.
	Then
	\[
		\#f
		= \qbinom{n-e}{k-e}{q} \#\wt^{(e)}\text{.}
	\]
\end{lemma}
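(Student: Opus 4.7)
The plan is to unpack the definition of the weight distribution and perform a direct double-counting argument.

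First, since $e \geq \deg(f)$ and $e \leq \min(k, n-k)$, by definition of the degree we have $f \in \bar{V}^{(k)}_e$, so the $e$-weight distribution $\wt_f^{(e)}$ is defined and satisfies $\phi_\uparrow^{(ke)}(\wt_f^{(e)}) = f$. Equivalently,
\[
    f = \sum_{E \in \qbinomS{V}{e}} \wt_f^{(e)}(E) \cdot \boldsymbol{x}_E^{(k)}.
\]

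Next, I would sum $f(K)$ over all $K \in \qbinomS{V}{k}$ and swap the order of summation. Using $\boldsymbol{x}_E^{(k)}(K) = 1$ iff $E \leq K$, this yields
\[
    \#f
    = \sum_{E \in \qbinomS{V}{e}} \wt_f^{(e)}(E) \cdot \#\{K \in \qbinomS{V}{k} \mid E \leq K\}.
\]
The inner count equals $\qbinomS{n-e}{k-e}$ for every $E$ (by Lemma~\ref{lem:subspace_counting} with $a=b=e$, $u=k$, or directly by the standard counting formula), so it factors out and we obtain
\[
    \#f = \qbinom{n-e}{k-e}{q} \sum_{E \in \qbinomS{V}{e}} \wt_f^{(e)}(E) = \qbinom{n-e}{k-e}{q} \cdot \#\wt_f^{(e)}.
\]

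Alternatively, and perhaps more elegantly in the notation of the paper, one can write $\#f = \langle f, \boldsymbol{1}\rangle = \langle \phi_\uparrow^{(ke)}(\wt_f^{(e)}), \boldsymbol{1}\rangle = \langle \wt_f^{(e)}, \phi_\downarrow^{(ek)}(\boldsymbol{1})\rangle$, and observe that $\phi_\downarrow^{(ek)}(\boldsymbol{1}) = \qbinomS{n-e}{k-e}\,\boldsymbol{1}$ on $\qbinomS{V}{e}$. There is no real obstacle here; the only point deserving care is confirming that $\wt_f^{(e)}$ is well-defined under the given range of $e$, which follows immediately from $\bar{V}^{(k)}_{\deg(f)} \subseteq \bar{V}^{(k)}_e$ by Lemma~\ref{lem:vi_chain}.
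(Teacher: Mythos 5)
Your proof is correct and follows essentially the same route as the paper: the paper packages your double count into its Lemma~\ref{lem:phi_compose} by writing $\#f = \phi_{\uparrow}^{(n,k)}(f) = (\phi_{\uparrow}^{(n,k)}\circ\phi_{\uparrow}^{(k,e)})(\wt_f^{(e)}) = \qbinomS{n-e}{k-e}\,\phi_{\uparrow}^{(n,e)}(\wt_f^{(e)})$, which is exactly your summation-swap in disguise. Your remark about well-definedness of $\wt_f^{(e)}$ is the right (and only) point needing care, and it is handled correctly.
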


\begin{proof}
	For all $x\in\{0,\ldots,n\}$, the matrix $W^{(xn)}$ is a $(\qbinom{n}{x}{q} \times 1)$ all-one column vector.
	Hence for all $f\in\R^{\qbinomS{V}{x}}$ we have $\phi_{\uparrow}^{(n,x)}(f) = \sum_{X\in\qbinomS{V}{x}} f(X)$.
	Therefore by Lemma~\ref{lem:phi_compose}for
	\begin{multline*}
		\#f = \sum_{K\in\qbinomS{V}{k}} f(K)
		= \phi_{\uparrow}^{(n,k)}(f)
		= (\phi_{\uparrow}^{(n,k)}\circ\phi_{\uparrow}^{(k,e)})(\wt_f^{(e)}) \\
		= \qbinom{n-e}{k-e}{q}\phi_{\uparrow}^{(n,e)}(\wt_{f}^{(e)})
		= \qbinom{n-e}{k-e}{q}\sum_{E\in\qbinomS{V}{e}} \wt_{f}^{(e)}(E)
		= \qbinom{n-e}{k-e}{q}\#\wt_f^{(e)}\text{.}
	\end{multline*}
\end{proof}

The connection to the theory of designs yields the following divisibility property.

\begin{theorem}\label{thm:size_divisibility}
	Let $f : \qbinomS{V}{k} \to \Z$ be a non-zero function of degree $t$.
	Then
	\[
		\gcd\left(\textstyle\qbinomS{n-0}{k-0},\qbinomS{n-1}{k-1},\ldots,\qbinomS{n-t}{k-t}\right) \mid\#f\text{.}
	\]
\end{theorem}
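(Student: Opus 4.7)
My plan is to combine Theorem~\ref{thm:alg_geom} with the existence of signed designs at the minimum admissible parameter $\lambda_{\min}$, and to translate the resulting integrality into the claimed divisibility via Lemma~\ref{lem:lambda_min}.

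First, since $f$ has degree $t$, by definition $f\in\bar{V}_t^{(k)}$, so the algebraic property of Theorem~\ref{thm:alg_geom} is satisfied. Its equivalence with the geometric property on real designs then yields, for every real $t$-$(n,k,\lambda)_q$ design $\delta$, the identity
\[
\langle f,\delta\rangle = \frac{\lambda}{\lambda_{\max}}\cdot \#f.
\]
Next, I would invoke Fact~\ref{fact:signed_designs_realizable} to produce a signed $t$-$(n,k,\lambda_{\min})_q$ design $\delta_\star:\qbinomS{V}{k}\to\Z$; the parameters are admissible by the very definition of $\lambda_{\min}$. Plugging $\delta_\star$ into the identity and using that both $f$ and $\delta_\star$ take integer values, the left-hand side $\langle f,\delta_\star\rangle$ is an integer, which forces
\[
\lambda_{\max}\,\bigm|\,\lambda_{\min}\cdot \#f.
\]

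To finish, I would apply Lemma~\ref{lem:lambda_min}: writing $d=\gcd\!\bigl(\qbinomS{n-0}{k-0},\qbinomS{n-1}{k-1},\ldots,\qbinomS{n-t}{k-t}\bigr)$, the lemma gives the relation $\lambda_{\max}=\lambda_{\min}\cdot d$. Substituting into the divisibility $\lambda_{\max}\mid\lambda_{\min}\cdot\#f$ and cancelling the positive integer $\lambda_{\min}$ yields $d\mid\#f$, which is exactly the claim.

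The argument is essentially a single application of the design-orthogonality principle packaged in Theorem~\ref{thm:alg_geom}, so there is no real obstacle; the arithmetic content of the theorem is already hidden in Fact~\ref{fact:signed_designs_realizable}, which produces an integer-valued test design at the sharp parameter $\lambda_{\min}$. One minor point to check is that the signed design $\delta_\star$ is guaranteed to exist regardless of whether $t=\deg(f)$ or $t$ is larger (up to $\min(k,n-k)$); this is fine because $f\in\bar{V}_t^{(k)}$ suffices for the identity above, and Fact~\ref{fact:signed_designs_realizable} only requires admissibility of the chosen parameter set.
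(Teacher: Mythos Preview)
Your proof is correct and follows essentially the same route as the paper: both use Fact~\ref{fact:signed_designs_realizable} to obtain a signed $t$-$(n,k,\lambda_{\min})_q$ design, pair it with $f$ to get $\#f=\frac{\lambda_{\max}}{\lambda_{\min}}\langle f,\delta\rangle$, and then invoke Lemma~\ref{lem:lambda_min} (Equation~\eqref{eq:lambda_min_max}) to rewrite $\lambda_{\max}/\lambda_{\min}$ as the stated $\gcd$. The only cosmetic difference is that the paper derives the pairing identity directly via the representation $f^\top=\vek{x}^\top W^{(tk)}$, whereas you quote it from Theorem~\ref{thm:alg_geom}.
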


\begin{proof}
	By the algebraic property, there exists an $\vek{x} : \qbinomS{V}{t} \to \R$ with $\vek{x}^\top W^{(tk)} = f^\top$.
	The complete design gives
	\[
		W^{(tk)} \vek{1}_{\qbinomS{V}{k}} = \lambda_{\max} \vek{1}_{\qbinomS{V}{t}}\text{.}
	\]
	By Fact~\ref{fact:signed_designs_realizable} there exists a signed $t$-$(n,k,\lambda_{\min})_q$ design $\delta$, so
	\[
		W^{(tk)} \delta = \lambda_{\min} \vek{1}_{\qbinomS{V}{t}}\text{.}
	\]
	Left multiplication of the last two equations by $\vek{x}^\top$ yields
	\[
		\#f = \lambda_{\max}\cdot \#\vek{x}
		\quad\text{and}\quad
		\langle f,\delta\rangle = \lambda_{\min}\cdot \#\vek{x}
	\]
	and hence
	\[
		\#f = \frac{\lambda_{\max}}{\lambda_{\min}}\cdot \langle f,\delta\rangle\text{.}
	\]
	By $\langle f, \delta\rangle\in\Z$ and $\frac{\lambda_{\max}}{\lambda_{\min}} = \gcd\big(\qbinomS{n-0}{k-0},\qbinomS{n-1}{k-1},\ldots,\qbinomS{n-t}{k-t}\big)$ (from Equation~\eqref{eq:lambda_min_max}), the claim follows.
\end{proof}

Note that Theorem~\ref{thm:size_divisibility} reproduces (and in a few cases improves) the divisibility conditions found in~\cite[Appendix~B]{DeBeule-DHaeseleer-Ihringer-Mannaert-2023-ElecJComb30[1]:P1.31}.

Now we focus on the expression $\gcd(\textstyle\qbinomS{n-0}{k-0},\qbinomS{n-1}{k-1},\ldots,\qbinomS{n-t}{k-t})$ in Theorem~\ref{thm:size_divisibility}.

\begin{lemma}\label{lem:gcd_helper}
For $k\geq 1$,
\[
	\gcd\Big(\qbinomS{n}{k},\,\qbinomS{n-1}{k-1}\Big)
	= \frac{\qnumbS{\gcd(n,k)}}{\qnumbS{k}} \cdot\qbinomS{n-1}{k-1}\text{.}
\]
\end{lemma}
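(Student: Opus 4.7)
The plan is to use the basic identity $\qnumbS{k}\qbinomS{n}{k} = \qnumbS{n}\qbinomS{n-1}{k-1}$, which follows directly from the product definition of the $q$-binomial coefficient, together with the $q$-gcd property $\gcd(\qnumbS{a},\qnumbS{b}) = \qnumbS{\gcd(a,b)}$ recorded at the beginning of Section~\ref{sec:prelim}.

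More precisely, I would multiply both arguments of the gcd on the left-hand side by $\qnumbS{k}$ and then apply the stated identity to the first argument:
\[
	\qnumbS{k}\,\gcd\Big(\qbinomS{n}{k},\,\qbinomS{n-1}{k-1}\Big)
	= \gcd\Big(\qnumbS{k}\qbinomS{n}{k},\, \qnumbS{k}\qbinomS{n-1}{k-1}\Big)
	= \gcd\Big(\qnumbS{n}\qbinomS{n-1}{k-1},\,\qnumbS{k}\qbinomS{n-1}{k-1}\Big).
\]
Factoring out $\qbinomS{n-1}{k-1}$ and applying $\gcd(\qnumbS{n},\qnumbS{k}) = \qnumbS{\gcd(n,k)}$ yields
\[
	\qnumbS{k}\,\gcd\Big(\qbinomS{n}{k},\,\qbinomS{n-1}{k-1}\Big) = \qnumbS{\gcd(n,k)}\,\qbinomS{n-1}{k-1},
\]
and dividing by $\qnumbS{k}$ (which is a positive integer for $k\geq 1$) gives the claim. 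As a byproduct, this derivation proves that $\qnumbS{k}$ divides $\qnumbS{\gcd(n,k)}\qbinomS{n-1}{k-1}$, so the fraction in the stated formula is indeed an integer (or a polynomial in $\Z[q]$, depending on whether one works over $\Z$ or $\Z[q]$).

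Both gcd distributivity $\gcd(ac,bc) = c\gcd(a,b)$ and the $q$-gcd identity hold equally well in $\Z$ and in the UFD $\Z[q]$ (up to units), so the proof applies uniformly to the integer and polynomial interpretations. There is no real obstacle here; the only point requiring a second glance is the final division by $\qnumbS{k}$, which is justified by the identity itself rather than by an independent divisibility argument.
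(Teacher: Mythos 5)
Your proposal is correct and coincides with the paper's own proof: both multiply the gcd by $\qnumbS{k}$, apply the identity $\qnumbS{k}\qbinomS{n}{k} = \qnumbS{n}\qbinomS{n-1}{k-1}$, factor out $\qbinomS{n-1}{k-1}$, and invoke $\gcd(\qnumbS{n},\qnumbS{k}) = \qnumbS{\gcd(n,k)}$. The remark about integrality of the resulting fraction is a nice bonus but not needed.
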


\begin{proof}
	\begin{align*}
		\qnumbS{k}\cdot\gcd\Big(\qbinomS{n}{k}, \qbinomS{n-1}{k-1}\Big)
		& = \gcd\Big(\qnumbS{k}\cdot\qbinomS{n}{k},\, \qnumbS{k}\cdot\qbinomS{n-1}{k-1}\Big) \\
		& = \gcd\Big(\qnumbS{n}\cdot \qbinomS{n-1}{k-1},\, \qnumbS{k}\cdot\qbinomS{n-1}{k-1}\Big) \\
		& = \qbinomS{n-1}{k-1}\cdot\gcd(\qnumbS{n},\,\qnumbS{k})
		= \qbinomS{n-1}{k-1}\qnumbS{\gcd(n,k)}\text{.}
	\end{align*}
\end{proof}

\begin{remark}
	Lemma~\ref{lem:gcd_helper} simplifies the $\gcd$-expression in Theorem~\ref{thm:size_divisibility} in the case $t = 1$.
	It is also useful for higher values of $t$, as it can be applied recursively.
	By the same proof as above, for every $a\in\Z$
	\[
		\gcd\Big(\qbinomS{n}{k},\,a\cdot\qbinomS{n-1}{k-1}\Big)
		= \frac{\gcd(\qnumbS{n},a\cdot\qnumbS{k})}{\qnumbS{k}} \cdot\qbinomS{n-1}{k-1}\text{.}
	\]
	Now for $t = 2$ (and hence $k \geq 2$) we get
	\begin{align*}
		\gcd\Big(\qbinomS{n}{k},\,\qbinomS{n-1}{k-1},\,\qbinomS{n-2}{k-2}\Big)
		& = \gcd\Big(\gcd\Big(\qbinomS{n}{k},\,\qbinomS{n-1}{k-1}\Big),\,\qbinomS{n-2}{k-2}\Big) \\
		& = \gcd\Big(\frac{\qnumbS{\gcd(n,k)}}{\qnumbS{k}} \cdot\qbinomS{n-1}{k-1},\,\qbinomS{n-2}{k-2}\Big) \\
		& = \frac{\qnumbS{\gcd(n,k)}}{\qnumbS{k}}\, \gcd\Big(\qbinomS{n-1}{k-1},\,\frac{\qnumbS{k}}{\qnumbS{\gcd(n,k)}}\,\qbinomS{n-2}{k-2}\Big) \\
		& = \frac{\qnumbS{\gcd(n,k)}}{\qnumbS{k}}\cdot\frac{\gcd(\qnumbS{n-1},\frac{\qnumbS{k}\cdot\qnumbS{k-1}}{\qnumbS{\gcd(n,k)}})}{\qnumbS{k-1}}\cdot\qbinomS{n-2}{k-2} \\
		& = \frac{\gcd(\qnumbS{\gcd(n,k)}\cdot\qnumbS{n-1},\, \qnumbS{k}\cdot\qnumbS{k-1})}{\qnumbS{k}\cdot\qnumbS{k-1}}\, \qbinomS{n-2}{k-2}\text{.}
	\end{align*}
\end{remark}

\begin{corollary}\label{cor:m_q_interval}
	Let $0 \leq t \leq k \leq \frac{n}{2}$ and $a = \gcd\left(\textstyle\qbinomS{n-0}{k-0},\qbinomS{n-1}{k-1},\ldots,\qbinomS{n-t}{k-t}\right)$.
	Then
	\[
		m_q(n,k,t) \in \big\{a, 2a, 3a, \ldots, \textstyle\qbinomS{n-t}{k-t}\big\}\text{.}
	\]
\end{corollary}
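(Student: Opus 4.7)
The plan is to combine the divisibility result in Theorem~\ref{thm:size_divisibility} with the upper bound from Remark~\ref{rem:m_q}. For the lower bound, let $\mathcal{F}\subseteq\qbinomS{V}{k}$ be any nonempty set of degree at most $t$, and let $t' = \deg(\mathcal{F}) \in \{0,\ldots,t\}$. Theorem~\ref{thm:size_divisibility} applied to $\chi_\mathcal{F}$ yields that $a_{t'} \coloneqq \gcd(\qbinomS{n-0}{k-0},\ldots,\qbinomS{n-t'}{k-t'})$ divides $\#\mathcal{F}$. Since adding further terms to a gcd can only make it smaller in the divisibility order, $a \mid a_{t'}$ and hence $a \mid \#\mathcal{F}$. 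Taking the minimum over all nontrivial $\mathcal{F}$ of degree at most $t$, we conclude $a \mid m_q(n,k,t)$.

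For the upper bound, I would pick any $T\in\qbinomS{V}{t}$ and take the $t$-pencil $\mathcal{F} = \qbinomS{V}{k}|_T$, whose characteristic function is $\vek{x}_T^{(k)} \in \bar{V}_t^{(k)}$, so $\deg(\mathcal{F}) \leq t$. By Lemma~\ref{lem:subspace_counting} (or by noting that the $t$-pencil is a $t$-$(n,k,1)_q$ design at $T$), one has $\#\mathcal{F} = \qbinomS{n-t}{k-t}$. Hence $m_q(n,k,t) \leq \qbinomS{n-t}{k-t}$.

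Finally, since $a$ is a gcd that includes the term $\qbinomS{n-t}{k-t}$, we have $a \mid \qbinomS{n-t}{k-t}$, so the divisibility chain $a \mid m_q(n,k,t) \leq \qbinomS{n-t}{k-t}$ forces $m_q(n,k,t)$ to be one of the positive multiples of $a$ up to $\qbinomS{n-t}{k-t}$, which is precisely the set $\{a, 2a,\ldots,\qbinomS{n-t}{k-t}\}$.

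There is no real obstacle: the nontrivial content of the corollary is entirely absorbed in Theorem~\ref{thm:size_divisibility}, and the only subtlety is handling the case $\deg(\mathcal{F}) < t$, which is dispatched by the observation that the gcd defining $a$ is a divisor of the smaller gcd defining $a_{\deg(\mathcal{F})}$.
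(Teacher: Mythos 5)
Your proposal is correct and follows essentially the same route as the paper: the divisibility $a \mid m_q(n,k,t)$ from Theorem~\ref{thm:size_divisibility} and the upper bound $m_q(n,k,t) \leq \#\vek{x}^{(k)}_T = \qbinomS{n-t}{k-t}$ from the $t$-pencil. Your explicit treatment of the case $\deg(\mathcal{F}) = t' < t$ (via $a \mid a_{t'}$) is a detail the paper leaves implicit, but it does not change the argument.
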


\begin{proof}
	By Theorem~\ref{thm:size_divisibility} we have $a \mid m_q(n,k,t)$, and by $\deg \vek{x}^{(k)}_t \leq t$ we have $m_q(n,k,t) \leq \#\vek{x}^{(k)}_t = \qbinomS{n-t}{k-t}$.
\end{proof}

Whenever $\gcd\left(\textstyle\qbinomS{n-0}{k-0},\qbinomS{n-1}{k-1},\ldots,\qbinomS{n-t}{k-t}\right) = \qbinomS{n-t}{k-t}$, Corollary~\ref{cor:m_q_interval} yields the exact value of $m_q(n,k,t)$, and also $\#\wt^{(t)}_\mathcal{F}\in\N$ by Lemma~\ref{lem:size_by_deg}.
We can give a sufficient condition for this case.
\begin{lemma}\label{lem:gcd_max}
	Let $t\in \{0,\ldots k\}$ and suppose that $k-i \mid n-i$ for all $i\in\{0,\ldots,t-1\}$.
	Then
	\[
		\gcd\left(\qbinomS{n-0}{k-0},\qbinomS{n-1}{k-1},\ldots,\qbinomS{n-t}{k-t}\right) = \qbinomS{n-t}{k-t}
	\]
	and hence for $k\leq\frac{n}{2}$
	\[
		m_q(n,k,t) = \qbinomS{n-t}{k-t}\text{.}
	\]
\end{lemma}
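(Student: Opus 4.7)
The plan is to reduce the identity to an equality of two cyclotomic factorizations that are already available. Lemma~\ref{lem:lambda_min} expresses the left-hand side as $\prod_{j>0,\ t \leq (n \bmod j) < (k \bmod j)} \Phi_j$, while applying the $q$-analog of Kummer's theorem (Equation~\eqref{eq:q_kummer}) to the pair $(n-t, k-t)$ writes $\qbinomS{n-t}{k-t} = \prod_{j>0,\ ((n-t) \bmod j) < ((k-t) \bmod j)} \Phi_j$. Since $\qbinomS{n-t}{k-t}$ itself appears among the arguments of the gcd, the gcd automatically divides it, and hence the first index set is contained in the second. The task therefore reduces to proving the reverse inclusion: for every $j > 0$, the condition $((n-t) \bmod j) < ((k-t) \bmod j)$ forces $t \leq (n \bmod j) < (k \bmod j)$.

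I would establish this implication by a case analysis on the residues $n \bmod j$ and $k \bmod j$ relative to $t$, tracking whether subtracting $t$ forces a ``borrow'' modulo $j$. The delicate case is $k \bmod j < t$. Setting $s := k \bmod j$, the hypothesis applies because $s \in \{0, \ldots, t-1\}$: from $j \mid k - s$ together with $k - s \mid n - s$ one obtains $j \mid n - s$, so $n \bmod j$ and $k \bmod j$ coincide, making $(n-t) \bmod j$ and $(k-t) \bmod j$ equal and contradicting the strict inequality. A shorter argument, not using the hypothesis, excludes the sub-case $n \bmod j < t \leq k \bmod j$: the borrow raises $(n-t) \bmod j$ to $(n \bmod j) + j - t$, which already strictly exceeds $(k \bmod j) - t = (k-t) \bmod j$ since $k \bmod j < j$. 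What remains is $t \leq n \bmod j$ and $t \leq k \bmod j$ simultaneously, where no borrows occur, so the given inequality transports directly to $n \bmod j < k \bmod j$, completing the implication.

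Once the gcd is shown to equal $\qbinomS{n-t}{k-t}$, the claimed value $m_q(n,k,t) = \qbinomS{n-t}{k-t}$ for $k \leq n/2$ is immediate from Corollary~\ref{cor:m_q_interval}, which squeezes $m_q(n,k,t)$ between the gcd and $\qbinomS{n-t}{k-t}$. I expect the only real obstacle to be the case analysis sketched above; the essential insight is that the hypothesis $k - i \mid n - i$ for $i \in \{0, \ldots, t-1\}$ is precisely calibrated so that, for each $j$ potentially obstructing equality of the two index sets, the culprit value $s = k \bmod j$ must lie in the range $\{0,\ldots,t-1\}$ where the hypothesis can be invoked to force a contradiction.
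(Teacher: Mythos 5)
Your proof is correct, but it takes a genuinely different route from the paper's. The paper argues by induction on $t$: the induction hypothesis collapses the $(t+1)$-term gcd to $\gcd\bigl(\qbinomS{n-t+1}{k-t+1},\,\qbinomS{n-t}{k-t}\bigr)$, which Lemma~\ref{lem:gcd_helper} evaluates as $\frac{\qnumbS{\gcd(n-t+1,\,k-t+1)}}{\qnumbS{k-t+1}}\qbinomS{n-t}{k-t}$, and the hypothesis $k-t+1 \mid n-t+1$ kills the prefactor. You instead work with the cyclotomic factorizations \eqref{eq:lambda_min} and \eqref{eq:q_kummer} and show the two index sets coincide, i.e.\ that $((n-t)\bmod j) < ((k-t)\bmod j)$ forces $t \leq (n\bmod j) < (k\bmod j)$; your three-way residue analysis is sound (when $k\bmod j < t$ the hypothesis applied to $s = k\bmod j$ gives $j \mid k-s \mid n-s$ and hence equal residues, when $n\bmod j < t \leq k\bmod j$ the single borrow -- legitimate since $t \leq k\bmod j < j$ -- already makes the left residue too large, and otherwise no borrow occurs and the inequality transports directly). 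One small point worth a sentence in a write-up: \eqref{eq:lambda_min} is an identity of polynomials in $\Z[q]$, whereas the divisibility in Corollary~\ref{cor:m_q_interval} concerns the integer gcd for a fixed $q$; here the descent is immediate because $\qbinomS{n-t}{k-t}$ is itself one of the gcd's arguments, so the integer gcd is sandwiched between the evaluation of the polynomial gcd and $\qbinomS{n-t}{k-t}$. The paper's induction is shorter and works with integer gcds throughout; a benefit of your route is that it isolates exactly where the hypothesis $k-i\mid n-i$ is used (only to exclude indices $j$ with $k\bmod j < t$), which is consistent with the paper's remark that the hypothesis is sufficient but not necessary (e.g.\ $n=13$, $k=4$, $t=2$). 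Both proofs conclude identically via Corollary~\ref{cor:m_q_interval}.
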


\begin{proof}
	We apply induction over $t$.
	For $t=0$ the statement is clear.
	For $t \geq 1$ we have
	\begin{align*}
		\gcd\left(\qbinomS{n-0}{k-0},\qbinomS{n-1}{k-1},\ldots,\qbinomS{n-t}{k-t}\right)
		& = \gcd\left(\gcd\Big(\qbinomS{n-0}{k-0},\ldots,\qbinomS{n-t+1}{k-t+1}\Big),\, \qbinomS{n-t}{k-t}\right) \\
		& = \gcd\left(\qbinomS{n-t+1}{k-t+1},\qbinomS{n-t}{k-t}\right) \\
		& = \frac{\qnumbS{\gcd(n-t+1, k-t+1)}}{\qnumbS{k-t+1}} \cdot \qbinomS{n-t}{k-t}
		= \qbinomS{n-t}{k-t}\text{,}
	\end{align*}
	where we used the induction hypothesis in the second step, Lemma~\ref{lem:gcd_helper} in the third step, and $k-t+1 \mid n-t+1$ in the last step.
\end{proof}

\begin{remark}
Note that Lemma~\ref{lem:gcd_max} does not fully classify all cases of parameters with $\gcd\left(\textstyle\qbinomS{n-0}{k-0},\qbinomS{n-1}{k-1},\ldots,\qbinomS{n-t}{k-t}\right) = \qbinomS{n-t}{k-t}$.
For example, this property holds also for $n = 13$, $k = 4$, $t = 2$ and any value of $q$ (and in consequence, we get $m_q(13,4,2)_q = \qbinomS{11}{2}$).

Lemma~\ref{lem:gcd_max} in particular implies that $m_q(n,k,1) = \qbinomS{n-1}{k-1}$ whenever $k \mid n$ and $k\leq\frac{n}{2}$.
This result is not new, see Remark~\ref{rem:m_q}.

As a further example, in all cases with $k \geq 2$, $n = k^2$ and $t = 2$, the condition of Lemma~\ref{lem:gcd_max} are met, such that $m_q(k^2,k,2) = \qbinomS{k^2-2}{k-2}$.

We remark that $m_q(n,k,t)$ with $k\leq\frac{n}{2}$ can indeed be strictly smaller than $\qbinomS{n-t}{k-t}$.
For example, consider the case $q = 1$, $n = 6$, $k = 3$ and $t = 2$.
Corollary~\ref{cor:m_q_interval} yields $m_1(6,3,2) \in \{2,4\}$, and one checks that $\mathcal{F} = \{\{1,2,3\},\{4,5,6\}\}$ is a set of degree $2$.
Hence $m_1(6,3,2) = 2$.
\end{remark}

For Boolean degree $1$ sets $\mathcal{F}$, classifications are usually based on the \emph{parameter} $x\coloneqq\#\mathcal{F}/ \qbinomS{n-1}{k-1}$.
By Lemma~\ref{lem:size_by_deg}, this parameter $x$ equals $\#\wt_{\mathcal{F}}^{(1)}$.
For example, the point-pencils $\boldsymbol{x}^{(k)}_{P}$ with $P\in\qbinomS{V}{1}$ have the parameter $x = 1$.

Theorem~\ref{thm:size_divisibility} leads to the following divisibility condition for $x$.

\begin{corollary}\label{cor:param}
	Let $\mathcal{F}\subseteq\qbinomS{V}{k}$ be a set of degree $1$ and let $x$ be its parameter.
	Then we have
	\[
		\frac{\qnumbS{k}}{\qnumbS{\gcd(n,k)}} \cdot x \in \Z\text{,}
	\]
	restricting the denominator of the fraction $x$ in canceled form.
\end{corollary}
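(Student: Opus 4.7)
The plan is to combine the divisibility result of Theorem~\ref{thm:size_divisibility} (applied in the case $t=1$) with the explicit gcd-formula of Lemma~\ref{lem:gcd_helper}, and then read off the divisibility statement for the parameter $x$.

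First, I would recall that since $\mathcal{F}$ has degree $1$, Theorem~\ref{thm:size_divisibility} gives the divisibility
\[
	\gcd\!\Big(\qbinomS{n}{k},\,\qbinomS{n-1}{k-1}\Big) \;\Big|\; \#\mathcal{F}\text{.}
\]
Next, by Lemma~\ref{lem:gcd_helper}, this gcd equals $\frac{\qnumbS{\gcd(n,k)}}{\qnumbS{k}}\cdot\qbinomS{n-1}{k-1}$. Substituting $\#\mathcal{F} = x\qbinomS{n-1}{k-1}$, the divisibility becomes
\[
	\frac{\qnumbS{\gcd(n,k)}}{\qnumbS{k}}\cdot\qbinomS{n-1}{k-1} \;\Big|\; x\cdot\qbinomS{n-1}{k-1}\text{.}
\]
Cancelling the common factor $\qbinomS{n-1}{k-1}$ (which is a positive integer for $1 \leq k \leq n-1$, the only relevant range) yields $\frac{\qnumbS{\gcd(n,k)}}{\qnumbS{k}}\mid x$, i.e.\ $x = \frac{\qnumbS{\gcd(n,k)}}{\qnumbS{k}}\cdot m$ for some $m\in\Z$. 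Equivalently, $\frac{\qnumbS{k}}{\qnumbS{\gcd(n,k)}}\cdot x = m \in \Z$.

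Finally, I would point out the interpretation in terms of the denominator of $x$ in canceled form: writing $x = a/b$ with $\gcd(a,b) = 1$, the statement $\frac{\qnumbS{k}}{\qnumbS{\gcd(n,k)}}\cdot x\in\Z$ is equivalent to $b\mid \frac{\qnumbS{k}}{\qnumbS{\gcd(n,k)}}$, which is the restriction on the denominator claimed in the corollary. There is essentially no obstacle here — the whole argument is a one-line manipulation once Theorem~\ref{thm:size_divisibility} and Lemma~\ref{lem:gcd_helper} are available; the only minor care needed is the trivial verification that the quotient $\qnumbS{k}/\qnumbS{\gcd(n,k)}$ is a positive integer (which follows from $\gcd(n,k)\mid k$ and the classical fact that $\qnumbS{a}\mid \qnumbS{b}$ whenever $a\mid b$).
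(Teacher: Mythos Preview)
Your argument is correct and follows exactly the paper's approach: the paper's proof is the single sentence ``This follows from Theorem~\ref{thm:size_divisibility} and Lemma~\ref{lem:gcd_helper},'' and you have spelled out precisely that computation. The only addition is your (harmless) remark on why $\qnumbS{k}/\qnumbS{\gcd(n,k)}$ is an integer, which the paper leaves implicit.
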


\begin{proof}
	This follows from Theorem~\ref{thm:size_divisibility} and Lemma~\ref{lem:gcd_helper}.
\end{proof}

\begin{example}
	We study the statement of Corollary~\ref{cor:param} in a few concrete cases.
	\begin{enumerate}[(a)]
		\item
		In the case $k \mid n$, we get $x\in \Z$, as already seen in Lemma~\ref{lem:gcd_max}.
		This result is not new, as it follows from \cite[Th.~2.9, prop.~8]{Blokhuis-DeBoeck-DHaeseleer-2019-DCC87[8]:1839-1856}.
		\item
		For coprime $n$ and $k$ (and in particular for $k$ prime and $k \nmid n$) we get
		\[
			(1 + q + q^2 + \ldots + q^{k-1}) \cdot x \in \Z\text{.}
		\]
		\item
		For $k = 4$ and $n\equiv 2\pmod 4$ we get $(1 + q^2)\cdot x\in\Z$ for $n \equiv 2\pmod 4$.
	\end{enumerate}
\end{example}

A further possibility to approach the gcd-expression in Theorem~\ref{thm:size_divisibility} is given by Lemma~\ref{lem:lambda_min}, leading to the following result.
\begin{lemma}
	Let $f : \qbinomS{V}{k} \to \Z$ be a non-zero function of degree $t$.
	Then 
	\[
	    \prod_{\substack{i\in\Z_{>0} \\ t\leq (n \bmod i) < (k\bmod i)}} \Phi_i(q) \mid \# f\text{.}
    \]
\end{lemma}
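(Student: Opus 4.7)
The proof is essentially a direct composition of the two main results already established, namely Theorem~\ref{thm:size_divisibility} and Lemma~\ref{lem:lambda_min}. My plan is therefore very short.

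First, I would invoke Theorem~\ref{thm:size_divisibility} to obtain
\[
    \gcd\left(\textstyle\qbinomS{n-0}{k-0},\qbinomS{n-1}{k-1},\ldots,\qbinomS{n-t}{k-t}\right) \mid \#f\text{.}
\]
Next, I would apply Lemma~\ref{lem:lambda_min}, which asserts that in the polynomial ring $\Z[q]$ this gcd factors as
\[
    \gcd\left(\textstyle\qbinomS{n-0}{k-0},\qbinomS{n-1}{k-1},\ldots,\qbinomS{n-t}{k-t}\right)
    = \prod_{\substack{i\in\Z_{>0} \\ t\leq (n \bmod i) < (k\bmod i)}} \Phi_i\text{.}
\]
The conclusion then follows by evaluating at the fixed prime power $q$, since a divisibility relation that holds in $\Z[q]$ transfers to a divisibility of integers upon specialization.

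The only point requiring a moment of care is the transition from the polynomial identity of Lemma~\ref{lem:lambda_min} to an integer divisibility: a priori, the polynomial gcd evaluated at $q$ might be a proper divisor of the integer gcd of the evaluated $q$-binomials, so one gets
\[
    \textstyle\prod_{i}\Phi_i(q) \;\Big|\; \gcd\left(\qbinomS{n-0}{k-0},\ldots,\qbinomS{n-t}{k-t}\right)
\]
in $\Z$, which is all that is needed in combination with the first step. I expect no genuine obstacle here; the only pitfall is remembering that Lemma~\ref{lem:lambda_min} is phrased as a polynomial identity, so a single sentence justifying the specialization to the integer $q$ is warranted. Chaining the two divisibilities yields the claim.
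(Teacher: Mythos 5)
Your proposal is correct and matches the paper's intent exactly: the paper states this lemma without proof, presenting it as the immediate combination of Theorem~\ref{thm:size_divisibility} with the cyclotomic factorization in Lemma~\ref{lem:lambda_min}. Your extra remark on specializing the polynomial divisibility at $q$ (the evaluated polynomial gcd divides each evaluated $q$-binomial, hence divides the integer gcd, which in turn divides $\#f$) is the right way to close the one gap the paper leaves implicit.
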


\section{Dualization}\label{sec:dual}
In this section, $V$ and (if needed) $W$ are again either a set or a finite $\F_q$-vector space of finite rank.
For bijections $\sigma : \qbinomS{V}{k} \to \qbinomS{W}{\ell}$, the induced map $\R^{\qbinomS{V}{k}} \to \R^{\qbinomS{W}{\ell}}$, $f \mapsto (\sigma(K)) \mapsto f(K))$ will again be denoted by $\sigma$.
The induced map $\sigma : \R^{\qbinomS{V}{k}} \to \R^{\qbinomS{W}{\ell}}$ is an isomorphism of $\R$-vector spaces.
In this way, the dualization map ${\perp} : \qbinomS{V}{k} \to\qbinomS{V}{n-k}$ induces the \emph{dualized function} of a function $f : \qbinomS{V}{k} \to \R$ as $f^\perp : \qbinomS{V}{n-k} \to \R$, $K \mapsto f(K^\perp)$.
For a set $F \subseteq \R^{\qbinomS{V}{k}}$ of functions, we define the dual set of functions as the element-wise image $F^\perp = \{f^\perp \mid f\in F\} \subseteq\R^{\qbinomS{V}{n-k}}$.
Note that $(\boldsymbol{x}_U^{(k)})^\perp = \bar{\boldsymbol{x}}_{U^\perp}^{(n-k)}$, $(\bar{\boldsymbol{x}}_U^{(k)})^\perp = \boldsymbol{x}_{U^\perp}^{(n-k)}$ for all $U\in\mathcal{L}(V)$.

Our goal is to show $\deg(f^\perp) = \deg(f)$ (Theorem~\ref{thm:dual_v_space}), and then to find a formula for the weight distribution of the dual function (Theorem~\ref{thm:dual_wt_dist}).
As an intermediate step, we will compute the weight distribution of the dual pencils (Lemma~\ref{lem:weights_x_bar}).

\newcommand{\varj}{j}
\newcommand{\vari}{i}
\newcommand{\vara}{a}
\newcommand{\varb}{b}
\newcommand{\vart}{t}
\newcommand{\vars}{s}

For non-negative integers $\vara, \varb$ and $i,j\in\{0,\ldots,\varb\}$ we define the numbers
	\[
		\alpha^{(\varb,\vara)}_{\vari\varj} = q^{\varj(\vara+\varj-\vari)} \qbinom{\vara+\varb-\vari}{\varb-\varj}{q} \qbinom{\vari}{\varj}{q}\text{.}
	\]
	Moreover, we define $((\varb+1) \times (\varb + 1))$-matrix
	\[
		A^{(\varb,\vara)} = (\alpha^{(\varb,\vara)}_{\vari\varj})_{\vari,\varj\in\{0,\ldots,\varb\}}\text{.}
	\]
	For $\vari < \varj$ we have $\alpha^{(\varb,\vara)}_{\vari\varj} = 0$, and for the diagonal entries we have
	\[
		\alpha^{(\varb,\vara)}_{\vari\vari} = q^{\vara\vari}\qbinomS{\vara+\varb-\vari}{\varb-\vari} \neq 0\text{.}
	\]
	So $A$ is an invertible lower triangular matrix.
	We denote the entries of its inverse by
	\[
		(A^{(\varb,\vara)})^{-1} = (\beta^{(\varb,\vara)}_{\vari\varj})_{\vari,\varj\in\{0,\ldots,\varb\}}\text{.}
	\]
	Moreover, $\alpha^{(\varb,\vara)}_{\vari\varj} = 0$ if $\vari-\varj > \vara$.
	The matrix $A^{(\varb,0)}$ is a $((\varb+1) \times (\varb+1))$ unit matrix.

\begin{lemma}\label{lem:xbar_x_counting}
	Let $k\in\{0,\ldots,n\}$ and $i\in \{0,\ldots,\min(k,n-k)\}$.
	Furthermore, let $J\in\qbinomS{V}{n-i}$ and $K\in\qbinomS{V}{k}$ and define $y = \cork(J + K)$.
	Then for all $z\in\{0,\ldots,i\}$,
	\begin{multline*}
		\#\Big\{I\in\qbinomS{V}{i} \mid I \leq K \text{ and }\rk(I \cap J) = z\Big\} \\
		= q^{(i-z)(k-i+y-z)}\qbinomS{k - i + y}{z}\qbinomS{i-y}{i-z} = \alpha^{(i,k-i)}_{i-y,i-z}\text{.}
	\end{multline*}
\end{lemma}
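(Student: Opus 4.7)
The plan is to reduce the count to an instance of Lemma~\ref{lem:subspace_counting} inside the subspace $K$. First I would observe that since $I \leq K$, the intersection $I \cap J$ equals $I \cap (J \cap K)$, so everything can be computed inside the rank-$k$ space $K$. Writing $B = J \cap K$ and applying the dimension formula $\rk(J+K) + \rk(J \cap K) = \rk J + \rk K$ together with $\rk J = n-i$ and $\rk(J+K) = n-y$, I obtain
\[
    \rk B = (n-i) + k - (n-y) = k - i + y.
\]
Note that $0 \leq y \leq i$ (since $J+K \supseteq J$) and $y \leq n-k$ (since $J+K \supseteq K$), so $B$ is a well-defined subspace of $K$ of rank between $0$ and $k$.

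Next I would apply the second formula of Lemma~\ref{lem:subspace_counting}, but with the ambient space replaced by $K$ (of rank $k$), the subspace $B$ of rank $\rk B = k-i+y$, and parameters $a = z$, $u = i$. This yields
\[
    \#\bigl\{I \in \qbinomS{K}{i} \mid \rk(I \cap B) = z\bigr\}
    = q^{(k-i+y-z)(i-z)} \qbinomS{k-i+y}{z} \qbinomS{i-y}{i-z},
\]
which is exactly the first claimed expression. The special cases where the formula should vanish (for instance $z < y$, which is forced by $\rk(I+J) \leq \rk(K+J) = n-y$) are automatically handled by the vanishing of the corresponding $q$-binomial coefficients.

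Finally, I would verify by direct substitution that this matches $\alpha^{(i,k-i)}_{i-y,i-z}$. With $b = i$, $a = k-i$, $\vari = i-y$, $\varj = i-z$ in the definition of $\alpha^{(b,a)}_{\vari\varj}$, the exponent of $q$ becomes $(i-z)\bigl((k-i) + (i-z) - (i-y)\bigr) = (i-z)(k-i+y-z)$, the first $q$-binomial becomes $\qbinomS{(k-i)+i-(i-y)}{i-(i-z)} = \qbinomS{k-i+y}{z}$, and the second becomes $\qbinomS{i-y}{i-z}$, completing the identification. There is no real obstacle here: the whole argument is a change of viewpoint (working inside $K$) followed by a bookkeeping check, and the main thing to be careful about is consistently tracking which ambient space and which parameters feed into Lemma~\ref{lem:subspace_counting}.
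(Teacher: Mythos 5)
Your proposal is correct and follows essentially the same route as the paper's proof: reduce to counting inside $K$ with $B = J \cap K$ of rank $k-i+y$ (via the dimension formula), apply the second formula of Lemma~\ref{lem:subspace_counting} with ambient space $K$, and identify the result with $\alpha^{(i,k-i)}_{i-y,i-z}$ by substitution. The only difference is that you spell out the bookkeeping for the $\alpha$-identification, which the paper leaves implicit.
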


\begin{proof}
	Clearly
	\[
		\Big\{I\in\qbinomS{V}{i} \mid I \leq K \text{ and }\rk(I \cap J) = z\Big\}
		=
		\Big\{I\in\qbinomS{K}{i} \mid \rk(I \cap (K \cap J)) = z\Big\}\text{.}
	\]
	Application of Lemma \ref{lem:subspace_counting} (with $V \leftarrow K$, $U \leftarrow I$, $A \leftarrow I \cap J$, $B \leftarrow K\cap J$, $n = k$, $u \leftarrow i$, $a \leftarrow \rk(I \cap J) = z$ and $b \leftarrow \rk(K \cap J) = \rk(K) + \rk(J) - \rk(K+J)  = k + (n-i) - (n-y) = k - i + y$) shows that the cardinality of this set is
	\[
		q^{(k-i+y-z)(i-z)} \qbinomS{k-i+y}{z} \qbinomS{i-y}{i-z} = \alpha^{(i,k-i)}_{i-y,i-z}\text{.}
	\]
\end{proof}

\pagebreak

\begin{lemma}\label{lem:dual_pencil}
	Let $k\in\{0,\ldots,n\}$ and $i\in \{0,\ldots,\min(k,n-k)\}$.
	Furthermore, let $J\in\mathcal{L}(V)$ with $\cork J = i$.
	\begin{enumerate}[(a)]
		\item $\bar{\boldsymbol{x}}^{(k)}_J \in \bar{V}^{(k)}_i$.
		\item For all $I\in\qbinomS{V}{i}$, we have $\wt^{(i)}_{\bar{\boldsymbol{x}}^{(k)}_J}(I) = \beta^{(i,k-i)}_{i-\rk(I\cap J),0}$.
	\end{enumerate}
\end{lemma}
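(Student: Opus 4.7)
The plan is to establish parts (a) and (b) simultaneously by writing down the candidate expansion of $\bar{\boldsymbol{x}}^{(k)}_J$ as a linear combination of the $i$-pencils and verifying that it produces the correct values. Concretely, since $\mathcal{X}_i^{(k)}$ is a basis of $\bar{V}^{(k)}_i$ by Fact~\ref{fact:full_rank}, it suffices to exhibit coefficients $a_I \in \R$ with
\[
    \bar{\boldsymbol{x}}^{(k)}_J \;=\; \sum_{I \in \qbinomS{V}{i}} a_I\, \boldsymbol{x}^{(k)}_I
\]
where $a_I = \beta^{(i,k-i)}_{i-\rk(I\cap J),0}$; then (a) is immediate and (b) follows from the uniqueness of the expansion in the basis $\mathcal{X}_i^{(k)}$.

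To verify the identity, I would evaluate both sides at an arbitrary $K \in \qbinomS{V}{k}$. The left-hand side equals $1$ if $K \leq J$ and $0$ otherwise. For the right-hand side, group the $i$-subspaces $I \leq K$ by $z = \rk(I \cap J)$ and set $y = \cork(J+K)$. By Lemma~\ref{lem:xbar_x_counting}, the number of such $I$ with $\rk(I \cap J) = z$ equals $\alpha^{(i,k-i)}_{i-y,i-z}$, so the right-hand side reduces to
\[
    \sum_{z=0}^{i} \alpha^{(i,k-i)}_{i-y,i-z}\,\beta^{(i,k-i)}_{i-z,0}.
\]
After substituting $r = i-y$ and $s = i-z$, this is precisely the $(r,0)$-entry of $A^{(i,k-i)}\cdot (A^{(i,k-i)})^{-1} = \mathrm{Id}$, namely $\delta_{r,0}$.

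Thus the right-hand side evaluates to $1$ exactly when $y = i$. It remains to observe that $y = i$ means $\rk(J+K) = n-i = \rk J$, which is equivalent to $K \leq J$, i.e.\ to $\bar{\boldsymbol{x}}^{(k)}_J(K) = 1$. This matches the left-hand side and completes the verification.

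I do not expect a serious obstacle: the combinatorial content sits entirely in Lemma~\ref{lem:xbar_x_counting}, whose output was designed to be exactly the matrix $A^{(i,k-i)}$. The one point that warrants care is confirming that the summation variable $z$ indeed runs over $\{0,\ldots,i\}$ with no missing cases (using $i \leq \min(k,n-k)$ to ensure the relevant $y$ and $z$ stay in range, and using that $\alpha^{(i,k-i)}_{i-y,i-z}$ vanishes outside the admissible range so that no boundary issues arise); after that, the argument is a clean use of the inverse-matrix identity.
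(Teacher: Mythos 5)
Your proposal is correct and follows essentially the same route as the paper: write down the candidate expansion with coefficients $\beta^{(i,k-i)}_{i-\rk(I\cap J),0}$, evaluate at an arbitrary $K$, group the $I\leq K$ by $z=\rk(I\cap J)$ via Lemma~\ref{lem:xbar_x_counting}, and recognize the resulting sum as the $(i-y,0)$-entry of $A^{(i,k-i)}(A^{(i,k-i)})^{-1}$, which is $\delta_{y,i}$, i.e.\ the indicator of $K\leq J$. Your explicit remark that (b) then follows from uniqueness of the expansion in the basis $\mathcal{X}_i^{(k)}$ is a small but welcome clarification that the paper leaves implicit.
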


\begin{proof}
	We want to show that
	\begin{equation}\label{eq:x_xbar_deg}
		 \bar{\boldsymbol{x}}_J^{(k)} = \sum_{I\in\qbinomS{V}{i}} \beta^{(i,k-i)}_{z-\rk(I\cap J),z-i}\, \boldsymbol{x}_I^{(k)}\text{.}
	\end{equation}
	To this end, fix a $K\in\qbinomS{V}{k}$ and let $y = \cork(J + K)\in\{0,\ldots,i\}$.
	By Lemma~\ref{lem:xbar_x_counting}, the right hand size of~\eqref{eq:x_xbar_deg} evaluated at $K$ equals
	\begin{multline*}
		\sum_{\substack{I\in\qbinomS{V}{i} \\ I \leq K}} \beta^{(i,k-i)}_{z-\rk(I\cap J),z-i}
		= \sum_{z = 0}^n \alpha^{(i,k-i)}_{i-y,i-z} \beta^{(i,k-i)}_{i-z,0}
		= \big(A^{(i,k-i)}\cdot (A^{(i,k-i)})^{-1}\big)_{i-y,0} \\
		= \delta_{y,i}
		= \delta_{\cork(J+K), \cork(J)}
		= \begin{cases}
			1 & \text{if }K \leq J\text{;} \\
			0 & \text{otherwise,}
		\end{cases}
	\end{multline*}
	which equals the left hand side of~\eqref{eq:x_xbar_deg} at $K$.
\end{proof}

\begin{theorem}\label{thm:dual_v_space}
	For all $k\in\{0,\ldots,n\}$ and all $i\in\{0,\ldots,\min(k,n-k)\}$,
	\[
		(\bar{V}^{(k)}_i)^\perp = \bar{V}^{(n-k)}_i\text{.}
	\]
	In other words, $\deg(f^\perp) = \deg(f)$ for all $f : \qbinomS{V}{k} \to \R$.
\end{theorem}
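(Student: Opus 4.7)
The plan is to exploit the pencil bases of both sides and leverage Lemma~\ref{lem:dual_pencil}, which already provides the essential ingredient: dual pencils of corank $i$ lie in the row space $\bar{V}^{(k)}_i$.

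First, I would recall that $\bar{V}^{(k)}_i$ is spanned by the $i$-pencils $\boldsymbol{x}_I^{(k)}$ with $I\in\qbinomS{V}{i}$, and that the identity $(\boldsymbol{x}_U^{(k)})^\perp = \bar{\boldsymbol{x}}_{U^\perp}^{(n-k)}$ stated in the introduction of the section holds for all $U\in\mathcal{L}(V)$. Applying $\perp$ to the generating set, we obtain
\[
	(\bar{V}^{(k)}_i)^\perp
	= \langle (\boldsymbol{x}_I^{(k)})^\perp \mid I\in\qbinomS{V}{i}\rangle_{\R}
	= \langle \bar{\boldsymbol{x}}_{I^\perp}^{(n-k)} \mid I\in\qbinomS{V}{i}\rangle_{\R}
	= \langle \bar{\boldsymbol{x}}_J^{(n-k)} \mid J\in\qbinomS{V}{n-i}\rangle_{\R}\text{,}
\]
where the last equality uses that $\perp$ restricts to a bijection $\qbinomS{V}{i}\to\qbinomS{V}{n-i}$.

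Next I would apply Lemma~\ref{lem:dual_pencil} with the role of $k$ taken by $n-k$ (this is legitimate since the symmetric constraint $i\leq\min(k,n-k)$ is preserved). Every $J\in\qbinomS{V}{n-i}$ has $\cork J = i$, so the lemma gives $\bar{\boldsymbol{x}}_J^{(n-k)}\in \bar{V}^{(n-k)}_i$. Consequently,
\[
	(\bar{V}^{(k)}_i)^\perp \subseteq \bar{V}^{(n-k)}_i\text{.}
\]
Finally, since $\perp$ induces an $\R$-vector space isomorphism between $\R^{\qbinomS{V}{k}}$ and $\R^{\qbinomS{V}{n-k}}$, it preserves dimension, and by Fact~\ref{fact:full_rank} both $\bar{V}^{(k)}_i$ and $\bar{V}^{(n-k)}_i$ have dimension $\qbinomS{n}{i}$. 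Hence the inclusion above is an equality.

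For the degree statement, since $f\in\bar{V}^{(k)}_t$ is equivalent to $f^\perp \in (\bar{V}^{(k)}_t)^\perp = \bar{V}^{(n-k)}_t$ and the chains $\bar{V}^{(k)}_0\subseteq \bar{V}^{(k)}_1\subseteq\ldots$ and $\bar{V}^{(n-k)}_0\subseteq \bar{V}^{(n-k)}_1\subseteq\ldots$ correspond under duality, the smallest $t$ with $f\in\bar{V}^{(k)}_t$ equals the smallest $t$ with $f^\perp\in\bar{V}^{(n-k)}_t$, proving $\deg(f^\perp) = \deg(f)$. The main (minor) obstacle is merely bookkeeping: making sure the symmetry of the constraint $i\leq\min(k,n-k)$ permits the swap $k\leftrightarrow n-k$ when invoking Lemma~\ref{lem:dual_pencil}; no further computation is required since the hard combinatorial content was already absorbed into that lemma.
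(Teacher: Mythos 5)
Your proposal is correct and follows essentially the same route as the paper: dualize the pencil basis via $(\boldsymbol{x}_I^{(k)})^\perp = \bar{\boldsymbol{x}}_{I^\perp}^{(n-k)}$, invoke Lemma~\ref{lem:dual_pencil} (with $n-k$ in the role of $k$) to get the inclusion $(\bar{V}^{(k)}_i)^\perp \subseteq \bar{V}^{(n-k)}_i$, and conclude equality by comparing dimensions. Your bookkeeping of the constraint $i\leq\min(k,n-k)$ and the dimension $\qbinomS{n}{i}$ is accurate, and matches the paper's argument.
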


\begin{proof}
	Let $I\in\qbinomS{V}{i}$.
	By Lemma~\ref{lem:dual_pencil},
	\[
		(\boldsymbol{x}_I^{(k)})^\perp
		= \bar{\boldsymbol{x}}_{I^\perp}^{(n-k)}
		\in \bar{V}^{(n-k)}_i\text{.}
	\]
	Therefore,
	\[
		(\bar{V}^{(k)}_i)^\perp
		= \big\langle\big\{(\boldsymbol{x}_I^{(k)})^\perp  \mid I\in\qbinomS{V}{i}\big\}\big\rangle
		\subseteq \bar{V}^{(n-k)}_i\text{.}
	\]
	Since both sides are $\R$-vector spaces of the same dimension $\qbinomS{n}{k} = \qbinomS{n}{n-k}$, we have equality.
\end{proof}

\begin{remark}
	Via Theorem~\ref{thm:alg_geom}, the result $\deg(f^\perp) = \deg(f)$ in Theorem~\ref{thm:dual_v_space} corresponds to the property of the dual design in design theory.
\end{remark}

\begin{lemma}\label{lem:qbinom_sum}
	For all $m,n,k\in\Z$
	\[
		\sum_{i=0}^k (-1)^i q^{\binom{i}{2}} \qbinom{k-m-1-i}{k-i}{q} \qbinom{n}{i}{q} = (-1)^k q^{\binom{k}{2} - km}\qbinom{m+n}{k}{q}\text{.}
	\]
	In the case $m = -n$, the right-hand side reduces to the Kronecker delta $\delta_{k,0}$.
\end{lemma}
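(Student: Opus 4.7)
The plan is to rewrite the binomial coefficient $\qbinom{k-m-1-i}{k-i}{q}$ using the negation formula~\eqref{eq:qbinom_negation} so that the sum becomes a direct instance of the $q$-Vandermonde identity~\eqref{eq:q_vandermonde}. This is the natural move since the upper index $k-m-1-i$ is typically negative, and the shape of the LHS (a $q$-weighted alternating sum over $\qbinom{n}{i}{q}$) strongly suggests Vandermonde after removing that obstruction.

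First, I would set $N = m+1+i-k$ so that $-N = k-m-1-i$. Applying~\eqref{eq:qbinom_negation} with this $N$ and lower index $k-i$, the key simplification is that the upper index on the right-hand side collapses:
\[
	N + (k-i) - 1 = (m+1+i-k) + (k-i) - 1 = m\text{.}
\]
Hence
\[
	\qbinom{k-m-1-i}{k-i}{q}
	= (-1)^{k-i}\,q^{-(k-i)(m+1+i-k) - \binom{k-i}{2}}\qbinom{m}{k-i}{q}\text{.}
\]

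Second, I would substitute this into the LHS. The sign prefactor $(-1)^i(-1)^{k-i} = (-1)^k$ factors out, and the exponent of $q$ becomes
\[
	\binom{i}{2} - (k-i)(m+1+i-k) - \binom{k-i}{2}\text{.}
\]
The short computation to verify (either by expansion or by checking on both sides algebraically) is the identity
\[
	\binom{i}{2} - \binom{k-i}{2} - (k-i)(m+1+i-k) = \binom{k}{2} - km + i(m-k+i)\text{,}
\]
which is the main (and essentially only) piece of bookkeeping. With this in hand, the $q$-power $q^{\binom{k}{2}-km}$ pulls out of the sum, leaving
\[
	\text{LHS} = (-1)^k q^{\binom{k}{2}-km}\sum_{i=0}^k q^{i(m-k+i)}\qbinom{m}{k-i}{q}\qbinom{n}{i}{q}\text{.}
\]

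Third, the remaining sum is exactly the right-hand side of the $q$-Vandermonde identity~\eqref{eq:q_vandermonde}, which evaluates to $\qbinom{m+n}{k}{q}$, giving the claim. For the special case $m = -n$, the RHS is $(-1)^k q^{\binom{k}{2}+kn}\qbinom{0}{k}{q}$, and since $\qbinom{0}{k}{q}$ equals $1$ for $k=0$ and $0$ otherwise, this collapses to $\delta_{k,0}$. The main obstacle is simply the arithmetic of the $q$-exponent; once that identity is checked, everything else is an immediate application of the two identities already established in the preliminaries.
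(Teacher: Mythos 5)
Your proof is correct and uses exactly the same two ingredients as the paper's proof, namely the negation formula~\eqref{eq:qbinom_negation} together with the $q$-Vandermonde identity~\eqref{eq:q_vandermonde}; the only cosmetic difference is that you transform the left-hand side of the lemma into the Vandermonde sum, whereas the paper substitutes the negation formula into Vandermonde and reads off the statement. The exponent bookkeeping $\binom{i}{2}-\binom{k-i}{2}-(k-i)(m+1+i-k)=\binom{k}{2}-km+i(m-k+i)$ checks out, so no changes are needed.
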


\begin{proof}
	By the negation formula~\eqref{eq:qbinom_negation},
	\[
		\qbinom{m}{k-i}{q} = (-1)^{k-i} \frac{1}{q^{(k-i)(-m) + \binom{k-i}{2}}} \qbinom{-m+k-i-1}{k-i}{q}\text{.}
	\]
	Application of this replacement in the $q$-Vandermonde identity~\eqref{eq:q_vandermonde} yields
	\[
		\qbinom{n+m}{k}{q} = \sum_{i=0}^k (-1)^{k-i} \frac{1}{q^{(k-i)(-m) + \binom{k-i}{2}}} \qbinom{-m+k-i-1}{k-i}{q} \qbinom{n}{i}{q} q^{i(m-k+i)}\text{,}
	\]
	which transforms to the formula in the statement.

	For $m = -n$, we have $\qbinom{n+m}{k}{q} = \qbinom{0}{k}{q} = \delta_{k,0}$.
\end{proof}

\begin{lemma}\label{lem:matrix_inverse}
	For $a \neq 0$,
	\[
		\beta^{(\varb,\vara)}_{\vari\varj} =
		\begin{cases}
			\displaystyle\frac{(-1)^{\vari-\varj}}{q^{\vara\vari + \binom{\vari}{2} - \binom{\varj}{2}}} \, \frac{\qnumb{\vara}{q}}{\qnumb{\vara+\vari-\varj}{q}} \, \frac{\qbinom{\vari}{\varj}{q}}{\qbinom{\vara+\varb-\varj}{\varb-\vari}{q}} & \text{if }j \leq i\text{,} \\
			0 & \text{otherwise.}
		\end{cases}
	\]
\end{lemma}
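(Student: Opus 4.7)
My plan is to verify the claimed formula by direct multiplication: set $B=(b_{ij})$ with $b_{ij}$ equal to the proposed expression (and $b_{ij}=0$ for $j>i$), and show $A^{(b,a)}B=I$. Since $A^{(b,a)}$ is lower triangular with nonzero diagonal entries $\alpha^{(b,a)}_{ii}=q^{ai}\qbinomS{a+b-i}{b-i}$ (as noted just before the statement), its inverse is uniquely determined by this equation and must be lower triangular, so it suffices to check $(A^{(b,a)}B)_{i\ell}=\delta_{i\ell}$ for $\ell\le i$.

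First I would dispatch the diagonal case $\ell=i$: the sum collapses to the single term $\alpha^{(b,a)}_{ii}\,b_{ii}$, and one reads off $b_{ii}=q^{-ai}\qbinomS{a+b-i}{b-i}^{-1}$ from the formula (with $(-1)^0=1$, $\qnumbS{a}/\qnumbS{a}=1$, and $\qbinomS{i}{i}=1$), so the product is $1$ as required.

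For the off-diagonal case $\ell<i$, set $d=i-\ell\ge 1$, substitute the formulas into $\sum_{j=\ell}^i \alpha^{(b,a)}_{ij}b_{j\ell}$, and reindex with $r=j-\ell\in\{0,\dots,d\}$. I would then apply the trinomial identity $\qbinomS{i}{j}\qbinomS{j}{\ell}=\qbinomS{i}{\ell}\qbinomS{d}{r}$, use $j^2-\binom{j}{2}=\binom{j+1}{2}$ to collect the $q$-exponents, and express the ratio $\qbinomS{a+b-i}{b-j}/\qbinomS{a+b-\ell}{b-j}$ as a ratio of falling $q$-factorials so that the $r$-dependence becomes transparent. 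After factoring out all $r$-independent quantities, the remaining sum is a signed, $q$-weighted sum of a product of two $q$-binomials in $r$, which matches the shape of Lemma~\ref{lem:qbinom_sum}; by a suitable choice of the parameters $m,n,k$ (arranged so that $m+n=0$ and $k=d>0$), the right-hand side of that identity specializes to $\qbinomS{0}{d}=0$, forcing the sum to vanish. As a sanity check, one can alternatively expand $\qbinomS{a+r-1}{d-1}$ via the $q$-Vandermonde identity~\eqref{eq:q_vandermonde} and then apply the $q$-binomial theorem; a factor $(1-q^0)$ appears whenever $d>j$, killing every term of the resulting double sum.

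The main obstacle is not conceptual but combinatorial bookkeeping: six $q$-binomials, several $q$-powers, a sign, and two $q$-factorial ratios need to be tracked in parallel to match the shape of Lemma~\ref{lem:qbinom_sum}. No new identity is required beyond what has already been assembled in the preceding lemma, which was evidently tailored for precisely this verification; what remains is a careful but routine calculation.
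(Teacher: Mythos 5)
Your proposal is correct and follows essentially the same route as the paper: the paper likewise verifies the explicit formula by multiplying $A^{(b,a)}$ against the candidate inverse, collapsing the diagonal case immediately and reducing each off-diagonal entry to the alternating sum of Lemma~\ref{lem:qbinom_sum} in its $m+n=0$ specialization. The only difference is that the paper computes $BA$ rather than $AB$, which shifts the bookkeeping slightly: in your order, after the reindexing the lemma is invoked with $m=-d$, $n=d$ and $k=a$ (so the hypothesis $a\neq 0$ is what forces $\qbinomS{0}{a}=0$), not with $k=d$ as your parenthetical suggests — a harmless slip that the computation itself would correct.
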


\begin{proof}
	Let $A = A^{(\varb,\vara)}$ and $B$ be the lower triangular $((\varb+1)\times(\varb+1))$-matrix with the entries $\beta^{(b,a)}_{ij}$ as given in the statement.
	As a product of two lower triangle matrices, $BA$ is a lower triangular matrix.
	To compute its entries, let $i,j\in\{0,\ldots,b\}$ with $i \geq j$.
	\begin{align*}
		(BA)_{\vari\varj} & = \sum_{\vart=\varj}^{\min(\vari,\vara+\varj)} \frac{(-1)^{\vari-\vart}}{q^{\vara \vari + \binom{\vari}{2} - \binom{\vart}{2}}} \frac{\qnumbS{\vara}}{\qnumbS{\vara+\vari-\vart}}\, \frac{\qbinomS{\vari}{\vart}}{\qbinomS{\vara+\varb-\vart}{\varb-\vari}} \cdot q^{\varj(\vara+\varj-\vart)} \qbinomS{\vara+\varb-\vart}{\varb-\varj} \qbinomS{\vart}{\varj} \\
		& = q^{\vara(\varj-\vari)+\varj^2 - \binom{\vari}{2}} \frac{\qnumbS{\vari}!\, \qnumbS{\varb-\vari}!}{\qnumbS{\varb-\varj}!\,\qnumbS{\varj}!} \sum_{\vart=\varj}^{\min(\vari,\vara+\varj)} (-1)^{\vari-\vart} q^{\binom{\vart}{2} - \varj\vart} \frac{\qnumbS{\vara}\, \qnumbS{\vara+\vari-1-\vart}!}{\qnumbS{\vari-\vart}!\, \qnumbS{\vara+\varj-\vart}!\, \qnumbS{\vart-\varj}!} \\
		& = (-1)^{\vari-\varj} q^{\vara(\varj-\vari)- \binom{\vari}{2} + \binom{\varj}{2}} \frac{\qbinomS{\varb}{\varj}}{\qbinomS{\varb}{\vari}}\, \sum_{\vart=\varj}^{\vari} (-1)^{\vart-\varj} q^{\binom{\vart-j}{2}}\qbinomS{\vara+\vari-1-\vart}{\vari-\vart} \qbinomS{\vara}{\vart-\varj} \\
		& = (-1)^{\vari-\varj} q^{\vara(\varj-\vari)- \binom{\vari}{2} + \binom{\varj}{2}} \frac{\qbinomS{\varb}{\varj}}{\qbinomS{\varb}{\vari}}\, \sum_{\vars=0}^{\vari-\varj} (-1)^{\vars} q^{\binom{\vars}{2}}\qbinomS{(\vari-\varj)+\vara-1-\vars}{(\vari-\varj)-\vars} \qbinomS{\vara}{\vars}\text{.}
	\end{align*}
	Applying Lemma~\ref{lem:qbinom_sum} (with $m\leftarrow -\vara$, $n\leftarrow \vara$, $k\leftarrow\vari - \varj$, $i\leftarrow\vars$), this expression equals
	\[
		(-1)^{\vari-\varj} q^{\vara(\varj-\vari)- \binom{\vari}{2} + \binom{\varj}{2}} \frac{\qbinomS{\varb}{\varj}}{\qbinomS{\varb}{\vari}} \delta_{\vari-\varj,0} = \delta_{\vari,\varj}\text{.}
	\]
	So $BA$ is the unit matrix and hence $B = A^{-1}$.

	We remark that $a \neq 0$ was needed to ensure $\vara+\vari-1-\vart \geq 0$, such that the expression $\qnumbS{\vara+\vari-1-\vart}!$ is defined.
\end{proof}

\begin{lemma}\label{lem:weights_x_bar}
	Let $i\in\{0,\ldots,k\}$, $J\in\qbinomS{V}{n-i}$, $I\in\qbinomS{V}{i}$ and $z = \rk(I \cap J)$.
	Then
\[
	\wt^{(i)}_{\bar{\boldsymbol{x}}^{(k)}_J}(I)
	= \gamma(k,i,z)
	\coloneqq
	\begin{cases} \delta_{z,k} & \text{if }i = k\text{,}\\
	\displaystyle(-1)^{i-z} \frac{1}{q^{(k-i)(i-z) + \binom{i-z}{2}}} \frac{\qnumbS{k-i}}{\qnumbS{k-z}} \frac{1}{\qbinomS{k}{z}}& \text{otherwise.}
	\end{cases}
\]
\end{lemma}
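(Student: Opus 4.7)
The plan is to observe that this lemma is essentially a direct corollary of the two preceding results, Lemma~\ref{lem:dual_pencil}(b) and Lemma~\ref{lem:matrix_inverse}, with a small amount of care needed at the boundary. Lemma~\ref{lem:dual_pencil}(b) already identifies the weight as
\[
	\wt^{(i)}_{\bar{\boldsymbol{x}}^{(k)}_J}(I) = \beta^{(i,k-i)}_{i-z,0},
\]
so what remains is to plug the index tuple $(b,a,i,j) = (i,\,k-i,\,i-z,\,0)$ into the closed form of $\beta^{(b,a)}_{ij}$ given by Lemma~\ref{lem:matrix_inverse} and verify that the output coincides with $\gamma(k,i,z)$.

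For the main case $i<k$, the parameter $a=k-i$ is nonzero, so Lemma~\ref{lem:matrix_inverse} applies directly. The substitution yields
\[
	\beta^{(i,k-i)}_{i-z,0}
	= \frac{(-1)^{i-z}}{q^{(k-i)(i-z)+\binom{i-z}{2}}}\cdot\frac{\qnumbS{k-i}}{\qnumbS{k-z}}\cdot\frac{\qbinomS{i-z}{0}}{\qbinomS{k}{z}},
\]
and since $\qbinomS{i-z}{0}=1$ this is exactly the formula for $\gamma(k,i,z)$ in the non-degenerate branch of the statement. No further manipulation is required.

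The boundary case $i=k$ has to be treated separately, because Lemma~\ref{lem:matrix_inverse} was established under the hypothesis $a\neq 0$. Here one argues directly: $\min(k,n-k)=k$ forces $k\le n/2$, so $\bar{V}^{(k)}_k=\R^{\qbinomS{V}{k}}$, and for $I\in\qbinomS{V}{k}$ the $k$-pencil $\boldsymbol{x}^{(k)}_I$ is simply the indicator of the singleton $\{I\}$. Consequently $\wt^{(k)}_f(I)=f(I)$ for every function $f$, and applying this to $f=\bar{\boldsymbol{x}}^{(k)}_J$ gives $\wt^{(k)}_{\bar{\boldsymbol{x}}^{(k)}_J}(I)=[I\le J]$. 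Since $\rk I=k$, the condition $I\le J$ is equivalent to $z=\rk(I\cap J)=k$, so the weight equals $\delta_{z,k}$, as claimed.

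I do not expect a genuine obstacle: the lemma is really a bookkeeping statement that specializes the general inverse matrix formula to its first column. The only mildly delicate point is the $i=k$ split, where the hypothesis $a\neq 0$ of Lemma~\ref{lem:matrix_inverse} fails and one must recognize that the $k$-pencils form the standard basis of $\R^{\qbinomS{V}{k}}$, making the weight distribution trivial to read off.
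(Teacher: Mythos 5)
Your proposal is correct and follows essentially the same route as the paper: both reduce the claim to $\wt^{(i)}_{\bar{\boldsymbol{x}}^{(k)}_J}(I)=\beta^{(i,k-i)}_{i-z,0}$ via Lemma~\ref{lem:dual_pencil}\,(b) and then evaluate that entry with Lemma~\ref{lem:matrix_inverse} for $i<k$. The only cosmetic difference is the boundary case $i=k$, where the paper simply notes that $A^{(k,0)}$ is the identity matrix (so $\beta^{(k,0)}_{k-z,0}=\delta_{z,k}$), while you read off the same conclusion from the fact that the $k$-pencils form the standard basis; both are valid.
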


\begin{proof}
	We have already seen that $\wt^{(i)}_{\bar{\boldsymbol{x}}^{(k)}_J}(I) = \beta^{(i,k-i)}_{i-\rk(I\cap J),0}$.
	For $i = k$, $A^{(k,0)} = (A^{(k,0)})^{-1}$ is a unit matrix.
	For $i < k$, evaluation of the formula in Lemma~\ref{lem:matrix_inverse} concludes the proof.
\end{proof}

\begin{theorem}\label{thm:dual_wt_dist}
	Let $k\in\{0,\ldots,n\}$, $i\in\{\max(\deg(f),0),\ldots\min(k,n-k)\}$ and $f\in \bar{V}^{(k)}_i$.
	Then the $i$-weight distribution of $f^\perp\in \bar{V}^{(n-k)}_i$ is given by
	\[
		\wt^{(i)}_{f^\perp}(J) = \sum_{I\in\qbinomS{V}{i}} \gamma(n-k,i,\rk(I^\perp \cap J))\, \wt^{(i)}_f(I)
	\]
	where $J\in\qbinomS{V}{i}$ and $\gamma$ is defined as in Lemma~\ref{lem:weights_x_bar}.
\end{theorem}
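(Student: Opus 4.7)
The plan is to reduce everything to what has already been proved about the pencils $\boldsymbol{x}^{(k)}_I$ and their duals $\bar{\boldsymbol{x}}^{(n-k)}_{I^\perp}$, so that the formula drops out from plain linearity of the $i$-weight distribution.

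First I would expand $f$ in the basis $\mathcal{X}^{(k)}_i$ of $\bar{V}^{(k)}_i$ guaranteed by Fact~\ref{fact:full_rank}:
\[
    f = \sum_{I\in\qbinomS{V}{i}} \wt^{(i)}_f(I)\, \boldsymbol{x}^{(k)}_I.
\]
Since dualization on functions is $\R$-linear (it is induced by a bijection $\qbinomS{V}{k}\to\qbinomS{V}{n-k}$), applying $\perp$ and using the identity $(\boldsymbol{x}^{(k)}_I)^\perp = \bar{\boldsymbol{x}}^{(n-k)}_{I^\perp}$ recorded at the start of Section~\ref{sec:dual} gives
\[
    f^\perp = \sum_{I\in\qbinomS{V}{i}} \wt^{(i)}_f(I)\, \bar{\boldsymbol{x}}^{(n-k)}_{I^\perp}.
\]

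Next, I would invoke Lemma~\ref{lem:dual_pencil} (with $k$ replaced by $n-k$, which is legitimate because $i \leq n-k$) to conclude that each $\bar{\boldsymbol{x}}^{(n-k)}_{I^\perp}$ lies in $\bar{V}^{(n-k)}_i$, so that $f^\perp \in \bar{V}^{(n-k)}_i$ as well (this is also guaranteed by Theorem~\ref{thm:dual_v_space}). Consequently $\wt^{(i)}_{f^\perp}$ is well defined, and since the expansion of any element of $\bar{V}^{(n-k)}_i$ in the basis $\mathcal{X}^{(n-k)}_i$ is unique, the map $g \mapsto \wt^{(i)}_g$ is $\R$-linear on $\bar{V}^{(n-k)}_i$. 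Thus
\[
    \wt^{(i)}_{f^\perp}(J) = \sum_{I\in\qbinomS{V}{i}} \wt^{(i)}_f(I)\, \wt^{(i)}_{\bar{\boldsymbol{x}}^{(n-k)}_{I^\perp}}(J).
\]

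Finally, I would apply Lemma~\ref{lem:weights_x_bar} to each inner weight, again with $k$ replaced by $n-k$ and with the roles of the lemma's $I$ and $J$ played by $J\in\qbinomS{V}{i}$ and $I^\perp\in\qbinomS{V}{n-i}$ respectively. This identifies
\[
    \wt^{(i)}_{\bar{\boldsymbol{x}}^{(n-k)}_{I^\perp}}(J) = \gamma\bigl(n-k,\, i,\, \rk(J\cap I^\perp)\bigr) = \gamma\bigl(n-k,\, i,\, \rk(I^\perp\cap J)\bigr),
\]
and substituting yields exactly the claimed formula. There is no real obstacle here — the calculations were all carried out in the preceding lemmas; the only thing to be careful about is matching the hypotheses of Lemma~\ref{lem:weights_x_bar} (in particular $i \leq n-k$, which is built into the hypothesis of the theorem) and keeping track that the weight-distribution map is linear on $\bar{V}^{(n-k)}_i$.
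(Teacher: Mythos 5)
Your proposal is correct and follows essentially the same route as the paper: expand $f$ in the $i$-pencil basis, dualize term by term via $(\boldsymbol{x}^{(k)}_I)^\perp = \bar{\boldsymbol{x}}^{(n-k)}_{I^\perp}$, and read off the weights of the dual pencils from Lemma~\ref{lem:weights_x_bar}; your appeal to linearity of the weight-distribution map is just a cleaner phrasing of the paper's explicit coefficient collection. Incidentally, your form $\gamma(n-k,i,\cdot)$ is the correct order of arguments, matching the theorem statement (the paper's own proof displays $\gamma(i,n-k,\cdot)$, which is a typo).
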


\begin{proof}
	For $I\in\qbinomS{V}{i}$, let $a_I = \wt^{(i)}_f(I)$, so $f = \sum_{J\in\qbinomS{V}{i}} a_I \boldsymbol{x}^{(k)}_I$.
	Dualization and the application of Lemma~\ref{lem:weights_x_bar} gives
	\begin{multline*}
		f^\perp
		= \sum_{I\in\qbinomS{V}{i}} a_I \bar{\boldsymbol{x}}_{I^\perp}^{(n-k)}
		= \sum_{I\in\qbinomS{V}{i}} a_I \sum_{J\in\qbinomS{V}{i}} \gamma(i,n-k,\rk(I^\perp \cap J))\, \boldsymbol{x}_J^{(k)} \\
		= \sum_{J\in\qbinomS{V}{i}} \Big(\sum_{I\in\qbinomS{V}{i}} \wt^{(i)}_f(I)\, \gamma(i,n-k,\rk(I^\perp \cap J))\Big)\, \boldsymbol{x}_J^{(k)}\text{.}
	\end{multline*}
\end{proof}

The weight transformation of Theorem~\ref{thm:dual_wt_dist} can be read as a linear map $\psi^{(i)}_k : \R^{\qbinomS{V}{i}} \to \R^{\qbinomS{V}{i}}$ such that for all $f : \qbinomS{V}{k} \to \R$, $\psi^{(i)}_k(\wt^{(i)}_f) = \wt^{(i)}_{f^\perp}$.
With respect to the standard basis of $\qbinomS{V}{i}$, the matrix representing $\psi^{(i)}_k$ is the symmetric matrix $\Gamma^{(i)}_k\in\R^{\qbinomS{V}{i} \times \qbinomS{V}{i}}$, where the entry at $(J,I)$ is given by $\gamma(i, n-k, \rk(I^\perp \cap J))$.
By $(f^\perp)^\perp = f$, the map $\psi^{(i)}_k$ is invertible with inverse $\psi^{(i)}_{n-k}$.

\pagebreak

We collect a few properties of the map $\psi^{(i)}_k$.
\begin{lemma}\label{lem:psi_prop}
	Let $k\in\{0,\ldots,n\}$.
	\begin{enumerate}[(a)]
		\item\label{lem:psi_prop:dual} For all integers $0 \leq i \leq \min(k,n-k)$,
		\[
			{\perp^{(n-k,k)}} \circ \phi_{\uparrow}^{(k,i)} = \phi_{\uparrow}^{(n-k,i)} \circ \psi^{(i)}_k\text{,}
		\]
		where ${\perp^{(n-k,k)}}$ denotes the dualization map $\qbinomS{V}{k} \to \qbinomS{V}{n-k}$, $f\mapsto f^\perp$.
		\item\label{lem:psi_prop:inverse} For all integers $0 \leq i \leq \min(k,n-k)$, the map $\psi_k^{(i)}$ is invertible with inverse $\psi_{n-k}^{(i)}$.
		\item\label{lem:psi_prop:change_level} For all integers $0 \leq i \leq j \leq \min(k,n-k)$,
		\[
		\qbinomS{n-k-j}{i-j}\, \psi^{(i)}_k \circ \phi^{(ij)}_{\uparrow} = \qbinomS{k-j}{i-j}\, \phi^{(ij)}_{\uparrow} \circ \psi^{(j)}_k\text{.}
		\]
	\end{enumerate}
\end{lemma}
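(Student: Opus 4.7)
The plan is to use the defining property of $\psi^{(i)}_k$ established in Theorem~\ref{thm:dual_wt_dist}, namely $\psi^{(i)}_k(\wt^{(i)}_f)=\wt^{(i)}_{f^\perp}$ for every $f\in\bar{V}^{(k)}_i$, together with the fact that $\phi_\uparrow^{(ki)}$ is injective with image $\bar{V}^{(k)}_i$ (Fact~\ref{fact:full_rank}). Injectivity ensures that every $g\in\R^{\qbinomS{V}{i}}$ arises as the unique $i$-weight distribution of the function $f=\phi_\uparrow^{(ki)}(g)\in\bar{V}^{(k)}_i$, and this dictionary reduces all three parts to identities between weight distributions.

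For part~(a) I would take $g\in\R^{\qbinomS{V}{i}}$ and set $f=\phi_\uparrow^{(ki)}(g)$, so that $g=\wt^{(i)}_f$. By definition $\psi^{(i)}_k(g)=\wt^{(i)}_{f^\perp}$, hence $\phi_\uparrow^{(n-k,i)}(\psi^{(i)}_k(g))=f^\perp=(\phi_\uparrow^{(ki)}(g))^\perp$, which is the claim. Part~(b) is then immediate: applying (a) twice yields $\phi_\uparrow^{(ki)}\circ\psi^{(i)}_{n-k}\circ\psi^{(i)}_k={\perp}\circ\phi_\uparrow^{(n-k,i)}\circ\psi^{(i)}_k={\perp}\circ{\perp}\circ\phi_\uparrow^{(ki)}=\phi_\uparrow^{(ki)}$, and cancelling the injective $\phi_\uparrow^{(ki)}$ on the left gives $\psi^{(i)}_{n-k}\circ\psi^{(i)}_k=\id$; swapping $k\leftrightarrow n-k$ yields the other composition.

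For part~(c), interpreting the stated index range as $0\le j\le i\le\min(k,n-k)$ (which is necessary both for $\phi_\uparrow^{(ij)}$ to make sense as a map $\R^{\qbinomS{V}{j}}\to\R^{\qbinomS{V}{i}}$ and for the two binomial prefactors to be nonzero), I would pick $g\in\R^{\qbinomS{V}{j}}$ and set $f=\phi_\uparrow^{(kj)}(g)\in\bar{V}^{(k)}_j\subseteq\bar{V}^{(k)}_i$, so that $\wt^{(j)}_f=g$. Lemma~\ref{lem:vi_chain} applied to $f$ yields $\wt^{(i)}_f=\qbinomS{k-j}{i-j}^{-1}\phi_\uparrow^{(ij)}(g)$, from which $\psi^{(i)}_k(\phi_\uparrow^{(ij)}(g))=\qbinomS{k-j}{i-j}\,\wt^{(i)}_{f^\perp}$. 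Independently, Theorem~\ref{thm:dual_v_space} places $f^\perp$ in $\bar{V}^{(n-k)}_j$ with $\wt^{(j)}_{f^\perp}=\psi^{(j)}_k(g)$, and a second application of Lemma~\ref{lem:vi_chain}, now inside $\bar{V}^{(n-k)}$, gives $\wt^{(i)}_{f^\perp}=\qbinomS{n-k-j}{i-j}^{-1}\phi_\uparrow^{(ij)}(\psi^{(j)}_k(g))$. Equating the two expressions for $\wt^{(i)}_{f^\perp}$ and clearing denominators produces the claimed scaled commutation.

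The main obstacle is purely bookkeeping: tracking source and target spaces of the various $\phi_\uparrow$'s and dualization maps, and keeping straight that Lemma~\ref{lem:vi_chain} is being applied once at rank $k$ and once at rank $n-k$, which is what produces the two different binomial prefactors in part~(c). No genuinely new computation is required beyond the results already established.
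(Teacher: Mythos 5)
Your proposal is correct and follows essentially the same route as the paper: part (a) is Theorem~\ref{thm:dual_wt_dist} restated, part (b) comes from the involutive nature of dualization, and part (c) is obtained by computing $\wt^{(i)}_{f^\perp}$ in two ways via Lemma~\ref{lem:vi_chain}, once at rank $k$ and once at rank $n-k$. Your reading of the index range in (c) as $0\leq j\leq i\leq\min(k,n-k)$ is also the one the paper's own proof implicitly uses.
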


\begin{proof}
Part~\ref{lem:psi_prop:dual} is just the content of Theorem~\ref{thm:dual_wt_dist}.
Part~\ref{lem:psi_prop:inverse} follows from $(f^\perp)^\perp = f$.
For Part~\ref{lem:psi_prop:change_level}, for any $f\in \bar{V}_i^{(k)}$ we have by Theorem~\ref{thm:dual_wt_dist} and Lemma~\ref{lem:vi_chain}
\begin{multline*}
	\Big(\qbinomS{k-j}{i-j}^{-1} \psi^{(i)}_k \circ \phi_{\uparrow}^{(ij)}\Big)(\wt_f^{(j)})
	= \psi^{(i)}_k\Big(\qbinomS{k-j}{i-j}^{-1}\phi_{\uparrow}^{(ij)}(\wt_f^{(j)})\Big)
	= \psi^{(i)}_k(\wt_f^{(i)}) \\
	= \wt_{f^\perp}^{(i)}
	= \qbinomS{n-k-j}{i-j}^{-1} \phi_{\uparrow}^{(ij)}(\wt_{f^\perp}^{(j)})
	= \qbinomS{n-k-j}{i-j}^{-1} \phi_{\uparrow}^{(ij)}(\psi^{(j)}_k(\wt_{f}^{(j)}))
\end{multline*}
\end{proof}

\begin{corollary}\label{cor:anti_isomorphism_deg_wt}
	Let $\tau : \mathcal{L}(V) \to \mathcal{L}(W)$ be an anti-isomorphism of lattices (so $\rk(W) = \rk(V) = n$).
	Let $k\in\{0,\ldots,n\}$ and $f\in\qbinomS{V}{k}$.
	Then $\deg \tau(f) = \deg(f)$ and for all $i\in\{\max(\deg(f),0),\ldots,\min(k,n-k)\}$, the $i$-weight distribution $\wt^{(i)}_{\tau(f)} : \qbinomS{W}{i} \to \R$ is given by%
	\footnote{In the expression $\tau(f)$, the symbol $\tau$ denotes the induced map $\R^{\qbinomS{V}{k}} \to \R^{\qbinomS{W}{n-k}}$, $f \mapsto (\tau(K) \mapsto f(K))$.}
	\[
		\wt^{(i)}_{\tau(f)}(J) = \sum_{I\in\qbinomS{V}{i}} \gamma(n-k,i,\rk_W(\tau(I) \cap J))\, \wt^{(i)}_f(I)
	\]
	where $J\in\qbinomS{W}{i}$.
\end{corollary}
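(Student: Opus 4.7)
The plan is to reduce to Theorem~\ref{thm:dual_v_space} and Theorem~\ref{thm:dual_wt_dist} by factoring $\tau$ through the fixed anti-automorphism $\perp$ of $\mathcal{L}(V)$. Concretely, I would set $\sigma := \tau \circ \perp^{-1}$. As the composition of two anti-isomorphisms of lattices, $\sigma : \mathcal{L}(V) \to \mathcal{L}(W)$ is an isomorphism of lattices, so in particular it preserves rank and restricts to bijections $\qbinomS{V}{m} \to \qbinomS{W}{m}$ for every $m$. Writing $\tau = \sigma \circ \perp$ and unwinding the induced actions on functions then yields the factorization $\tau(f) = \sigma(f^\perp)$, provided one uses that $\perp$ is an involution (which holds in all concrete choices discussed in the paper).

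The key intermediate observation is that any lattice isomorphism $\sigma : \mathcal{L}(V) \to \mathcal{L}(W)$ transports the entire apparatus used to define degree and weights. Because $\sigma$ preserves containment, the incidence matrices $W^{(ik)}$ correspond under $\sigma$, and therefore so do the row spaces $\bar{V}_i^{(k)}$. Moreover, $\sigma$ sends the pencil $\boldsymbol{x}_I^{(k)}$ to $\boldsymbol{x}_{\sigma(I)}^{(k)}$, so that expressing any function $g : \qbinomS{V}{k}\to\R$ of degree at most $i$ as a combination of pencils and applying $\sigma$ yields both $\deg(\sigma(g)) = \deg(g)$ and the weight transport rule $\wt^{(i)}_{\sigma(g)}(\sigma(I)) = \wt^{(i)}_g(I)$ for all $I\in\qbinomS{V}{i}$.

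Combining these ingredients, the degree assertion becomes $\deg(\tau(f)) = \deg(\sigma(f^\perp)) = \deg(f^\perp) = \deg(f)$, where the last equality is Theorem~\ref{thm:dual_v_space}. For the weight formula, I would compute $\wt^{(i)}_{\tau(f)}(J) = \wt^{(i)}_{\sigma(f^\perp)}(J) = \wt^{(i)}_{f^\perp}(\sigma^{-1}(J))$ for $J \in \qbinomS{W}{i}$, and then invoke Theorem~\ref{thm:dual_wt_dist} applied to $f^\perp$ at the point $\sigma^{-1}(J)$. To convert the resulting sum into the claimed one, I use that $\sigma$ preserves meets and rank, together with the identity $\sigma(I^\perp) = \tau(I)$ (immediate from $\sigma = \tau \circ \perp^{-1}$ and $\perp^{-1}\circ\perp = \id$), to rewrite
\[
	\rk(I^\perp \cap \sigma^{-1}(J)) = \rk_W(\sigma(I^\perp) \cap J) = \rk_W(\tau(I) \cap J).
\]

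The only genuinely delicate point is making the identification $\tau(f) = \sigma(f^\perp)$ precise under the paper's conventions for induced maps on functions; this is a short bookkeeping exercise that relies on $\perp$ being an involution, the standard case. Once this factorization is fixed, everything else is a direct consequence of the two theorems cited and the structural fact that lattice isomorphisms commute with all the relevant constructions, so I do not expect a significant obstacle.
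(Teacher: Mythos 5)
Your proposal is correct and follows essentially the same route as the paper: the paper likewise factors $\tau = \theta \circ {\perp}$ with $\theta$ a lattice isomorphism, notes that isomorphisms preserve the degree and transport weight distributions, and then invokes Theorems~\ref{thm:dual_v_space} and~\ref{thm:dual_wt_dist}. Your additional bookkeeping (the identity $\sigma(I^\perp)=\tau(I)$ and the rank computation $\rk(I^\perp\cap\sigma^{-1}(J))=\rk_W(\tau(I)\cap J)$) just makes explicit what the paper leaves implicit.
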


\begin{proof}
	As the dualization map ${\perp} : \mathcal{L}(V) \to\mathcal{L}(V)$ is an anti-isomorphism of lattices and any two anti-isomorphisms are related by a lattice isomorphism, there is an automorphism $\theta : \mathcal{L}(W) \to \mathcal{L}(W)$ with $\tau = \theta \circ {\perp}$.
	Since isomorphisms preserve the degree and are compatible with the weight distribution, the claim follows from Theorems~\ref{thm:dual_v_space} and~\ref{thm:dual_wt_dist}.

	Alternatively, one could repeat proof of Theorems~\ref{thm:dual_v_space} and~\ref{thm:dual_wt_dist} and the preceding lemmas with dualization ${\perp} : \mathcal{L}(V) \to\mathcal{L}(V)$ replaced by the anti-isomorphism $\tau : \mathcal{L}(V) \to\mathcal{L}(W)$, using that $\tau(\boldsymbol{x}_U^{(k)}) = \bar{\boldsymbol{x}}_{\tau(U)}^{(n-k)}$ for all $U\in\mathcal{L}(V)$.
\end{proof}

\section{Change of ambient space}\label{sec:weights_degree:ambient}
There are the following two elementary ways to shrink the ambient space $V$.
Modding out a point $P\in\qbinomS{V}{1}$, and restriction to a hyperplane $H\in\qbinomS{V}{n-1}$.
Note that both operations are dual to each other.
The reverse operations give two elementary ways to enlarge the ambient space $V$.
In the following, we will investigate how degree and weights are affected by these operations.
Application of these modifications to (dual) pencils will give basic examples of sets of a given degree $t$ which we will call of type $\mathcal{F}$.

\begin{lemma}\label{lem:weights_pencil}
	Let $k\in\{0,\ldots,n\}$, $P\in\qbinomS{V}{1}$ be a point and let $f : \qbinomS{V}{k} \to \R$ be a non-zero function of degree $t$ such that $f(K) = 0$ for all $K\in\qbinomS{V}{k}$ with $P \nleq K$.

	If $k \leq \frac{n}{2}$ or $t \neq n-k$, then $\wt^{(t)}_f(T) = 0$ for all $T\in\qbinomS{V}{t}$ with $P \nleq K$.
\end{lemma}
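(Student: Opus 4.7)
The plan is to use the unique pencil expansion of $f$ and exploit the support hypothesis to force the weights at $t$-subspaces missing $P$ to vanish, via a restriction-to-hyperplane argument.

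First I would set up the decomposition. Since $\deg(f) \leq t \leq \min(k,n-k)$, Fact~\ref{fact:full_rank} gives the unique representation $f = \sum_{I \in \qbinomS{V}{t}} a_I\, \boldsymbol{x}_I^{(k)}$ with $a_I = \wt_f^{(t)}(I)$. Split this as $f = g + h$, with $g$ ranging over $I$ with $P \leq I$ and $h$ over $I$ with $P \nleq I$. For any $K$ with $P \nleq K$, the pencil $\boldsymbol{x}_I^{(k)}$ can contribute to $g(K)$ only if $I \leq K$, which would force $P \leq K$; hence $g(K) = 0$ for all such $K$. Combining this with the hypothesis gives $h(K) = 0$ for every $K \in \qbinomS{V}{k}$ with $P \nleq K$. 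The remaining goal is to show that this forces $a_I = 0$ for all $I \not\ni P$.

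Next I would isolate the degenerate case. Under the hypothesis \enquote{$k \leq n/2$ or $t \neq n-k$}, the only way $t = n-k$ can occur is together with $k \leq n/2$, which combined with $t \leq k$ forces $n = 2k$ and $t = k$. In that case every $t$-pencil reduces to a Kronecker delta $\boldsymbol{x}_I^{(k)} = \delta_I$, so $h(K) = a_K$ for each $K \not\ni P$, and the conclusion is immediate. In every other case the hypothesis yields $t \neq n-k$, hence $t \leq n-1-k$; together with $t \leq k$ this gives $t \leq \min(k,\, n-1-k)$, the range required to invoke Fact~\ref{fact:full_rank} inside a hyperplane.

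For the main case, fix an arbitrary $I_0 \in \qbinomS{V}{t}$ with $P \nleq I_0$ and pick a hyperplane $H$ with $I_0 \leq H$ and $P \nleq H$. Such an $H$ exists by a short count: the image of $P$ in $V/I_0$ is non-zero (as $P \cap I_0 = \boldsymbol{0}$), and a hyperplane of $V/I_0$ avoiding a given non-zero element always exists (for $q = 1$ take $V \setminus \{P\}$; for $q \geq 2$ the number of such hyperplanes is $q^{n-t-1} \geq 1$). Now restrict $h$ to $\qbinomS{H}{k}$: pencils indexed by $I \nleq H$ contribute zero, while for $I \leq H$ one has $\boldsymbol{x}_I^{(k)}|_{\qbinomS{H}{k}} = \boldsymbol{x}_I^{(k,H)}$, and $P \nleq H$ guarantees all such $I$ satisfy $P \nleq I$. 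Thus
\[
    0 = h|_{\qbinomS{H}{k}} = \sum_{I \in \qbinomS{H}{t}} a_I\, \boldsymbol{x}_I^{(k,H)} \in \bar{H}_t^{(k)}.
\]
By Fact~\ref{fact:full_rank} applied in the ambient lattice $\mathcal{L}(H)$ of rank $n-1$, the family $\mathcal{X}_t^{(k,H)}$ is a basis of $\bar{H}_t^{(k)}$, so all $a_I$ with $I \leq H$ vanish; in particular $a_{I_0} = \wt_f^{(t)}(I_0) = 0$.

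The main obstacles are purely bookkeeping: verifying that the numerical condition \enquote{$k \leq n/2$ or $t \neq n-k$} is exactly what forces either the degenerate case $n = 2k$, $t = k$ (where the pencil expansion is trivial) or the bound $t \leq \min(k,\,n-1-k)$ (where Fact~\ref{fact:full_rank} applies inside $H$), and checking that a hyperplane through $I_0$ avoiding $P$ always exists. Both are short; the structural heart of the argument is the observation that the restriction of a pencil $\boldsymbol{x}_I^{(k)}$ to a hyperplane $H$ with $I \leq H$ is again a pencil of the smaller geometry.
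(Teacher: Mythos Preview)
Your argument is correct and takes a genuinely different route from the paper's proof.

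The paper partitions $\qbinomS{V}{t}$ and $\qbinomS{V}{k}$ according to containment of $P$, writes $(W^{(n;t,k)})^\top$ as a $2\times 2$ block matrix, and then needs that the lower-right block $A^\top$ (the transpose of the $\mathcal{T}$-vs.-$\mathcal{K}$ incidence matrix, where $\mathcal{T}$ and $\mathcal{K}$ consist of the $t$- and $k$-subspaces avoiding $P$) has trivial kernel. For $q = 1$ this block is just $W^{(n-1;t,k)}$ for the ambient set $V\setminus\{P\}$ and Fact~\ref{fact:full_rank} applies directly; for $q\geq 2$ the block is the incidence matrix of an \emph{attenuated space}, and the paper invokes an external rank result of Guo--Li--Wang.

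Your approach sidesteps the attenuated-space machinery entirely: for each target $I_0$ you choose a hyperplane $H$ through $I_0$ avoiding $P$, observe that the restriction of a pencil $\boldsymbol{x}_I^{(k)}$ to $\qbinomS{H}{k}$ is again a pencil of the smaller geometry when $I\leq H$ and vanishes otherwise, and then apply Fact~\ref{fact:full_rank} \emph{inside} $H$. This works uniformly for all $q$ and is more self-contained. The trade-off is that the paper's argument kills all weights at once via one rank statement, whereas yours handles one $I_0$ at a time with a hyperplane depending on $I_0$; but since $I_0$ is arbitrary this is harmless, and the gain in elementarity is real.
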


\begin{proof}
	The assumption on $t$ and $k$ is equivalent to $k = t = \frac{n}{2}$ or $k \leq n-t-1$.
	In the first case, $\wt^{(t)}_f(T) = f(T)$ for all $T\in\qbinomS{V}{t} = \qbinomS{V}{k}$, so the statement is true.

	There remain the cases with $k \leq n-t-1$.
	Let
	\[
		\mathcal{T} = \{T\in\qbinomS{V}{t} \mid P \nleq T\}
		\quad\text{and}\quad
		\mathcal{K} = \{K\in\qbinomS{V}{k} \mid P \nleq K\}
	\]
	be the set of all $t$-subspaces not containing $P$, and of all $k$-subspaces not containing $P$, respectively.
	Then $\#\mathcal{T} = \qbinomS{n}{t} - \qbinomS{n-1}{t-1} = q^t \qbinomS{n-1}{t}$.
	We have that $\phi_{\uparrow}^{(kt)}(\wt_f^{(t)}) = f$.
	The partition $\qbinomS{V}{k} = (\qbinomS{V}{k}\setminus \mathcal{K}) \cup \mathcal{K}$ and $\qbinomS{V}{t} = (\qbinomS{V}{t} \setminus \mathcal{T}) \cup \mathcal{T}$ splits the matrix $(W^{(n;t,k)})^\top$ representing $\phi_{\uparrow}^{(kt)}$ into the block matrix
	\[
		(W^{(n;t,k)})^\top
		= \begin{pmatrix}
			(W^{(n-1;t-1,k-1)})^\top & \ast \\
			0 & A^\top
		\end{pmatrix}\text{,}
	\]
	where $A$ is the $\mathcal{T}$-vs.-$\mathcal{K}$ incidence matrix $A$.
	By the assumption on $f$, we get that $(\wt^{(t)}(T))_{T\in\mathcal{T}}$ is in the (right) kernel of $A^\top$.

	For $q = 1$, $A$ is the matrix $W^{(n-1;t,k)}$ of the ambient space $\mathcal{L}(V \setminus\{P\})$, which by $t\leq k \leq (n-1)-t$ is of full rank $\binom{n-1}{t} = q^t \qbinom{n-1}{t}{1}$.
	For $q \geq 2$, the matrix $A$ is the incidence matrix of an attenuated space.
	It has full rank $q^t \qbinomS{n-1}{t}$ by~\cite[Thm.~5]{Guo-Li-Wang-2014-DM315_316:42-46}, which is the number of rows of $A$.%
	\footnote{Denoting the symbols of~\cite[Thm.~5]{Guo-Li-Wang-2014-DM315_316:42-46} by $n'$, $k'$, $d'$ and $\ell'$, we apply the theorem with $n' = n-1$, $k' = k$, $d' = t$ and $\ell' = 1$.
	As stated in~\cite[Thm.~5]{Guo-Li-Wang-2014-DM315_316:42-46}, there is the requirement $1\leq d' < k' \leq n'-d'$, but clearly it is true also for $d' = 0$ (where $A$ is an all-one row vector) and $d' = k'$ (where $A$ is a unit matrix).
	The requirement $k' \leq n'-d'$ is met by $k \leq n-1-t$.
	}
	Hence the kernel of $A^\top$ is trivial.
\end{proof}

\begin{remark}
Lemma~\ref{lem:weights_pencil} is generally not true for values $n,k,t$ not covered by the assumption, i.e.\ for $k > \frac{n}{2}$ with $t = n-k$.
For a counterexample, fix $K\in\qbinomS{V}{k}$ with $P \leq K$ and consider $f = \chi_{\{K\}}$.
Then $\deg f = \deg f^\perp = \deg \boldsymbol{x}^{(n-k)}_{K^\perp} = n-k$.
It is easily checked that for $K'\in\qbinomS{V}{n-k}$,
\[
	\wt^{(n-k)}_f(K') = \begin{cases}\frac{1}{\qbinomS{k}{n-k}} & \text{if }K' \leq K\text{,}\\0 & \text{if } K' \nleq K\end{cases}
\]
is the weight distribution of $f$.
Because of $k > \frac{n}{2}$ we have $n-k < k$, so there exists a $K'\in\qbinomS{K}{n-k}$ with $P \nleq K'$.
However by the above formula, $\wt^{(n-k)}_f(K') \neq 0$.
\end{remark}

Now we are ready to prove the core results on the two elementary modifications of the ambient space.

\begin{theorem}\label{thm:change_ambient_P}
	Let $n \geq 1$, $P\in\qbinomS{V}{1}$ be a point and $k \geq 1$.
	Then
	\[
		\Phi : \R^{\qbinomS{V/P}{k-1}} \to \R^{\qbinomS{V}{k}}\text{,}\qquad
		\Phi(f) : K \mapsto  \begin{cases} f(K/P) & \text{if }P \leq K\text{,}\\0 & \text{if }P\nleq K\end{cases}
	\]
	is an injective $\R$-linear map with image
	\[
		\im(\Phi) = \{g \in \R^{\qbinomS{V}{k}} \mid \supp g \subseteq {\textstyle\qbinomS{V}{k}}|_P\}
	\]
	and the property that for all $f : \qbinomS{V/P}{k-1} \to \R$
	\[
		\deg_{V}(\Phi(f)) = \min(\deg_{V/P}(f) + 1,\; n-k)\text{.}
	\]
	Moreover, for all $e\in\{\max(\deg_{V/P}(f),0)\,+\,1,\ldots,\min(k,n-k)\}$ and $E\in\qbinomS{V}{e}$ we have
	\[
		\wt^{(e)}_{\Phi(f)}(E)
		= \begin{cases}
			\wt^{(e-1)}_f(E/P) & \text{if }P \leq E\text{,}\\
			0 & \text{if }P \nleq E\text{.}
		\end{cases}
	\]
\end{theorem}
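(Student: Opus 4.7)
The plan is to handle the four claims of the theorem in order of increasing difficulty, reserving the real work for the weight formula and the degree equality. First I would establish the structural properties: linearity of $\Phi$ is clear from the definition, the inclusion $\supp \Phi(f) \subseteq \qbinomS{V}{k}|_P$ holds by construction, and the map $\kappa : \qbinomS{V}{k}|_P \to \qbinomS{V/P}{k-1}$, $K \mapsto K/P$, is a bijection, so $\Phi(f)=0$ forces $f=0$ and every function supported on $\qbinomS{V}{k}|_P$ is of the form $\Phi(f)$; this gives injectivity and the image characterization. A case analysis on whether $P \leq K$ for $K \in \qbinomS{V}{k}$, combined with $\kappa$, also yields the central identity $\Phi(\boldsymbol{x}^{(k-1)}_{T/P}) = \boldsymbol{x}^{(k)}_T$ for every $T \in \mathcal{L}(V)$ with $P \leq T$; this identity will serve as the bridge between the two ambient spaces $V$ and $V/P$.

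For the weight formula, I would fix $e$ in the stated range and define $g : \qbinomS{V}{e} \to \R$ by $g(E) = \wt^{(e-1)}_f(E/P)$ if $P \leq E$ and $g(E) = 0$ otherwise (this is well-defined because $e-1 \geq \deg_{V/P}(f)$ by the range assumption). Then I would verify $\phi_{\uparrow}^{(ke)}(g) = \Phi(f)$ by evaluating both sides at each $K \in \qbinomS{V}{k}$: when $P \not\leq K$ every $E \leq K$ misses $P$ and both sides vanish, while when $P \leq K$ the $e$-subspaces $E$ with $P \leq E \leq K$ biject via $E \mapsto E/P$ with the $(e-1)$-subspaces of $K/P$, so the sum collapses to $\phi_{\uparrow}^{(k-1,e-1)}(\wt^{(e-1)}_f)(K/P) = f(K/P) = \Phi(f)(K)$. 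This already places $\Phi(f)$ in $\bar{V}^{(k)}_e$, and since $e \leq \min(k,n-k)$, Fact~\ref{fact:full_rank} renders $\phi_{\uparrow}^{(ke)}$ injective, so $g$ must coincide with $\wt^{(e)}_{\Phi(f)}$.

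For the degree equality, write $s = \deg_{V/P}(f)$ and $t = \deg_V(\Phi(f))$. The upper bound $t \leq \min(s+1,n-k)$ follows from applying the weight formula at $e = s+1$ whenever $s+1 \leq n-k$, and from the ambient bound $t \leq n-k$ in the remaining case. For the lower bound I would argue by contradiction: assume $t < \min(s+1,n-k)$. Since $\Phi(f)$ is supported on the pencil at $P$ and $t \neq n-k$, Lemma~\ref{lem:weights_pencil} applies and forces $\wt^{(t)}_{\Phi(f)}(T) = 0$ for every $T \in \qbinomS{V}{t}$ with $P \not\leq T$. Expanding $\Phi(f)$ in pencils at level $t$ and substituting $\boldsymbol{x}^{(k)}_T = \Phi(\boldsymbol{x}^{(k-1)}_{T/P})$ via the central identity for each surviving $T \geq P$, one obtains $\Phi(f) = \Phi(h)$ for an explicit $h$ on $\qbinomS{V/P}{k-1}$ of degree at most $t-1$; injectivity of $\Phi$ then yields $f = h$, forcing $\deg_{V/P}(f) \leq t-1$ and contradicting $t \leq s$. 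The main obstacle of the whole argument is precisely this invocation of Lemma~\ref{lem:weights_pencil}, whose hypothesis ($k \leq n/2$ or $t \neq n-k$) is satisfied here exactly because of the strict inequality $t < n-k$; every other step is essentially a bookkeeping exercise based on the bijection $\kappa$ and the injectivity of $\phi_{\uparrow}^{(ke)}$.
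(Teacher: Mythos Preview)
Your proposal is correct and follows essentially the same approach as the paper: verify the weight formula by direct computation via the bijection $K\mapsto K/P$ to get the upper bound on the degree, then invoke Lemma~\ref{lem:weights_pencil} to force the weights at level $t$ to be supported on $\{T : P\leq T\}$ and read off $(t-1)$-weights for $f$, yielding the lower bound. The only cosmetic differences are that the paper handles the zero function $f=\vek{0}$ explicitly (you should too, though it is trivial), and that the paper phrases the lower bound as a direct two-case split ($t\neq n-k$ versus $t=n-k$) rather than as a contradiction argument; your packaging via the identity $\Phi(\boldsymbol{x}^{(k-1)}_{T/P})=\boldsymbol{x}^{(k)}_T$ is a nice way to say the same thing.
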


\begin{proof}
	It is clear that $\Phi$ is an injective $\R$-linear map with the stated image.

	We fix a map $f : \qbinomS{V/P}{k-1}\to\R$ and define $t' = \deg_{V/P}(f)$ and $t = \deg_V(\Phi(f))$.
	For $f = \vek{0}$ we have $\Phi(f) = \vek{0}$ and the claim is easily checked.
	So in the following $f \neq \vek{0}$.
	Then $\Phi(f)$ is not constant and hence $t \geq 1$.
	For $e\in\{t'+1,\ldots,\min(k,n-k)\}$, it is easily checked that the stated formula for $\wt^{(e)}_{\Phi(f)}(E)$ gives valid $e$-weights for $\Phi(f)$, implying that there exist $(t'+1)$-weights for $\Phi(f)$.
	Hence $t \leq t' + 1$.

	If $k \leq \frac{n}{2}$ or $t \neq n-k$, then by Lemma~\ref{lem:weights_pencil}, $\wt^{(t)}_{\Phi(f)}(T) = 0$ for all $T\in\qbinomS{V}{t}$ with $P \nleq T$.
	Based on that, one can check that $f$ complies with the weight formula $\wt^{(t-1)}_{f}(T/P) = \wt^{(t)}_{\Phi(f)}(T)$ for all $T/P\in \qbinomS{V/P}{t-1}$.
	As before, this implies $t' \leq t - 1$.
	Altogether, $t = t' + 1 = \min(t' + 1, n-k)$.

	In the remaining case $k > \frac{n}{2}$ and $t = n-k$, the inequality $t\leq t' + 1$ yields $t' \geq n - k - 1$, which implies $t = \min(t'+1,n-k)$.
\end{proof}

The following is the dualized version of Theorem~\ref{thm:change_ambient_P}.

\begin{theorem}\label{thm:change_ambient_H}
	Let $n \geq 1$, $H\in\qbinomS{V}{n-1}$ be a hyperplane and $k \leq n-1$.
	Then the map
	\[
		\Psi : \R^{\qbinomS{H}{k}} \to \R^{\qbinomS{V}{k}},\qquad
		\Psi(f) : K \mapsto  \begin{cases} f(K) & \text{if }K \leq H\text{,}\\0 & \text{if }K\nleq H\end{cases}
	\]
	is an injective $\R$-linear map with image
	\[
		\im(\Psi) = \{g \in \R^{\qbinomS{V}{k}} \mid \supp g \subseteq {\textstyle\qbinomS{H}{k}}\}
	\]
	and the property that for all $f : \qbinomS{H}{k} \to \R$
	\[
		\deg_{V}(\Psi(f)) = \min(\deg_H(f) + 1,\; k)\text{.}
	\]
	Moreover, for all $e\in\{\max(\deg_{H}(f),0)\,+\,1,\ldots,\min(k,n-k)\}$ and all $E\in\qbinomS{V}{e}$ we have
	\[
		\wt^{(e)}_{\Psi(f)}(E)
		= \begin{cases}
			\displaystyle\frac{1}{\qnumb{k-e+1}{q}} \sum_{E'\in\qbinomS{E}{e-1}} \wt^{(e-1)}_{f}(E') & \text{if }E \leq H\text{,}\\[8mm]
			\displaystyle-\frac{\qnumb{k-e}{q}}{q^{k-e}\qnumb{k-e+1}{q}} \wt^{(e-1)}_f(E \cap H) & \text{if }E \nleq H\text{.}
		\end{cases}
	\]
\end{theorem}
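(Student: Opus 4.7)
My plan is to establish the theorem in three parts. Linearity, injectivity, and the image description are immediate from the definition of $\Psi$, leaving the degree formula and the weight formula as the main work.

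For the degree formula, I would reduce to Theorem~\ref{thm:change_ambient_P} via dualization. Set $P = H^\perp \in \qbinomS{V}{1}$. The anti-isomorphism $\tau : \mathcal{L}(H) \to \mathcal{L}(V/P)$, $U \mapsto U^\perp/P$, from the preliminaries induces an $\R$-linear isomorphism $\tilde\tau : \R^{\qbinomS{H}{k}} \to \R^{\qbinomS{V/P}{n-1-k}}$. The key step is the pointwise identity
\[
\Psi(f)^\perp = \Phi(\tilde\tau(f)),
\]
where $\Phi$ denotes the map of Theorem~\ref{thm:change_ambient_P} applied with $k$ replaced by $n-k$: for $L \in \qbinomS{V}{n-k}$, both sides vanish unless $P \leq L$, and for $P \leq L$ both evaluate to $f(L^\perp)$, using $\tau(L^\perp) = L/P$. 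Combining this identity with Theorem~\ref{thm:dual_v_space}, Theorem~\ref{thm:change_ambient_P}, and Corollary~\ref{cor:anti_isomorphism_deg_wt} gives
\[
\deg_V(\Psi(f)) = \deg_V(\Psi(f)^\perp) = \min(\deg_{V/P}(\tilde\tau(f)) + 1,\, k) = \min(\deg_H(f) + 1,\, k).
\]

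For the weight formula, rather than translating weights through the intricate map of Theorem~\ref{thm:dual_wt_dist}, I find it cleaner to directly verify that the function $g : \qbinomS{V}{e} \to \R$ defined by the stated case distinction satisfies $\phi_\uparrow^{(k,e)}(g) = \Psi(f)$. In the case $K \in \qbinomS{V}{k}$ with $K \leq H$, only the first branch of $g$ contributes; swapping the order of summation and using that every $E' \in \qbinomS{K}{e-1}$ lies in exactly $\qnumb{k-e+1}{q}$ subspaces $E \in \qbinomS{K}{e}$, together with $e - 1 \geq \deg_H(f)$ so that $\phi_\uparrow^{(k,e-1)}(\wt^{(e-1)}_f) = f$, recovers $\Psi(f)(K) = f(K)$. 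In the case $K \nleq H$, write $K' = K \cap H$ (of rank $k-1$); the contribution of the $E \in \qbinomS{K'}{e}$ to the first branch evaluates by the same double counting to $\frac{\qnumb{k-e}{q}}{\qnumb{k-e+1}{q}} \sum_{F \in \qbinomS{K'}{e-1}} \wt^{(e-1)}_f(F)$, while Lemma~\ref{lem:subspace_counting} shows that each $F \in \qbinomS{K'}{e-1}$ arises as $E \cap H$ for exactly $q^{k-e}$ choices of $E \in \qbinomS{K}{e}$ with $E \nleq H$, so the contribution of the second branch is precisely the negative of the first, producing the required total $0 = \Psi(f)(K)$.

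The main obstacle is the careful bookkeeping of the constants in the weight formula: the factors $\frac{1}{\qnumb{k-e+1}{q}}$ and $-\frac{\qnumb{k-e}{q}}{q^{k-e}\qnumb{k-e+1}{q}}$ are precisely what is needed to simultaneously ensure the correct normalization in the case $K \leq H$ and the cancellation in the case $K \nleq H$.
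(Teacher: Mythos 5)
Your proposal is correct and follows essentially the same route as the paper: the degree formula is obtained from the factorization $\Psi = {\perp}\circ\Phi\circ{\operp}$ through $V/P$ with $P = H^\perp$ (your identity $\Psi(f)^\perp = \Phi(\tilde\tau(f))$ is the same statement), and the weight formula is verified by the same direct double counting in the two cases $K \leq H$ and $K \nleq H$. The constants and counts ($\qnumb{k-e+1}{q}$, $\qnumb{k-e}{q}$, $q^{k-e}$) all check out.
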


\begin{proof}
	It is clear that $\Psi$ is an injective $\R$-linear map with the stated image.

	We define the point $P = H^\perp \in \qbinomS{V}{1}$.
	Let $\Phi : \qbinomS{V/P}{n-k-1} \to \qbinomS{V}{n-k}$ be defined as in Theorem~\ref{thm:change_ambient_P} (with $n-k$ in the role of $k$).
	It is straightforward to check that $\Psi = {\perp} \circ \Phi \circ {\operp}$, where $\perp : \R^{\qbinomS{V}{n-k}} \to\R^{\qbinomS{V}{k}}$ denotes the usual dualization and $\operp : \R^{\qbinomS{H}{k}} \to \R^{\qbinomS{V/P}{n-k-1}}$ denotes the bijection induced by the anti-isomorphism $\mathcal{L}(H) \to \mathcal{L}(V/P^\perp)$, $U \mapsto U^\perp / P^\perp$.
	Since the maps ${\perp}$ and ${\operp}$ both preserve the degree by Corollary~\ref{cor:anti_isomorphism_deg_wt}, we have $\deg(\Psi) = \deg(\Phi)$, such that the statement about $\deg(\Psi)$ follows from Theorem~\ref{thm:change_ambient_P}.%
	\footnote{Notice that due to duality $n-k$ is replaced by $k$.}

	Now let $f : \qbinomS{H}{k} \to \R$.
	We determine the function $g\in\R^{\qbinomS{V}{k}}$ which is represented by the weight formula $w : \qbinomS{V}{e} \to \R$ in the statement.
	Let $K\in \qbinomS{V}{k}$.

	We first consider the case $K \leq H$.
	Then for any $E'\in\qbinomS{K}{e-1}$ there are $\qbinomS{k-e+1}{e-(e-1)} = \qnumb{k-e+1}{q}$ subspaces $E\in\qbinomS{K}{e}|_{E'}$.
	We get
	\begin{align*}
		g(K)
		& = \sum_{E\in\qbinomS{K}{e}} w(E)
		= \sum_{E\in\qbinomS{K}{e}} \frac{1}{\qnumb{k-e+1}{q}}\sum_{E'\in\qbinomS{E}{e-1}} \wt^{(e-1)}_f(E') \\
		& = \frac{1}{\qnumb{k-e+1}{q}}\sum_{E'\in\qbinomS{K}{e-1}} \#\qbinomS{K}{e}|_{E'}\,\wt^{(e-1)}_f(E')
		= f(K)\text{.}
	\end{align*}

	In the remaining case $K \nleq H$, the intersection $K' = K \cap H$ is of rank $\rk K' = k-1$.
	Now the $e$-subspaces $E$ of $K$ may either be contained in $H$ (equivalently, in $K'$) or not.
	The set of the former ones is $\qbinomS{K'}{e}$, and each $E'\in\qbinomS{K'}{e-1}$ is contained in $\qbinomS{(k-1)-(e-1)}{e-(e-1)} = \qnumb{k-e}{q}$ subspaces $E$ of the first kind.
	In the latter case, $\rk (E \cap K') = e-1$, and each $E'\in\qbinomS{K'}{e-1}$ arises as $E' = E \cap K'$ for $\qbinomS{k-(e-1)}{e - (e-1)} - \qbinomS{(k-1)-(e-1)}{e - (e-1)} = q^{k-e}$ subspaces $E$ of the second kind.
	So
	\begin{align*}
		g(K)
		& = \sum_{E\in\qbinomS{K'}{e}} w(E) + \sum_{E\in\qbinomS{K}{e}\setminus\qbinomS{K'}{e}} w(E) \\
		& = \frac{1}{\qnumb{k-e+1}{q}}\Big(\sum_{E\in\qbinomS{K'}{e}}\big(\sum_{E'\in\qbinomS{E}{e-1}} \wt^{(e-1)}_f(E')\big) - \sum_{E\in\qbinomS{K}{e}\setminus\qbinomS{K'}{e}} \frac{\qnumb{k-e}{q}}{q^{k-e}}\, \wt^{(e-1)}_f(E \cap H)\Big) \\
		& = \frac{1}{\qnumb{k-e+1}{q}}\Big(\sum_{E'\in\qbinomS{K'}{e-1}}\qnumb{k-e}{q} \wt^{(e-1)}_f(E') - \sum_{E'\in\qbinomS{K'}{e-1}}q^{k-e}\, \frac{\qnumb{k-e}{q}}{q^{k-e}}\wt^{(e-1)}_f(E')\Big) = 0\text{.}
	\end{align*}
	Hence $g = \Psi(f)$ and thus $w = \wt_{\Psi(f)}^{(e)}$.
\end{proof}

\begin{remark}
	In the above proof, the weight distribution of $\Psi$ has been verified by a direct computation.
	Alternatively, we could have proceeded like for the degree using the representation $\Psi = {\perp} \circ \Phi \circ {\operp}$ and applying the weight transformation formula of Corollary~\ref{cor:anti_isomorphism_deg_wt} twice.
\end{remark}

\begin{definition}
	Let $k\in\{0,\ldots,n\}$ and $I \leq J \leq V$.
	We define the \emph{basic set}
	\[
		\mathcal{F}^{(V,k)}(I,J) = \mathcal{F}^{(k)}_{I,J} = \qbinomS{V}{k}|^{J}_I\text{,}
	\]
	which is the set of all $k$-subspaces contained in the interval $[I,J]$ in the lattice $\mathcal{L}(V)$.
	They generalize the pencils and dual pencils, which arise as the characteristic functions of $\mathcal{F}^{(V,k)}(I,J)$ in the case $I = \boldsymbol{0}$ and $J = V$, respectively.
	Its size is $\#\mathcal{F}^{(V,k)}(I,J) = \qbinomS{n-j-i}{k-i}$.
	Any to sets with the same values $i = \rk I$, $j = \cork J$ (and the same $n$ and $k$) are isomorphic.
	They will be called sets of \emph{type} $\mathcal{F}^{(n,k)}_{i,j}$.

	The dual of a set of type $\mathcal{F}^{(n,k)}_{i,j}$ is of type $\mathcal{F}^{(n,n-k)}_{j,i}$, namely $(\mathcal{F}^{(V,k)}_{I,J})^\perp = \mathcal{F}^{(V,n-k)}_{J^\perp,I^\perp}$.
	Generally, the supplement of a basic set is not a basic set, with the exception of $(\mathcal{F}^{(V,k)}_{I,J})^\complement = \mathcal{F}^{(V,k)}_{J^\complement,I^\complement}$ in the case $q = 1$ and $\#I + \#J = 1$.
\end{definition}

\begin{theorem}\label{thm:deg_f}
	Let $k\in\{0,\ldots,n\}$ and $I \leq J \leq V$.
	Let $i = \rk I$ and $j = \cork J$.
	Then
	\[
		\deg \mathcal{F}^{(V,k)}(I,J)
		= \begin{cases}
			-\infty & \text{if } i > k\text{ or } j > n-k\text{,} \\
			\min(i+j,k,n-k) & \text{otherwise.}
		\end{cases}
	\]
\end{theorem}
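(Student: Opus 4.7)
The plan is to dispose of the degenerate case first and then prove the main case by induction on $i+j$, using the two elementary ambient-space modifications from Theorem~\ref{thm:change_ambient_P} and Theorem~\ref{thm:change_ambient_H} to successively reduce either $i$ or $j$. The degenerate case is immediate: if $i>k$ then no $k$-subspace contains $I$, and if $j>n-k$ then $\rk J = n-j < k$ and no $k$-subspace is contained in $J$; in either case $\mathcal{F}^{(V,k)}(I,J)=\emptyset$, so $\chi_{\mathcal{F}^{(V,k)}(I,J)}=\vek{0}$ and the degree is $-\infty$ by convention.

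For the non-degenerate case $i\leq k$ and $j\leq n-k$, the base case $i=j=0$ is trivial, since $\mathcal{F}^{(V,k)}(\vek{0},V)=\qbinomS{V}{k}$ has characteristic function $\vek{1}$ of degree $0=\min(0,k,n-k)$. For the inductive step, I branch on whether $i\geq 1$ or $j\geq 1$. If $i\geq 1$, I fix a point $P\leq I$. Since $P\leq I\leq J$, every block of the basic set contains $P$, and a routine check in the lattice gives $\chi_{\mathcal{F}^{(V,k)}(I,J)}=\Phi\!\left(\chi_{\mathcal{F}^{(V/P,k-1)}(I/P,J/P)}\right)$, where $\Phi$ is the injection from Theorem~\ref{thm:change_ambient_P}. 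The reduced basic set has parameters $(n',k',i',j')=(n-1,k-1,i-1,j)$, so the inductive hypothesis gives its degree as $\min(i+j-1,k-1,n-k)$; applying the formula from Theorem~\ref{thm:change_ambient_P} (add $1$, cap at $n-k$) yields $\min(i+j,k,n-k)$. Dually, if $j\geq 1$, I fix a hyperplane $H$ with $J\leq H$. Every block lies in $H$, and $\chi_{\mathcal{F}^{(V,k)}(I,J)}=\Psi\!\left(\chi_{\mathcal{F}^{(H,k)}(I,J)}\right)$ for the map $\Psi$ of Theorem~\ref{thm:change_ambient_H}. The reduced parameters are $(n-1,k,i,j-1)$ with inductive degree $\min(i+j-1,k,n-k-1)$, and Theorem~\ref{thm:change_ambient_H} (add $1$, cap at $k$) again gives $\min(i+j,k,n-k)$.

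The argument is essentially mechanical; the only things requiring care are the two identifications of characteristic functions under $\Phi$ and $\Psi$ (straightforward from the definitions of the lattice intervals together with $P\leq I$ and $J\leq H$, respectively) and the elementary min-arithmetic, which in both branches reduces to the identity $\min(\min(a-1,b-1,c)+1,c)=\min(a,b,c)$ for suitable $a,b,c$. There is no genuine obstacle: the two ambient-space reduction theorems were set up precisely to make such inductive constructions transparent, and the two branches of the induction correspond exactly to the two ways the quantity $i+j$ can be non-zero.
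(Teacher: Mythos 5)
Your proposal is correct and follows essentially the same route as the paper's own proof: dispose of the empty case, then induct on $i+j$ with base case $i=j=0$, reducing via $\Phi$ (modding out a point $P\leq I$) when $i\geq 1$ and via $\Psi$ (restricting to a hyperplane $H\geq J$) when $j\geq 1$, and finishing with the same $\min$-arithmetic. No discrepancies worth noting.
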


\begin{proof}
	If $i > k$ or $j > n-k$, then $\mathcal{F}^{(k)}(I,J) = \emptyset$ is of degree $-\infty$.
	For the remaining cases, we apply induction over $i + j$.
	If $i = j = 0$, then $\mathcal{F}^{(k)}(I,J) = \qbinomS{V}{k}$ is of degree $0$.
	Let $g : \qbinomS{V}{k}  \to \R$ be the characteristic function of $\mathcal{F}^{(k)}(I,J)$.

	If $i \geq 0$, let $P \in\qbinomS{I}{1}$.
	In the notation of Theorem~\ref{thm:change_ambient_P}, we have $g\in\im(\Phi)$, and $f = \Phi^{-1}(g)\in\R^{\qbinomS{V/P}{k-1}}$ is of type $\mathcal{F}^{(n-1,k-1)}_{i-1,j}$.
	By induction, $\deg_{V/P}(f) = \min(i+j-1,k-1,n-k)$.
	Now by Theorem~\ref{thm:change_ambient_P}
	\begin{multline*}
		\deg\mathcal{F}^{(k)}(I,J)
		= \deg_V \Phi(f)
		= \min(\deg_{V/P}(f) + 1, n-k) \\
		= \min(i+j,k,n-k+1,n-k)
		= \min(i+j,k,n-k)\text{.}
	\end{multline*}

	If $j \geq 0$, let $H \in\qbinomS{J}{n-j-1}$.
	In the notation of Theorem~\ref{thm:change_ambient_H}, we have $g\in\im(\Psi)$, and $f = \Psi^{-1}(g)\in\R^{\qbinomS{H}{k}}$ is of type $\mathcal{F}^{(n-1,k)}_{i,j-1}$.
	By induction, $\deg_H(f) = \min(i+j-1,k,n-k-1)$.
	Now by Theorem~\ref{thm:change_ambient_P}
	\begin{multline*}
		\deg\mathcal{F}^{(k)}(I,J)
		= \deg_V \Psi(f)
		= \min(\deg_H(f) + 1, k) \\
		= \min(i+j,k+1,n-k,k)
		= \min(i+j,k,n-k)\text{.}
	\end{multline*}
\end{proof}

In particular for $t\in\{0,\ldots,\min(k,n-k)\}$, all sets type $\mathcal{F}^{(n,k)}_{i,j}$ with $i + j = t$ are of degree $t$.
We will refer to these as the basic sets or the basic Boolean functions of degree~$t$.

\begin{remark}
	Via Theorem~\ref{thm:alg_geom}, the statement of Theorem~\ref{thm:deg_f} corresponds to the property of the $i$-fold derived and the $j$-fold residual design, see \cite{Kiermaier-Laue-2015-AiMoC9[1]:105-115} for the subspace designs.
\end{remark}

The following corollary summarizes the degrees of the pencils and their duals, which are covered by Theorem~\ref{thm:deg_f} as the special case $I = \boldsymbol{0}$ and $J = V$, respectively.

\begin{corollary}\label{cor:deg_x_xbar}
	Let $k\in\{0,\ldots,n\}$ and $I,J\in\mathcal{L}(V)$ with $i = \rk I$ and $j = \cork J$.
	Then
	\begin{align*}
		\deg \boldsymbol{x}^{(k)}_I
		& = \begin{cases}
		-\infty & \text{if } i > k\text{,} \\
		\min(i,n-k) & \text{if }i \leq k\text{,}
		\end{cases}
		\\
		\text{and}\quad 
		\deg\bar{\boldsymbol{x}}^{(k)}_J
		& = \begin{cases}
		-\infty & \text{if } j > n-k\text{,} \\
		\min(j,k) & \text{if }j \leq n-k\text{.}
		\end{cases}
	\end{align*}
\end{corollary}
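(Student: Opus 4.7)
The plan is to derive Corollary~\ref{cor:deg_x_xbar} as an immediate specialization of Theorem~\ref{thm:deg_f}, by observing that pencils and dual pencils are the two extremal cases of basic sets.

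First, I would note that by the definition of the basic set, $\boldsymbol{x}^{(k)}_I = \chi_{\mathcal{F}^{(V,k)}(I, V)}$, corresponding to the interval $[I, V]$. Here the upper bound is $J = V$, so $j = \cork V = 0$. Applying Theorem~\ref{thm:deg_f}, the degenerate case $i > k$ or $j > n - k$ reduces simply to $i > k$, since $0 > n - k$ is impossible in the range $k \in \{0,\ldots,n\}$. Otherwise, the theorem yields $\deg \mathcal{F}^{(V,k)}(I, V) = \min(i + 0, k, n - k) = \min(i, k, n - k)$, which under the assumption $i \leq k$ simplifies to $\min(i, n - k)$, matching the stated formula.

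Dually, $\bar{\boldsymbol{x}}^{(k)}_J = \chi_{\mathcal{F}^{(V,k)}(\boldsymbol{0}, J)}$, where the lower bound is $I = \boldsymbol{0}$, so $i = \rk \boldsymbol{0} = 0$. Applying Theorem~\ref{thm:deg_f} again, the degenerate case now reduces to $j > n - k$ alone, since $0 > k$ is impossible. Otherwise, the theorem yields $\deg \mathcal{F}^{(V,k)}(\boldsymbol{0}, J) = \min(0 + j, k, n - k) = \min(j, k, n - k) = \min(j, k)$, using $j \leq n - k$.

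Since both cases fall out mechanically from Theorem~\ref{thm:deg_f} with no nontrivial step required, there is no real obstacle to the proof; the main task is just to verify that the two extreme values $J = V$ and $I = \boldsymbol{0}$ give rise to the pencil and dual pencil, respectively, and to correctly simplify the $\min$-expression under the non-degenerate hypotheses.
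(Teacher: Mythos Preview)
Your proposal is correct and matches the paper's approach exactly: the paper states just before the corollary that the result is covered by Theorem~\ref{thm:deg_f} as the special cases $J = V$ (for $\boldsymbol{x}^{(k)}_I$) and $I = \boldsymbol{0}$ (for $\bar{\boldsymbol{x}}^{(k)}_J$), and your simplification of the $\min$-expressions under the respective hypotheses $i \leq k$ and $j \leq n-k$ is precisely what is needed.
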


Now we can give the missing proof of Lemma~\ref{lem:deg_operations}\ref{lem:deg_operations:prod}.
\begin{proof}[Lemma~\ref{lem:deg_operations}\ref{lem:deg_operations:prod}]
	We may assume that both $f$ and $g$ are non-zero.
	With $i = \deg f$ and $j = \deg g$, there are weights $a_I, b_J\in\R$ such that $f = \sum_{I\in\qbinomS{V}{i}} a_I \boldsymbol{x}^{(k)}_I$ and $g = \sum_{J\in\qbinomS{V}{j}} b_J \boldsymbol{x}^{(k)}_J$.
	Then
	\[
		fg = \Big(\sum_{I\in\qbinomS{V}{i}} a_I \boldsymbol{x}^{(k)}_I\Big)\Big(\sum_{J\in\qbinomS{V}{j}} b_J \boldsymbol{x}^{(k)}_J\Big)
		= \sum_{I\in\qbinomS{V}{i}}\sum_{J\in\qbinomS{V}{j}} a_I b_J \boldsymbol{x}^{(k)}_I \boldsymbol{x}^{(k)}_J
		= \sum_{I\in\qbinomS{V}{i}}\sum_{J\in\qbinomS{V}{j}} a_I b_J \boldsymbol{x}^{(k)}_{I + J}\text{.}
	\]
	Therefore, the degree is upper bounded by the values $\deg\boldsymbol{x}^{(k)}_{I + J} \leq \rk(I + J) \leq i + j$, where we used the degree formula in Corollary~\ref{cor:deg_x_xbar}.
\end{proof}

The following definition aligns with established concepts in design theory (see \cite{Kiermaier-Laue-2015-AiMoC9[1]:105-115} for subspace designs).

\begin{definition}\label{def:der_res}
	Let $n \geq 1$ and $f : \qbinomS{V}{k}\to \R$.
	For a point $P\in\qbinomS{V}{1}$ and $k \geq 1$, the \emph{derived} function of $f$ in $P$ is
	\[
		\Der_P(f) : \qbinomS{V/P}{k-1} \to \R,\quad K/P \mapsto f(K)\text{.}
	\]
	For a hyperplane $H\in\qbinomS{V}{k-1}$ and $n-k \geq 1$, the \emph{residual} function of $f$ in $H$ is
	\[
		\Res_H(f) : \qbinomS{H}{k} \to \R, \quad K \mapsto f(K)\text{.}
	\]
\end{definition}

\begin{corollary}\label{cor:der_res_deg}
	In the situation of Definition~\ref{def:der_res},
	\[
		\deg_{V/P}(\Der_P(f)) \leq \deg_V(f)
		\quad\text{and}\quad
		\deg_H(\Res_H(f)) \leq \deg_V(f)\text{.}
	\]
\end{corollary}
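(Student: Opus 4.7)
The plan is to exploit the linear structure of the degree filtration together with the pencil basis of $\bar{V}^{(k)}_t$. Setting $t = \deg_V(f)$, we have $f \in \bar{V}^{(k)}_t$ and therefore can expand $f = \sum_{I \in \qbinomS{V}{t}} a_I \boldsymbol{x}_I^{(k)}$ as an $\R$-linear combination of $t$-pencils. Both $\Der_P$ and $\Res_H$ are visibly $\R$-linear maps, so by Lemma~\ref{lem:deg_operations}\ref{lem:deg_operations:sum} it suffices to bound the degree of each $\Der_P(\boldsymbol{x}_I^{(k)})$ in $V/P$, respectively each $\Res_H(\boldsymbol{x}_I^{(k)})$ in $H$, by~$t$.

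For the derived case, I would identify $\Der_P(\boldsymbol{x}_I^{(k)})$ explicitly. Evaluated at $K/P$ with $P \leq K$, it equals $1$ exactly when $I \leq K$, which by adding $P$ on both sides is equivalent to $(I+P)/P \leq K/P$. Hence $\Der_P(\boldsymbol{x}_I^{(k)}) = \boldsymbol{x}_{(I+P)/P}^{(k-1)}$ as functions on $\qbinomS{V/P}{k-1}$. Since $\rk_{V/P}((I+P)/P) \leq \rk_V I = t$ (the rank either drops by one when $P \leq I$ or stays the same when $P \nleq I$), Corollary~\ref{cor:deg_x_xbar} applied in the ambient space $V/P$ yields $\deg_{V/P} \Der_P(\boldsymbol{x}_I^{(k)}) \leq t$.

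For the residual case, the analysis is even simpler. If $I \nleq H$, then no $K \leq H$ can contain $I$, so $\Res_H(\boldsymbol{x}_I^{(k)}) = \vek{0}$; and if $I \leq H$, then $\Res_H(\boldsymbol{x}_I^{(k)})$ is precisely the pencil $\boldsymbol{x}_I^{(k)}$ computed in the ambient lattice $\mathcal{L}(H)$, still based on a subspace of rank $t$. In either case, Corollary~\ref{cor:deg_x_xbar} applied in $H$ gives a degree bound of $t$.

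There is no hard step here; the entire argument is driven by the naturality of the pencil construction under the two elementary modifications of the ambient space. The only small care required is to handle the cases $P \leq I$ and $P \nleq I$ uniformly in the derived argument, which the expression $(I+P)/P$ does automatically, and to note that both target lattices $\mathcal{L}(V/P)$ and $\mathcal{L}(H)$ have rank $n-1$, so that the degree formula of Corollary~\ref{cor:deg_x_xbar} applies cleanly in each target.
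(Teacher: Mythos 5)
Your proof is correct, but it takes a genuinely different route from the paper. The paper's proof is a two-line reduction: it writes $\Der_P(f) = \Phi^{-1}(\boldsymbol{x}_P \cdot f)$ and $\Res_H(f) = \Psi^{-1}(\bar{\boldsymbol{x}}_H \cdot f)$, bounds $\deg(\boldsymbol{x}_P\cdot f)$ and $\deg(\bar{\boldsymbol{x}}_H\cdot f)$ by $\deg_V(f)+1$ via the product rule of Lemma~\ref{lem:deg_operations}\ref{lem:deg_operations:prod}, and then invokes the exact degree formulas for $\Phi$ and $\Psi$ from Theorems~\ref{thm:change_ambient_P} and~\ref{thm:change_ambient_H} to strip off the added $1$. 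You instead expand $f$ in the $t$-pencil basis with $t=\deg_V(f)$, use linearity of $\Der_P$ and $\Res_H$ together with the (finitely iterated) subadditivity of Lemma~\ref{lem:deg_operations}\ref{lem:deg_operations:sum}, and compute the image of each pencil explicitly: $\Der_P(\boldsymbol{x}_I^{(k)}) = \boldsymbol{x}_{(I+P)/P}^{(k-1)}$ and $\Res_H(\boldsymbol{x}_I^{(k)})$ is either $\vek{0}$ or the pencil $\boldsymbol{x}_I^{(k)}$ in $\mathcal{L}(H)$, in all cases a pencil on a subspace of rank at most $t$, so Corollary~\ref{cor:deg_x_xbar} (applied in the rank-$(n-1)$ lattices $\mathcal{L}(V/P)$ and $\mathcal{L}(H)$, and covering the degenerate cases via $\deg\vek{0}=-\infty$) gives the bound. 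Your approach is more self-contained — it bypasses the change-of-ambient-space theorems entirely and only needs the elementary fact that a pencil on a rank-$i$ subspace has degree at most $i$ — and as a by-product it exhibits an explicit degree-$t$ pencil representation of the derived and residual functions. The paper's approach is shorter given the machinery already in place and dovetails with the weight-distribution formulas of Theorems~\ref{thm:change_ambient_P} and~\ref{thm:change_ambient_H}. Since Corollary~\ref{cor:deg_x_xbar} is established before this point, your argument introduces no circularity.
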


\begin{proof}
	We have $\Der_P(f) = \Phi^{-1}(\vek{x}_P \cdot f)$ and $\Res_H(f) = \Psi^{-1}(\bar{\vek{x}}_H \cdot f)$.
	Now apply Lemma~\ref{lem:deg_operations}\ref{lem:deg_operations:prod} and Theorem~\ref{thm:change_ambient_P} and~\ref{thm:change_ambient_H}, resp.
\end{proof}

Corollary~\ref{cor:der_res_deg} is essentially~\cite[Lem.~6(b),(c)]{DeBeule-DHaeseleer-Ihringer-Mannaert-2023-ElecJComb30[1]:P1.31}, see also~\cite[Th.~4.1]{DeBeule-Mannaert-Storme-2024-DCC-prelim} and~\cite[Th.~3.1]{DeBeule-Mannaert-Storme-2022-DCC90[3]:633-651}.

\section{An example}\label{sec:example}
We investigate the Johnson scheme $J(6,3)$, meaning that we look at the case $q=1$, $V = \{1,2,3,4,5,6\}$, $n=6$ and $k=3$.
The chain of subspaces from Lemma~\ref{lem:vi_chain} is
\[
	\langle\vek{1}\rangle = \bar{V}_0 \subseteq \bar{V}_1 \subseteq \bar{V}_2 \subseteq \bar{V}_3 = \R^{\binom{V}{3}}
\]
of dimensions
\begin{multline*}
	\dim\bar{V}_0 = \binom{6}{0} = 1\text{,}\quad
	\dim\bar{V}_1 = \binom{6}{1} = 6\text{,}\quad \\
	\dim\bar{V}_2 = \binom{6}{2} = 15\quad\text{and}\quad
	\dim\bar{V}_3 = \binom{6}{3} = 20\text{.}
\end{multline*}
The corresponding orthogonal decomposition of $\R^{\binom{V}{3}}$ as discussed in Remark~\ref{rem:vi_vs_johnson} is
\[
	\R^{\binom{V}{3}} = V_0 \perp V_1 \perp V_2 \perp V_3
\]
with dimensions
\[
	 \dim V_0= 1\text{,}\quad
	 \dim V_1= 5\text{,}\quad
	 \dim V_2= 9\quad\text{and}\quad
	 \dim V_3=5\text{.}
\]
In the following, we derive bases of the spaces $V_0 \perp V_i$ with $i\in\{1,2,3\}$ with a rich structure.
Together with $V_0 = \langle\vek{1}\rangle$, this gives a good understanding of the eigenspaces $V_i$.

For the space $V_0 \perp V_1 = \bar{V}_1$ of dimension $6$, a natural basis is given by the set of the $6$ point pencils.

For the other two cases, we consider the set
\[
	\mathcal{F} = \{123,456\}\in\binom{V}{3}\text{.}
\]
By the classification result stated in Remark~\ref{rem:m_q} (or by a short direct computation), $\mathcal{F}$ is not of degree $1$.
It is straightforward to check that $\mathcal{F}$ has the $2$-weights (where $T \in\binom{V}{2}$)
\[
	\wt_{\mathcal{F}}^{(2)}(T)
	= \begin{cases}
		\frac{1}{3} & \text{if }T \subseteq \{1,2,3\} \text{ or }T \subseteq \{4,5,6\}\text{,} \\
		-\frac{1}{6} & \text{otherwise.}
	\end{cases}
\]
Hence $\deg \mathcal{F} = 2$ and therefore $\chi_{\mathcal{F}} \in \bar{V}_2 = V_0 \perp V_1 \perp V_2$.

As a partition of $V$ into parts of size~$3$, the set $\mathcal{F}$ is also a $1$-$(6,3,1)$ design and hence $\chi_{\mathcal{F}} \in U_{t,{\ast}} = \bar{V}_1^\perp + \langle\vek{1}\rangle = V_0 \perp V_2 \perp V_3$.
Altogether,
\[
	\chi_{\mathcal{F}} \in (V_0 \perp V_1 \perp V_2) \cap (V_0 \perp V_2\perp V_3) = V_0 \perp V_2\text{.}
\]
Clearly, the same is true for all $\frac{1}{2}\binom{6}{3} = 10$ isomorphic copies of $\mathcal{F}$.
Indeed, the resulting characteristic functions turn out to be linearly independent and hence span the full space $V_0 \perp V_2$ of dimension $1 + 9 = 10$.

The geometric property of $\mathcal{F}$ yields for each real $2$-$(6,3,\lambda)$ design $\delta$ that%
\footnote{We have $\lambda_{\max} = \binom{n-t}{k-t} = \binom{4}{1} = 4$.}
\[
	\langle \chi_{\mathcal{F}}, \delta\rangle
	= \frac{\lambda}{\lambda_{\max}} \cdot \#\mathcal{F}
	= \frac{\lambda}{2}\text{.}
\]
Hence, each simple $2$-$(6,3,2)$ design contains exactly one of the blocks $\{1,2,3\},\{4,5,6\}$, and by the same argument applied to the isomorphic copies of $\mathcal{F}$, any simple $2$-$(6,3,2)$ design is \emph{anti-complementary}, meaning that it contains exactly one element from each complementary pair of $3$-subsets of $V$.
We mention that this property can also be derived using the Mendelsohn equations on the intersection numbers from design theory, see for example \cite{Kiermaier-Pavcevic-2015-JCD23[11]:463-480} for an overview.

It is easily checked that
\[
	D = \{123,124,136,145,156,246,256,235,345,346\}
\]
is an example of a $2$-$(6,3,2)$ design, and indeed it is anti-complementary.
Hence
\[
	\chi_D \in U_{2,{\ast}} = \bar{V}_2^\perp + \langle \vek{1}\rangle = V_0 \perp V_3\text{.}
\]
The automorphism group of $D$ in the symmetric group $S_V$ is isomorphic to the alternating group $A_5$.
Hence there are $\#S_V / \#A_5 = 12$ isomorphic copies of $D$, falling into $6$ supplementary pairs.
It turns out that each set of $6$ representatives of these pairs is a basis of the full space $V_0 \perp V_3$ of dimension $1 + 5 = 6$.
The set of representatives where each design contains the block $\{1,2,3\}$ is given by
\begin{align*}
     \{123, 124, 135, 146, 156, 236, 245, 256, 345, 346\}\text{,} \\
     \{123, 124, 136, 145, 156, 235, 246, 256, 345, 346\}\text{,} \\
     \{123, 125, 134, 146, 156, 236, 245, 246, 345, 356\}\text{,} \\
     \{123, 125, 136, 145, 146, 234, 246, 256, 345, 356\}\text{,} \\
     \{123, 126, 134, 145, 156, 235, 245, 246, 346, 356\}\text{,} \\
     \{123, 126, 135, 145, 146, 234, 245, 256, 346, 356\}\text{.}
\end{align*}

\section*{Acknowledgements}
The research of the authors was supported by the Research Network Coding Theory and Cryptography (W0.010.17N) of the Research Foundation – Flanders (FWO).

We would like to thank Lukas Klawuhn for making us aware of the article~\cite{Roos-1982-DelftProgrRep7[2]:98-109}.
We are grateful for the anonymous referee comments which helped to improve the paper.

\emergencystretch=1em
\printbibliography

\end{document}